\documentclass[11pt]{amsart}

\usepackage[all]{xy}
\usepackage{amsmath}
\usepackage{amsfonts,mathrsfs,bbding}
\usepackage{amssymb}
\usepackage{amscd}
\usepackage{amsthm}
\usepackage[titletoc]{appendix}
\usepackage{latexsym}
\usepackage{graphicx}
\usepackage{epstopdf}
\usepackage{amsrefs}
\usepackage{mathscinet}
\usepackage{epsfig}
\usepackage{amsthm}
\usepackage{latexsym}
\usepackage[latin1]{inputenc}
\usepackage{verbatim}
\usepackage{pb-diagram}
\usepackage[all]{xy}
\usepackage{lipsum}
\pagestyle{plain}

\epstopdfDeclareGraphicsRule{.gif}{png}{.png}{%
\usepackage{fancyhdr}
convert gif:#1 png:\OutputFile}
\AppendGraphicsExtensions{.gif}

\theoremstyle{definition}
\newtheorem{definition}{Definition}[section]

\newtheorem{example}{Example}[section]

\newtheorem{remark}{Remark}[section]
\newtheorem{remarks}{Remarks}[section]
\newtheorem{observation}{Observation}[section]

\theoremstyle{plain}
\newtheorem{theorem}{Theorem}[section]
\newtheorem{corollary}{Corollary}[theorem]
\newtheorem{lemma}[theorem]{Lemma}
\newtheorem{proposition}[theorem]{Proposition}
\def\limind{\mathop{\oalign{lim\cr\hidewidth$\longrightarrow$\hidewidth\cr}}} 
\def\limproj{\mathop{\oalign{lim\cr\hidewidth$\longleftarrow$\hidewidth\cr}}} 

\newcommand{\compcent}[1]{\vcenter{\hbox{$#1\circ$}}}
\newcommand{\ip}[2]{\left\langle #1 , #2 \right\rangle}    
\newcommand{\comp}{\mathbin{\mathchoice
{\compcent\scriptstyle}{\compcent\scriptstyle}
{\compcent\scriptscriptstyle}{\compcent\scriptscriptstyle}}} 

 
\newcommand{\op}{\operatorname }
\newcommand{\cali}{\mathscr }

\newcommand{\m}{\mathcal }
\newcommand{\ba}{\overline}



\title{$C^*$-subproduct and product systems}

\author{Remus Floricel}
\address{University of Regina, Department of Mathematics, Regina, SK, Canada}
\email{Remus.Floricel@uregina.ca}
\author{Brian Ketelboeter} 
\address{University of Regina, Department of Mathematics, Regina, SK, Canada}
\email{ketelbob@uregina.ca}
 \subjclass[2010]
{Primary 46L05, 46L06; Secondary
46L53, 46L55}
\keywords{product system, subproduct system, dilation, 
idempotent states}
\date{\today}
\begin{document}

\maketitle
\begin{abstract}
We introduce and study two-parameter subproduct and product systems of $C^*$-algebras as the
operator-algebraic analogues of, and in relation to, Tsirelson's two-parameter product systems of Hilbert spaces. Using several inductive limit techniques, we show that (i) any $C^*$-subproduct system can be dilated to a $C^*$-product system; and (ii) any $C^*$-subproduct system that admis a unit, i.e., a co-multiplicative family of projections, can be assembled into a $C^*$-algebra, which comes equipped with a one-parameter family of comultiplication-like homomorphisms. We also introduce and discuss co-units of $C^*$-subproduct systems, consisting of co-multiplicative families of states, and show that they correspond to idempotent states of the associated $C^*$-algebras. We then use the GNS construction to obtain Tsirelson subproduct  systems of Hilbert spaces from co-units, and describe the relationship between the dilation of a $C^*$-suproduct system and the dilation of the Tsirelson subproduct system of Hilbert spaces associated with a co-unit. All these results are illustrated concretely at the level of $C^*$-subproduct systems of commutative $C^*$-algebras.
\end{abstract}

\section{Introduction}
Our primary goal, in this paper, is to initiate the study of two-parameter subproduct and product systems of $C^*$-algebras, henceforth referred to as $C^*$-subproduct and product systems, building on the following: (i) Arveson's theory of one-parameter product systems of Hilbert spaces \cite{Arveson89, Arveson97, Arveson-book}, including the work of B.V.R. Bhat and M. Mukherjee \cite{Bhat-M}, and that of O. Shalit and B. Solel \cite{Shalit-Solel}, on subproduct systems of Hilbert spaces, as well as  the work of B.V.R. Bhat and M. Skeide on product systems of Hilbert modules \cite{Bhat-Skeide, Ske}; and (ii)
Tsirelson's theory of two-parameter product systems of Hilbert spaces \cite{Tsi03, Tsi04} (see also \cite{Liebscher}).

One-parameter product systems of Hilbert spaces, commonly known as product systems, or Arveson product systems, are main objects of study in the theory of noncommutative dynamics \cite{Arveson-book}. They were introduced by W. Arveson in \cite{Arveson89} as an effective tool for classifying $E_0$-semigroups up to cocyle conjugacy, thus opening a new direction of research, complementary to that initiated by R.T. Powers in \cite{Powers88} (see also \cite{Pow99, Pow03}). At algebraic level, Arveson product systems of Hilbert spaces can simply be defined as pairs $\left(\{E_{t}\}_{0<t}, \{V_{s,t}\}_{0<s,\, t}\right)$ consisting of a family $\{E_t\}_{t>0}$ of Hilbert spaces $E_t$ and a family $\{V_{s,t}\}_{s,t>0}$ of unitary operators $V_{s,t}:E_{s+t}\to E_s\otimes E_t$ that satisfy the co-associativity law \begin{eqnarray}\label{Feb6}(1_{E_r}\otimes V_{s,t})V_{r,s+t}=(V_{r,s}\otimes 1_{E_t})V_{r+s,t},\end{eqnarray} for all $r,\,s,\,t>0$. We emphasize, however, that Arveson's original definition of a product system also requires that all Hilbert spaces $E_t$ be separable and that the bundle $\{E_t\}_{t>0}$ be endowed with a certain Borel measurable structure that makes all operations measurable, in which case the system is referred to as a ``measurable Arveson system of Hilbert spaces''.  Nevertheless, since the measurable structure can be dropped whenever it comes to the subject of classifying product systems up to isomorphism  \cite{Liebscher}, which incidentally is one of the motivating reasons of this work, we do not impose these additional conditions on any of the systems considered in this article.

The study of Arveson product systems can be enriched by taking into account two-parameter product systems of Hilbert spaces. Such systems were introduced by B. Tsirelson in \cite{Tsi03} with the primary purpose of constructing a continuum of mutually non-isomorphic Arveson product systems, a task accomplished by the use of a wide range of probabilistic techniques (see also \cite{Tsi04, Liebscher}). In our paper, two-parameter product systems are referred to as ``Tsirelson product systems of Hilbert spaces", and defined as pairs $\left(\{H_{s,t}\}_{0<s<t}, \{U_{r,s,t}\}_{0<r<s<t}\right)$ consisting of a family  $\{H_{s,t}\}_{0<s<t}$ of Hilbert spaces $H_{s,t}$ and a family $\{U_{r,s,t}\}_{0<r<s<t}$ 
 of unitary operators $U_{r,s,t}: H_{r,t} \to H_{r,s}\otimes H_{s,t}$ that satisfy the co-associativity law 
 \begin{eqnarray}\label{Jan24cc}
 \left(1_{H_{r,s}}\otimes U_{s,t,u}\right)U_{r,s,u}=\left(U_{r,s,t}\otimes 1_{H_{t,u}}  \right)U_{r,t,u},
 \end{eqnarray} 
for all positive real numbers $0<r < s < t < u$. Strictly speaking, the notion of Tsirelson product system of Hilbert spaces defined above corresponds to Tsirelson's notion of local continuous product of Hilbert spaces (local CP HS) in \cite{Tsi03}, obtained by excluding unbounded intervals form the definition of a CP HS.

The concept of subproduct system also plays a central role in the theory of noncommutative dynamics. Its introduction was mainly motivated by the study of quantum dynamical semigroups, also known as $CP$-semigroups \cite{Arveson-book}. It was shown by B.V.R. Bhat in \cite{Bhat} (see also \cite{Bhat2, Arveson-book}) that any unital quantum dynamical semigroup can be dilated to an $E_0$-semigroup. This procedure, known as the Bhat dilation of a quantum dynamical semigroup, can be regarded as an operator algebraic counterpart of the classical  Kolmogorov extension theorem. As in the case of $E_0$-semigroups, W. Arveson showed in \cite{Arveson97} (see also \cite{Markiewicz}) that quantum dynamical semigroups can also be described in terms of a certain one-parameter family of Hilbert spaces and an associated family of isometric operators. This, together with other important developments (see \cite{Shalit-Solel,Bhat-M, Shalit-Skeide} for a comprehensive description), led to the emergence of the formal concept of subproduct system, which was concurrently introduced by O. Shalit and B. Solel in \cite{Shalit-Solel} under the name ``subproduct system", and by B.V.R. Bhat and M. Mukherjee in \cite{Bhat-M} under the name ``inclusion system". Although we do not intend to address this concept in detail in this article, in order to distinguish it from other similar notions, we simply call it ``Arveson subproduct system of Hilbert spaces", and by this we mean a pair $\left(\{E_{t}\}_{0<t}, \{V_{s,t}\}_{0<s,\, t}\right)$ consisting of a family $\{E_t\}_{t>0}$ of Hilbert spaces $E_t$ and a family $\{V_{s,t}\}_{s,t>0}$ of isometric operators $V_{s,t}:E_{s+t}\to E_s\otimes E_t$ that satisfy the co-associativity law (\ref{Feb6}).

Closely related to all the concepts discussed so far is the concept of ``Tsirelson subproduct system of Hilbert spaces", which is defined as a pair $\left(\{H_{s,t}\}_{0<s<t}, \{U_{r,s,t}\}_{0<r<s<t}\right)$, consisting of a family of Hilbert spaces $\{H_{s,t}\}_{0<s<t}$ and a family $\{U_{r,s,t}\}_{0<r<s<t}$ of isometric operators $U_{r,s,t}: H_{r,t} \to H_{r,s}\otimes H_{s,t}$ that satisfy the co-associativity law (\ref{Jan24cc}). While Tsirelson subproduct systems of Hilbert spaces do not appear to have been properly considered and studied in the past, with the notable exception of \cite{Gurevich} in which they were discussed in a discrete setting, they can certainly have a strong impact in the study of Tsirelson product systems in general, and Arveson product systems in particular. As a matter of fact, any Tsirelson subproduct system of Hilbert spaces can be transformed into a Tsirelson product system of Hilbert spaces through a Bhat-Mukherjee type dilation procedure (see Remark \ref{monster}).
 
We aim, in this article, to complement and enrich the study of the systems of Hilbert spaces discussed above by introducing their $C^*$-algebraic versions, namely subproduct and product systems of $C^*$-algebras. We focus primarily on two-parameter subproduct and product systems of $C^*$-algebras, which are conceptually analogous to Tsirelson subproduct and product systems of Hilbert spaces, noting that any one-parameter subproduct or product system of $C^*$-algebras can be easily transformed into a two-parameter system (see Remark \ref{Feb21-22}). 

Our current investigation has been strongly motivated by the natural occurrence of two-parameter $C^*$-subproduct and product systems in the study of Arveson subproduct and product systems of Hilbert spaces, as highlighted in Examples \ref{exa1} and \ref{remus} of this article, as well as by their significant connection to the theory of $C^*$-bialgebras. We also note that some fundamental results from the theory of Arveson subproduct and product systems of Hilbert spaces have been generalized to the level of tensor categories in \cite{GLS} by using full comonoidal system over cancellative monoids as a direct equivalent, but these generalizations do not cover the case of two-parameter $C^*$-subproduct and product systems discussed in this article. 

This reminder of this paper is structured as follows. In Section \ref{3b100}, we introduce the main concepts of our study, namely the general class of tensorial $C^*$-systems (see Definition \ref{3b1}), and its subclasses of $C^*$-subproduct systems and $C^*$-product systems (see Definition \ref{May12022}), as well as the notion of unit of a tensorial $C^*$-system. A variety of examples are also discussed, ranging from $C^*$-subproduct systems of matrix algebras (Example \ref{exa1}) and tensorial $C^*$-systems of commutative $C^*$-algebras (Example \ref{examp1}) to tensorial $C^*$-systems of reduced group $C^*$-algebras (Example \ref{cucuu}) and $C^*$-product systems obtained from measurable Arveson product systems of Hilbert spaces (Example \ref{remus}). 

In Section \ref{ch3.1}, we exploit B.V.R Bhat and M. Mukherjee's inductive limit technique from \cite{Bhat-M}, which in turn is based on \cite[Section 4]{Bhat-Skeide}, and extend it to the two-parameter $C^*$-algebraic setting by showing, in Theorem \ref{star-isomorphism theorem}, that any $C^*$-subproduct system $\cali{A}=\left(\{\m{A}_{s,t}\}_{0<s<t}, \{\Delta_{r,s,t}\}_{0<r<s<t}\right)$ can be dilated to a $C^*$-product system $\cali{A}^\sharp=\left(\{\m{A}_{s,t}^\sharp\}_{0<s<t},\,\{\Delta_{r,s,t}^\sharp\}_{0<r<s<t}\right)$, called the inductive dilation of $\cali{A}$. Each $C^*$-algebra $\m{A}_{s,t}^\sharp$ of the system $\cali{A}^\sharp$ is constructed as the inductive limit of the inductive family of tensor products $\m{A}_I=\m{A}_{\iota_0, \iota_1}\otimes \m{A}_{\iota_1,\iota_2}\otimes \dots\otimes \m{A}_{\iota_m,\iota_{m+1}}$, indexed over the partially ordered set $\cali{P}_{s,t}$ of all finite partitions $I=\{s=\iota_0<\iota_1<\iota_2<\,\dots<\iota_m<\iota_{m+1}=t\}$
of the interval $[s,t]$, with embeddings obtained from the co-multiplication $ \{\Delta_{r,s,t}\}_{0<r<s<t}$ through certain tensorizations. 

Section \ref{sec4.1} is devoted to the construction of the $C^*$-algebra $C^*_{\{ p_{s,t}\}}(\cali{A})$ of a $C^*$-subproduct system $\cali{A}=\left(\{\m{A}_{s,t}\}_{0<s<t}, \{\Delta_{r,s,t}\}_{0<r<s<t}\right)$ with respect to a unit $\{p_{s,t}\}_{0<s<t}$. This construction is also carried out by an inductive limit procedure taking into account the  inductive system of $C^*$-algebras $\m{A}_I$, defined as in Section \ref{ch3.1}, but indexed this time over the partially ordered set of all finite ordered subsets $I$ of the interval $(0,\infty)$ (see Proposition \ref{lemma inductive limit}). 

In Section \ref{sec5+.1}, we show that the $C^*$-algebra $C^*_{\{ p_{s,t}\}}(\cali{A})$ admits a one-parameter family of *-homomorphisms $\Delta_s: C^*_{\{ p_{s,t}\}}(\cali{A})\to C^*_{\{ p_{s,t}\}}(\cali{A})\otimes C^*_{\{ p_{s,t}\}}(\cali{A})$, $s>0$, which satisfies the deformed co-associativity law $(\Delta_r\otimes\op{id})\Delta_s=(\op{id}\otimes\Delta_s)\Delta_r,$ for all $0<r<s$.
The family $\{\Delta_s\}_{s>0}$ is constructed in two steps. First of all, we show, by adapting to our setting some techniques from \cite[Section 5]{Bhat-Skeide} (see also  \cite{GLS}), that the $C^*$-algebras $\m{A}_{s,t}^\sharp$ of the inductive dilation $\cali{A}^\sharp$ of $\cali{A}$ can be assembled into an inductive system with respect to the unit dilation $\{p_{s,t}^\sharp\}_{0<s<t}$ of $\{p_{s,t}\}_{0<s<t}$ (Proposition \ref{Nov06}). We then deduce that the $C^*$-algebra  $C^*_{\{ p_{s,t}\}}(\cali{A})$ is *-isomorphic to the inductive limit $\m{A}^\diamond_{\{ p_{s,t}\}}$ of this inductive system (Theorem \ref{March24}). After that, we construct a family $\{\iota_s\}_{s>0}$ of *-monomorphism $\iota_s:\m{A}^{\diamond}_{\{p_{s,t}\}}\to  C^*_{\{p_{s,t}\}}(\cali{A})\otimes C^*_{\{p_{s,t}\}}(\cali{A})$ that is compatible with both inductive limit structures. 
These identifications allow us to construct the one-parameter co-multiplication $\{\Delta_s\}_{s>0}$ in Theorem \ref{April2i}.

In Section \ref{sec5.1}, we study co-units of a $C^*$-subproduct system $\cali{A}$, i.e., families of states  $\{\varphi_{s,t}\}_{0<s<t}$ that are invariant with respect to the co-multiplication of the system. For this, we introduce the concept of idempotent state of the $C^*$-algebra $C^*_{\{p_{s,t}\}}(\cali{A})$ with respect to the unit $\{p_{s,t}\}_{0<s<t}$, in analogy with the concept of idempotent state of a compact quantum semigroup \cite{FS}.  We then show in Theorem \ref{DK1} and Proposition \ref{DK2} that any co-unit of a unital $C^*$-subproduct  system $\cali{A}$ gives rise to an idempotent state of the $C^*$-algebra $C^*_{\{ p_{s,t}\}}(\cali{A})$, with respect to a unit $\{p_{s,t}\}_{0<s<t}$, and that all co-units of $\cali{A}$ can be obtained in this way. We also show, in Proposition \ref {harici}, that the Hilbert spaces obtained by applying the GNS construction to the constituent states of a co-unit $\{\varphi_{s,t}\}_{0<s<t}$ of a $C^*$-subproduct system $\cali{A}$ form a Tsirelson subproduct system of Hilbert spaces $\cali{H}_{\{\varphi_{s,t}\}}$. We conclude this article by showing in Theorem \ref{harici1} that the Bhat-Mukherjee dilation of $\cali{H}_{\{\varphi_{s,t}\}}$ is isomorphic to the Tsirelson product system of Hilbert spaces $\cali{H}_{\{\varphi_{s,t}^\sharp\}}$ associated with the co-unit dilation $\{\varphi_{s,t}^\sharp\}_{0<s<t}$ of $\{\varphi_{s,t}\}_{0<s<t}$, given by Proposition \ref{DK}.

The main concepts and results of each section, as described above, will be illustrated concretely by using $C^*$-subproduct systems of commutative algebras as a model. Such systems are obtained from two-parameter multiplicative system of locally compact Hausdorff spaces by means of the Gelfand duality, as indicated in Example \ref{examp1}. The techniques used to investigate two-parameter multiplicative systems of compact Hausdorff spaces are based on the use of projective limits, mirroring the inductive limit techniques used in the study of $C^*$-subproduct systems.

We end this introductory section with a brief description of the notation used in this article, most of which is largely standard \cite{Arveson-book}\cite{Tak}. For general facts on inductive limits of $C^*$-algebras, we refer the reader to \cite{Sak}. 

In this paper, Hilbert spaces are not necessarily considered separable. The algebra of all bounded linear operators on a Hilbert space $H$ will be denoted $\cali{B}(H)$. Given two $C^*$-algebras $\m{A}$ and $\m{B}$, we will denote by $\m{A}\odot \m{B}$ their algebraic tensor product and by $\m{A}\otimes \m{B}$ their injective tensor product. For a state $\varphi$ of a $C^*$-algebra $\m{A}$, we shall denote by $\m{N}_\varphi=\{x\in\m{A}\,|\,\varphi(x^*x)=0\}$ its left kernel. The Borel $\sigma$-algebra on a topological space $X$ will be denoted $\op{Bor}(X)$. The pushforward measure of a measure $\mu$ on a measurable space $X$ along a measurable function $\chi$ on $X$ will be denoted $\chi_*\mu$.


\section{Background: definitions and examples}\label{3b100}
We begin this section by introducing some of the main concepts studied in this article. 
\begin{definition}\label{3b1}
A two-parameter co-multiplicative tensorial system of $C^*$-algebras, hereinafter referred to as a tensorial $C^*$-system, is a pair $\cali{A}=\left(\{\m{A}_{s,t}\}_{0<s<t}, \{\Delta_{r,s,t}\}_{0<r<s<t}\right)$
consisting of a family $\{\m{A}_{s,t}\}_{0<s<t}$ of $C^*$-algebras $\m{A}_{s,t}$, and a family $\{\Delta_{r,s,t}\}_{0<r<s<t}$ 
 of *-homomorphisms $\Delta_{r,s,t}: \m{A}_{r,t} \to \m{A}_{r,s}\otimes\m{A}_{s,t}$, called the co-multiplication of the tensorial $C^*$-system $\cali{A}$, which is co-associative in the sense that 
 the diagram
\begin{eqnarray}\label{Nov10d}
\xymatrixcolsep{5pc}\xymatrix@R+=1cm{\ar @{} [dr] 
\m{A}_{r,u}  \ar[d]^-{\Delta_{r,t,u}}\ar[r]^-{\Delta_{r,s,u}} & \m{A}_{r,s}\otimes  \m{A}_{s,u}\ar[d]^-{\op{id}_{\m{A}_{r,s}}\otimes \Delta_{s,t,u}} \\
 \m{A}_{r,t}\otimes  \m{A}_{t,u} \ar[r]_-{ \Delta_{r,s,t}\otimes \op{id}_{\m{A}_{t,u}} } & \m{A}_{r,s}\otimes  \m{A}_{s,t}\otimes  \m{A}_{t,u} }
\end{eqnarray}
commutes for all positive real numbers $0<r < s < t < u$, that is, $\left(\op{id}_{\m{A}_{r,s}}\otimes \Delta_{s,t,u}\right)\Delta_{r,s,u}=
\left( \Delta_{r,s,t}\otimes \op{id}_{\m{A}_{t,u}}  \right)\Delta_{r,t,u}$.
Here, as well as throughout this paper, $\op{id}_{\m{A}}$ denotes the identity mapping on a $C^*$-algebra $\m{A}$.
\end{definition}
The notion of unit of a tensorial $C^*$-system, defined below, is the operator algebraic counterpart of the notion of normalized unit of an Arveson product system of Hilbert spaces \cite[Section 4]{Arveson89}. The dual notion of co-unit will be defined and explored in Section \ref{sec5.1}.
\begin{definition} Let $\cali{A}=\left(\{\m{A}_{s,t}\}_{0<s<t}, \{\Delta_{r,s,t}\}_{0<r<s<t}\right)$ be a tensorial $C^*$-system. A unit of $\cali{A}$ is a family $\{p_{s,t}\}_{0<s<t}$ of non-zero projections $p_{s,t}\in \m{A}_{s,t}$ that satisfy the relation $\Delta_{r,s,t}(p_{r,t})=p_{r,s}\otimes p_{s,t}$, for all $0<r<s<t$. Any tensorial $C^*$-system that admits a units will be called unital. 
\end{definition}
One can naturally define the notion of morphism of tensorial $C^*$-systems, as follows.
\begin{definition} Suppose that $\cali{A}=(\{\m{A}_{s,t}\}_{0<s<t}, \{\Delta_{r,s,t}\}_{0<r<s<t})$ and $\cali{B}=\left(\{\m{B}_{s,t}\}_{0<s<t}, \{\Gamma_{r,s,t}\}_{0<r<s<t}\right)$ are tensorial  $C^*$-systems. A morphism from $\cali{A}$ to $\cali{B}$ is a family $\{\theta_{s,t}\}_{0<s<t}$ of *-homomorphisms $\theta_{s,t}:\m{A}_{s,t}\rightarrow \m{B}_{s,t}$ that make the diagram
\begin{eqnarray}\label{Nov10d}
\xymatrixcolsep{5pc}\xymatrix@R+=1cm{\ar @{} [dr] 
\m{A}_{r,t}  \ar[d]^-{\Delta_{r,s,t}}\ar[r]^-{\theta_{r,t}} & \m{B}_{r,t}\ar[d]^-{ \Gamma_{r,s,t}} \\
 \m{A}_{r,s}\otimes  \m{A}_{s,t} \ar[r]_-{ \theta_{r,s}\otimes\theta_{s,t} } & \m{B}_{r,s}\otimes  \m{B}_{s,t} }
\end{eqnarray}
 commute for all $0<r<s<t$, that is  $\Gamma_{r,s,t}\theta_{r,t}=(\theta_{r,s}\otimes \theta_{s,t}) \Delta_{r,s,t}.$ 
 
 A morphism $\{\theta_{s,t}\}_{0<s<t}$ is said to be a monomorphism, respectively an 
isomorphism, of tensorial $C^*$-systems if each *-homomorphism $\theta_{s,t}$ is a *-monomorphism, respectively *-isomorphism. \end{definition}
We focus mainly on the following two classes of tensorial $C^*$-systems.
\begin{definition}\label{May12022} A tensorial $C^*$-system $\cali{A}=(\{\m{A}_{s,t}\}_{0<s<t}, \{\Delta_{r,s,t}\}_{0<r<s<t})$ is said to be a $C^*$-subproduct system, respectively a $C^*$-product system, if the co-multiplication $\{\Delta_{r,s,t}\}_{0<r<s<t}$ consists of *-monomorphisms, respectively *-isomorphisms, of $C^*$-algebras.
\end{definition}

The notion of dilation of a $C^*$-subproduct system also plays a central role in our study. 
\begin{definition} A $C^*$-product system dilation of a $C^*$-subproduct system $\cali{A}$ consists of a $C^*$-product system $\cali{B}$ and a monomorphism from $\cali{A}$ to $\cali{B}$. \end{definition}
\begin{remarks}\label{Feb21-22}
(a) The class of tensorial $C^*$-systems is directly related to the class of one-parameter tensorial $C^*$-systems, i.e.,  pairs $\left(\{\m{Z}_{t}\}_{t>0}, \{\Xi_{s,t}\}_{0<s<t}\right)$ 
consisting of a family of $C^*$-algebras $\{\m{Z}_{t}\}_{t>0}$ and a family $\{\Xi_{s,t}\}_{0<s<t}$ 
 of *-homomorphisms $\Xi_{s,t}: \m{Z}_{s+t} \to \m{Z}_{s}\otimes\m{Z}_{t}$ that satisfy the co-associativity law $$\left(\op{id}_{\m{Z}_{r}}\otimes \Xi_{s,t}\right)\Xi_{r,s+t}=
\left( \Xi_{r,s}\otimes \op{id}_{\m{Z}_{t}}  \right)\Xi_{r+s,t},$$ for all $r,\,s,\, t>0$. The transition from one class to another can be achieved by adapting to our setting Tsirelson's arguments for systems of Hilbert spaces from \cite{Tsi03}. Although we do not intend to explore these connections in depth in this paper, we will briefly describe them below for future reference.

It is straightforward to obtain tensorial $C^*$-systems from one-parameter tensorial $C^*$-systems by simply setting $\m{A}_{s,t}=\m{Z}_{t-s}$, for all $0<s<t$, and $\Delta_{r,s,t}=\Xi_{s-r, t-s}$, for all $0<r<s<t$. The construction of one-parameter systems from two-parameter systems is a bit more involved, requiring a few changes and additions to the the original structure of a tensorial $C^*$-system. More precisely, the system $(\{\m{A}_{s,t}\}_{0\leq s<t}, \{\Delta_{r,s,t}\}_{0\leq r<s<t})$ should (i) be considered over the interval $[0,\infty)$, instead of $(0,\infty)$; (ii) be trivial, in sense that each $C^*$-algebra $\m{A}_{s,t}$ is *-isomorphic to a given $C^*$-algebra $\m{A}$, for all $0\leq s<t$; and (iii) be endowed with a homogeneous flow, i.e., a family $\{\alpha_{s,t}^h\}_{h\geq 0, 0\leq s<t}$ of *-isomorphisms of $C^*$-algebras $\alpha_{s,t}^h:\m{A}_{s,t}\to\m{A}_{s+h, t+h}$ that satisfy the following conditions: $\alpha_{s,t}^0=\op{id}_{\m{A}_{s,t}}$, $\alpha_{s+h, t+h}^{h'}\alpha_{s,t}^h=\alpha_{s,t}^{h+h'}$, for all $h,\, h'\geq 0$ and $0\leq s<t$, and $(\alpha_{r,s}^h\otimes\alpha^h_{s,t})\Delta_{r,s,t}= \Delta_{r+h,s+h,t+h}\alpha_{r,t}^h,$ for all  $h\geq 0$ and $0\leq r<s<t$. 

If these requirements are met, then by defining $\m{Z}_t=\m{A}_{0,t}$ for all $t>0$ and
$\Xi_{s,t}=\left(\op{id}_{\m{A}_{0,s}}\otimes (\alpha_{0,t}^s)^{-1}\right)\Delta_{0,s,s+t}$, for all $0<s<t$, we obtain that $\left(\{\m{Z}_{t}\}_{t>0}, \{\Xi_{s,t}\}_{0<s<t}\right)$ is a one-parameter $C^*$-tensorial system.

(b) Tensorial $W^*$-systems, including $W^*$-subproduct and product systems, can be defined in a similar way to tensorial $C^*$-systems by making some obvious adjustments. We note that continuous tensor product systems of $W^*$-algebras were also considered by V. Liebscher in \cite{Liebscher}, being defined as $W^*$-product systems over $[0,\infty)$, endowed with a homogeneous flow, as above. Consequently, they can be transformed into one-parameter $W^*$-product systems, following the procedure described in part (a).
\end{remarks}
In the rest of this section, we present a few relevant examples of tensorial $C^*$-systems, $C^*$-subproduct systems and $C^*$-product systems. We begin with two classes of examples that directly highlight the potential of tensorial $C^*$-systems to be studied in connection with both the theory of $C^*$-bialgebras \cite{Woro} and the theory of subproduct and product systems of Hilbert spaces. This perspective will be constantly maintained throughout this article.
\begin{example}\label{quantgr}
Let $(\m{A}, \Delta)$ be a $C^*$-bialgebra, consisting of a $C^*$-algebra $\m{A}$ and a co-multiplication
$\Delta:\m{A}\rightarrow \m{A}\otimes \m{A}$. Define $\m{A}_{s,t}=\m{A}$, for all $0<s<t$, and $\Delta_{r,s,t}=\Delta$, for all $0<r<s<t$. The resulting system
$\cali{A}_{\op{triv}}=\left(\{\m{A}_{s,t}\}_{0<s<t}, \{\Delta_{r,s,t}\}_{0<r<s<t}\right)$ is a $C^*$-tensorial system, which we call the trivial tensorial $C^*$-system associated with the $C^*$-bialgebra $(\m{A}, \Delta)$.
\end{example}
\begin{example}\label{exa1}
Let $\cali{H}=\left(\{H_{s,t}\}_{0<s<t}, \{U_{r,s,t}\}_{0<r<s<t}\right)$ be a Tsirelson subproduct system of finite dimensional Hilbert spaces. Take $\m{A}_{s,t}=\cali{B}(H_{s,t})$, for all $0<s<t$, and $\Delta_{r,s,t}=\op{ad}(U_{r,s,t})$, for all $0<r<s<t$, i.e., $\Delta_{r,s,t}(x)=U_{r,s,t}xU_{r,s,t,}^*$, for every $x\in \m{A}_{r,t}$. Then $\cali{A}_{\op{fin}}=\left(\{\m{A}_{s,t}\}_{0<s<t}, \{\Delta_{r,s,t}\}_{0<r<s<t}\right)$ is a $C^*$-subproduct system of matrix algebras. 

As in Remark \ref{Feb21-22}(a), one can use Arveson subproduct systems of finite dimensional Hilbert spaces to construct Tsirelson subproduct systems of finite dimensional Hilbert spaces, thus producing $C^*$-subproduct systems of matrix algebras from finite-dimensional Arveson subproduct systems. We note that a complete classification of Arveson subproduct systems of 2-dimensional Hilbert spaces was obtained by B. Tsirelson in \cite{Tsi09a, Tsi09b} (see also \cite{GS} for related results and applications).
\end{example}
Next, we focus on the description of tensorial $C^*$-systems of commutative $C^*$-algebras. This class will be examined in detail throughout this article.
\begin{example}\label{examp1} Let  $\cali{X}=\left(\{X_{s,t}\}_{0<s<t}, \{\chi_{r,s,t}\}_{0<r<s<t} \right)$ be a ``two-parameter multiplicative system of locally compact Hausdorff  spaces'', consisting of a family 
$\{X_{s,t}\}_{0<s<t}$ of locally compact Hausdorff spaces $X_{s,t}$, and a ``multiplication'' $\{\chi_{r,s,t}\}_{0<r<s<t}$, comprised of continuous functions $\chi_{r,s,t}:X_{r,s}\times X_{s,t}\rightarrow X_{r,t}$ that are proper and satisfy the associativity law \begin{eqnarray}\label{assiciative property1}\chi_{r,t,u}(\chi_{r,s,t}\times \op{id}_{X_{t,u}})=\chi_{r,s,u}(\op{id}_{X_{r,s}}\times \chi_{s,t,u}),\end{eqnarray} for all
 all $0<r<s<t<u$. Consider the commutative $C^*$-algebra  $\m{A}_{s,t}=C_0(X_{s,t})$ of all complex-valued continuous functions on $X_{s,t}$ vanishing at infinity, for all $0<s<t$, and the *-homomorphism $\Delta_{r,s,t}:C_0(X_{r,t})\rightarrow C_0(X_{r,s})\otimes C_0(X_{s,t})$ induced by $\chi_{r,s,t}$, i.e., $\Delta_{r,s,t}f= f\comp\chi_{r,s,t},$
for all $f\in C_0(X_{r,t})$ and $0<r<s<t$, which is obtained by identifying  the $C^*$-algebras 
$C_0(X_{r,s})\otimes C_0(X_{s,t})$ and $C_0(X_{r,s}\times X_{s,t})$. The resulting system $\cali{A}_{\op{com}}=\left(\{\m{A}_{s,t}\}_{0<s<t}, \{\Delta_{r,s,t}\}_{0<r<s<t}\right)$
is a tensorial $C^*$-system. This system is a $C^*$-subproduct system, respectively a $C^*$-product system, provided that the functions $\chi_{r,s,t}$ are all surjective, respectively bijective. Conversely, using the Gelfand representation and the Gelfand-Kolmogorov theorem, we can easily deduce that any tensorial $C^*$-system $\left(\{\m{A}_{s,t}\}_{0<s<t}, \{\Delta_{r,s,t}\}_{0<r<s<t}\right)$ of commutative $C^*$-algebras $\m{A}_{s,t}$ with proper *-homomorphism $\Delta_{r,s,t}$ arises from a two-parameter multiplicative system of locally compact Hausdorff  spaces.

It is clear that the units of $\cali{A}_{\op{com}}$ are those families $\{1_{Y_{s,t}}\}_{0<s<t}$ of indicator functions $1_{Y_{s,t}}$ of open compact sets $Y_{s,t}\subset X_{s,t}$ satisfying the compatibility condition $1_{Y_{r,t}}\comp\chi_{r,s,t}=1_{Y_{r,s}\times Y_{s,t}}$, for all $0<r<s<t$. In particular, if the spaces $X_{s,t}$ are all connected, then  $\cali{A}_{\op{com}}$ does not have non-trivial units.

We illustrate the above concept through a direct construction based on a single compact Hausdorff space $X$. Let $X_{s,t}=\prod_{(s,t]}X$ be the cartesian product of copies of X over the interval half-open $(s, t]$, for all $0 < s < t$, endowed with the product topology, and $\chi_{r,s,t}:X_{r,s}\times X_{s,t}\rightarrow X_{r,t}$ be the gluing function \begin{eqnarray}\label{cucu7}\chi_{r,s,t}(f,g)(x)=\left\{\begin{array}{cc}f(x),&\mbox{if}\;r<x\leq s\\ g(x),&\mbox{if}\;s<x\leq t \end{array}\right.\end{eqnarray} for all $f\in {X}_{r,s}$,  $g\in {X}_{s,t}$, and $r<x\leq t$. Then the resulting system $\left(\{X_{s,t}\}_{0<s<t}, \{\chi_{r,s,t}\}_{0<r<s<t} \right)$ is a two-parameter multiplicative system of compact Hausdorff spaces, and its associated tensorial $C^*$-system is a $C^*$-product system. 
\end{example}
By adapting the above method to the setting of locally compact groups, we can also obtain tensorial $C^*$-systems of reduced group $C^*$-algebras, as follows.

\begin{example}\label{cucuu}
Let $\cali{G}=
\left(\{G_{s,t}\}_{0<s<t}, \{\mu_{s,t}\}_{0<s<t},\{\chi_{r,s,t}\}_{0<r<s<t} \right)$ be a ``two-parameter multiplicative system of locally compact groups'', consisting of a family 
$\{G_{s,t}\}_{0<s<t}$ of locally compact groups $G_{s,t}$ with left Haar measure $\mu_{s,t}$ and Haar modulus $\mathit{\Delta}_{G_{s,t}}$, and a family $\{\chi_{r,s,t}\}_{0<r<s<t}$ of measurable and measure-preserving group homomorphisms $\chi_{r,s,t}:G_{r,s}\times G_{s,t}\rightarrow G_{r,t}$ that are modular invariant, in the sense that that $\mathit{\Delta}_{G_{r,s}\times G_{s,t}} =\mathit{\Delta}_{G_{r,t}}\comp \chi_{r,s,t}$, for all $0<r<s<t<u$, and that satisfy the associativity law (\ref{assiciative property1}). 
For any two real numbers $0<s<t$, consider the reduced group $C^*$-algebra $\m{A}_{s,t}=C^*_r(G_{s,t})$ of the group $G_{s,t}$. By identifying the injective tensor product of Banach spaces $L^1(G_{r,s})\otimes L^1(G_{s,t})$ with the Banach space $L^1(G_{r,s}\times G_{s,t})$, for all $0<r<s<t$, we can consider the operators $\Delta_{r,s,t}:L^1(G_{r,t})\rightarrow L^1(G_{r,s})\otimes L^1(G_{s,t})$ induced by the homomorphisms $\chi_{r,s,t}$, that is $\Delta_{r,s,t}f=f\comp \chi_{r,s,t},$ for all $f\in L^1(G_{r,t})$. Each operator $\Delta_{r,s,t}$ admits a unique extension to a *-homomorphism, denoted by the same letter, $\Delta_{r,s,t}: \m{A}_{r,t} \to \m{A}_{r,s}\otimes\m{A}_{s,t}$, and 
the resulting system $\cali{A}_{\op{red}}=\left(\{\m{A}_{s,t}\}_{0<s<t}, \{\Delta_{r,s,t}\}_{0<r<s<t}\right)$ is a tensorial $C^*$-system.

Proceeding as in Example \ref{examp1}, one can immediately construct a two-parameter multiplicative system of compact groups starting with a  compact group $G$ with (normalized) Haar measure $\mu$:  set $G_{s,t}=\prod_{(s,t]}G$, for all $0<s<t$, and define $\chi_{r,s,t}:G_{r,s}\times G_{s,t}\rightarrow G_{r,t}$ as in (\ref{cucu7}), for all $0<r<s<t$.
\end{example}

In the following example, we construct $C^*$-product systems from measurable Arveson systems of Hilbert spaces. We only outline the main steps of this construction, whose full details and consequences will be discussed elsewhere.

\begin{example}\label{remus}
Let  $E=\left(\{E_{t}\}_{t>0}, \{V_{s,t}\}_{s,t>0}\right)$ be a measurable Arveson product system of Hilbert spaces. Consider the direct integral Hilbert space $L^2(E)=\int _{(0,\infty)}^\oplus E_t\,dt$ and the left regular representation $\ell$ of $E$ on  
 $L^2(E)$, given by $$(\ell_xf)(s)=\left\{\begin{array}{cc} V_{t,s-t}^{-1}(x\otimes  f(s-t)),&\mbox{if}\;s>t\\0,&\mbox{if}\;0<s\leq t
 \end{array} \right.$$ for all $f\in L^2(E)$,  $x\in E_t$ and $t>0$ (see \cite[Prop. 3.3.1]{Arveson-book}). We also consider the $E$-semigroup $\alpha=\{\alpha_t\}_{t\geq 0}$ of $\cali{B}(L^2(E))$ induced by $\ell$, i.e., $$\alpha_t(x)=\sum_{n=1}^\infty \ell_{e_n(t)}x \ell_{e_n(t)}^*,$$ for all $x\in\cali{B}(L^2(E))$, where
 $\{e_n(t)\}_{n\geq 1}$ is an orthonormal basis for $E_t$ chosen so that the mapping  $t\mapsto e_n(t)$ is Borel measurable, for all positive integers $n$.
 For every $t>0$, let also $A_t$ be the norm-closed linear span $$A_t= \overline{\op{span}}\{\ell_x\ell_y^*\,|\, x,\, y\in E_{t}\}.$$ The spaces $A_t$ are $C^*$-algebras that are isomorphic to the $C^*$-algebra of compact operators, and satisfy the relation
 $A_sA_t\subseteq A_{\max (s,t)}$ for all $s,\,t>0$.
Using this, we define the $C^*$-algebra $$
\m{A}_{s,t} = \alpha_s(A_{t-s}),$$ for all  $0<s<t$.
Noticing that the $C^*$-algebras $
\m{A}_{r,s}$ and $
\m{A}_{s,t}$ commute, and $
\m{A}_{r,s}
\m{A}_{s,t}\subseteq
\m{A}_{r,t}$,  for all $0<r<s<t$, we deduce that the multiplication mapping
$$\m{A}_{r,s}\odot \m{A}_{s,t}\ni x\otimes y\mapsto xy\in \m{A}_{r,t}$$ induces a unique isomorphism of $C^*$-algebras $\Delta_{r,s,t}:\m{A}_{r,s}\otimes \m{A}_{s,t}\to\m{A}_{r,t}$, for all $0<r<s<t$. The resulting system $\cali{A}_{\op{ps}}=\left(\{\m{A}_{s,t}\}_{0<s<t}, \{\Delta_{r,s,t}^{-1}\}_{0<r<s<t}\right)$ is a $C^*$-product system. It is clear that if $\{u(t)\}_{t>0}$ is a normalized unit of the Arveson product system $E$ and $p_{s,t}=\alpha_s(\ell_{u(t-s)}\ell_{u(t-s)}^*)$, for all $0<s<t$, then $\{p_{s,t}\}_{0<s<t}$ is a unit of the $C^*$-product system $\cali{A}_{\op{ps}}$.
\end{example}

 
 \section{The inductive dilation of a $C^*$-subproduct system}\label{ch3.1}
 Let $\cali{A}=\left(\{\m{A}_{s,t}\}_{0<s<t}, \{\Delta_{r,s,t}\}_{0<r<s<t}\right)$ be a $C^*$-subproduct system. For any two positive real numbers  $0<s < t$,  consider the partially ordered set $(\cali{P}_{s,t},\subseteq)$ of all finite partitions of the interval $[s,t]$, ordered by inclusion. For any partition $I\in \cali{P}_{s,t} $ of the form $I=\{s=\iota_0<\iota_1<\iota_2<\,\dots<\iota_m<\iota_{m+1}=t\},$ we define the $C^*$-algebra \begin{eqnarray}\label{jan15}\m{A}_I=
\m{A}_{\iota_0, \iota_1}\otimes \m{A}_{\iota_1,\iota_2}\otimes \dots\otimes \m{A}_{\iota_m,\iota_{m+1}}.\end{eqnarray}
 If $J\in \cali{P}_{s,t}$ is a refinement of $I$, i.e., $I\subseteq J$, then the partition $J$ can decomposed, with respect to $I$, as \begin{eqnarray}\label{decoopa}J=I_0\cup I_1\cup \cdots \cup I_m,\end{eqnarray} where $I_i=\{j\in J, \iota_i\leq j\leq \iota_{i+1}\}=\{\iota_i=\iota_{i_0}<\iota_{i_1}<\dots<\iota_{i_{n_{I_i}}}<\iota_{i+1}\}\in \cali{P}_{\iota_i,\iota_{i+1}},$ for some $n_{I_i}\in\mathbb{N}$. 
 Accordingly, the $C^*$-algebra $\m{A}_J$ can be decomposed as $
\m{A}_J=\m{A}_{I_0}\otimes \m{A}_{I_1}\otimes\dots\otimes \m{A}_{I_m}.$

\begin{definition}\label{Jan21}Let $I=\{s=\iota_0<\iota_1<\iota_2<\,\dots<\iota_m<\iota_{m+1}=t\}\in \cali{P}_{s,t}$.\\ (a) We consider the *-monomorphism $\Delta_{\{s,t\},I}:\m{A}_{s,t}\to \m{A}_{I}$, defined iteratively as follows: \begin{eqnarray*}\Delta_{\{s,t\},I}=\left\{\begin{array}{llll}\Delta_{\iota_0,\iota_1,\iota_2}, &\mbox{if}\;m=1\\\left(\Delta_{\{\iota_0,\iota_m\},I\setminus\{\iota_{m+1}\}}\otimes \op{id}_{\iota_m,\iota_{m+1}}\right)\Delta_{\iota_0,\iota_m,\iota_{m+1}},
& \mbox{if}\; m\geq 2
\end{array}\right.\end{eqnarray*}
Here $I\setminus\{\iota_{m+1}\}\in \cali{P}_{s,\iota_m}$ is the partition obtained by removing the endpoint $t=\iota_{m+1}$ from $I$, and $\op{id}_{\iota_m,\iota_{m+1}}$
is the identity mapping on $\m{A}_{\iota_m,\iota_{m+1}}$.\\
(b) For any refinement $J\in \cali{P}_{s,t}$ of $I$, decomposed as $J=I_0\cup \ldots \cup I_m$ with respect to $I$, we consider the *-monomorphism $\Delta_{I,J}:\m{A}_I\rightarrow \m{A}_J$, \begin{eqnarray*}\label{jan16}
\Delta_{I,J}=\Delta_{\{\iota_0,\iota_1\}, I_0}\otimes \Delta_{\{\iota_1,\iota_2\}, I_1}\otimes \cdots \Delta_{\{\iota_m,\iota_{m+1}\}, I_m}.\end{eqnarray*}
For $I=J$, we set $\Delta_{I,I}=\op{id}_{\m{A}_I}$.
\end{definition}
\begin{remark}\label{Nov14a}The *-monomorphism $\Delta_{\{s,t\},I}$ can be expanded as
$\Delta_{\{s,t\},I}=(\Delta_{\iota_0,\iota_1,\iota_2}\otimes \op{id}_{\iota_2,\iota_3}\otimes\cdots \op{id}_{\iota_m,\iota_{m+1}})
(\Delta_{\iota_0,\iota_2,\iota_3}\otimes \op{id}_{\iota_3,\iota_4}\otimes\dots \op{id}_{\iota_m,\iota_{m+1}})
\cdots\Delta_{\iota_0,\iota_m,\iota_{m+1}}\\
= (\op{id}_{\iota_0,\iota_1}\otimes \dots \otimes  \op{id}_{\iota_{m-3},\iota_{m-2}}\otimes
\Delta_{\iota_{m-1},\iota_m,\iota_{m+1}})\cdots ( \op{id}_{\iota_0,\iota_1}\otimes \Delta_{\iota_1,\iota_2,\iota_{m+1}})\Delta_{\iota_0,\iota_1,\iota_{m+1}},$
for all $m\geq 2$.
\end{remark}
\begin{lemma}\label{Aug17} Let $I,\, J\in \cali{P}_{s,t}$, $I\subseteq J$. We then have
\begin{enumerate}\item[(i)] $\Delta_{\{s,t\},J}=\Delta_{I,J}\Delta_{\{s,t\}, I};$
\item[(ii)] $\Delta_{I,J}=\Delta_{I\cap[s,u],J\cap[s,u]}\otimes\Delta_{I\cap[u,t], J\cap[u,t]},$ for every $u\in I$, $s<u<t$.
\end{enumerate}
\end{lemma}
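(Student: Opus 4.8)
The plan is to dispose of (ii) by a purely formal regrouping of tensor factors, and to prove (i) by induction, isolating its essential content in the three-point case, where co-associativity enters.

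For (ii), write $u=\iota_k$ and decompose $J=I_0\cup\cdots\cup I_m$ with respect to $I$ as in \eqref{decoopa}. By Definition \ref{Jan21}(b), $\Delta_{I,J}=\Delta_{\{\iota_0,\iota_1\},I_0}\otimes\cdots\otimes\Delta_{\{\iota_m,\iota_{m+1}\},I_m}$. Since $u\in I$, the block decomposition of $J\cap[s,u]$ with respect to $I\cap[s,u]=\{\iota_0,\dots,\iota_k\}$ is exactly the initial segment $I_0\cup\cdots\cup I_{k-1}$, and that of $J\cap[u,t]$ with respect to $I\cap[u,t]$ is the terminal segment $I_k\cup\cdots\cup I_m$. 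Hence $\Delta_{I\cap[s,u],J\cap[s,u]}$ and $\Delta_{I\cap[u,t],J\cap[u,t]}$ are precisely the first $k$ and the last $m-k+1$ tensor factors of $\Delta_{I,J}$, and (ii) follows from the associativity of the tensor product. The only point to verify is this bookkeeping of the block decomposition, which is immediate.

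For (i), I would induct on the number $m$ of interior points of $I$. The case $m=0$ (that is, $I=\{s,t\}$) is trivial, since then $\Delta_{I,J}=\Delta_{\{s,t\},J}$ and $\Delta_{\{s,t\},I}=\op{id}$. The heart of the matter is the three-point case $I=\{s,u,t\}$, which unwinds to the factorization identity
\begin{equation*}
\Delta_{\{s,t\},J}=\bigl(\Delta_{\{s,u\},J\cap[s,u]}\otimes\Delta_{\{u,t\},J\cap[u,t]}\bigr)\Delta_{s,u,t},\qquad u\in J,\ s<u<t.
\end{equation*}
I would prove this by a secondary induction on the number of points of $J$ lying strictly between $u$ and $t$. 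When $u$ is the last interior point of $J$, the identity is exactly the defining recursion of Definition \ref{Jan21}(a). For the inductive step, let $w$ be the last interior point of $J$; peel off the last subinterval via the recursion, apply the inductive hypothesis on $J\setminus\{t\}\in\cali{P}_{s,w}$ to split at $u$, and then invoke the co-associativity law of Definition \ref{3b1} for the four points $s<u<w<t$ to transport the tensor regrouping across $\Delta_{s,u,t}$; recollecting the two right-hand factors back into $\Delta_{\{u,t\},J\cap[u,t]}$ via the recursion yields the claim. This single, carefully placed use of co-associativity, buried under the reindexing of iterated tensor legs, is the main obstacle; everything else is the exchange law $(f_1\otimes f_2)(g_1\otimes g_2)=(f_1g_1)\otimes(f_2g_2)$.

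Finally, for general $m\ge 2$ I would reduce to the three-point case. Writing $I'=I\setminus\{t\}$ and $J'=J\cap[s,\iota_m]$, the recursion gives $\Delta_{\{s,t\},I}=(\Delta_{\{s,\iota_m\},I'}\otimes\op{id}_{\iota_m,t})\Delta_{s,\iota_m,t}$, while (ii) applied at $u=\iota_m$ gives $\Delta_{I,J}=\Delta_{I',J'}\otimes\Delta_{\{\iota_m,t\},I_m}$. Composing these and using the exchange law together with the inductive hypothesis $\Delta_{I',J'}\Delta_{\{s,\iota_m\},I'}=\Delta_{\{s,\iota_m\},J'}$ collapses $\Delta_{I,J}\Delta_{\{s,t\},I}$ to $(\Delta_{\{s,\iota_m\},J'}\otimes\Delta_{\{\iota_m,t\},I_m})\Delta_{s,\iota_m,t}$, which is exactly $\Delta_{\{s,t\},J}$ by the already-established three-point factorization at $u=\iota_m$. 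This closes the induction and completes (i).
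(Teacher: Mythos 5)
Your proposal is correct and takes essentially the same route as the paper: part (ii) is the same tensor-block bookkeeping, and part (i) rests on the same reduction to the three-point case, where co-associativity is used to transport tensor factors across $\Delta_{s,u,t}$. The only difference is organizational --- the paper compresses your primary induction into a ``without loss of generality'' reduction to $I=\{s,\iota_1,t\}$ and your secondary induction into the closed-form expansions of Remark \ref{Nov14a} --- but the underlying argument is identical.
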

\begin{proof} (i) Without loss of generality, one can assume that the partition $I$ is of the form $I=\{s=\iota_0<\iota_1<\iota_{2}=t\}$. The refinement $J$ can then be written as $J=I_0\cup I_1$,
where $I_i=\{\iota_i=\iota_{i_0}<\iota_{i_1}<\dots<\iota_{i_{n_{I_i}}}<\iota_{i+1}\}\in \cali{P}_{\iota_i,\iota_{i+1}}$, for some $n_{I_i}\in\mathbb{N}$, $i\in\{0,1\}$. Using Remark \ref{Nov14a} and the co-associativity law (\ref{Nov10d}), we deduce that
\begin{eqnarray*}\label{crazy1}
(\op{id}_{s,\iota_1}\otimes \Delta_{\{\iota_1,t\}, I_1})\Delta_{s,\iota_1,t}=(\Delta_{\iota_0,\iota_1,\iota_{1_1}}\otimes \op{id})\cdots (\Delta_{\iota_0,\iota_1,\iota_{1_{n_{I_1}-1}}}\otimes \op{id})\Delta_{s,  \iota_{1_{n_{I_1}}}, \iota_2}.
\end{eqnarray*} Consequently, we obtain that
\begin{eqnarray*}
&&\Delta_{I,J}\Delta_{\{s,t\}, I}=(\Delta_{\{s,\iota_1\}, I_0}\otimes \Delta_{\{\iota_1,t\}, I_1})\Delta_{s,\iota_1,t}\\&=&(\Delta_{\{s,\iota_1\}, I_0}\otimes \op{id}_{I_1})(\op{id}_{s,\iota_1}\otimes \Delta_{\{\iota_1,t\}, I_1})\Delta_{s,\iota_1,t}\\
&=&
[(\Delta_{\iota_0,\iota_{0_1},\iota_{0_2}}\otimes \op{id}  )(\Delta_{\iota_0,\iota_{0_2},\iota_{0_3}}\otimes \op{id}  )\cdots 
(\Delta_{\iota_0,\iota_{0_{n_{I_0}-1}},\iota_{0_{n_{I_0}}}}\otimes \op{id})\Delta_{\iota_0,\iota_{0_{n_{I_0}}},\iota_{1}}\otimes \op{id}]\\
&\comp&(\op{id}_{s,\iota_1}\otimes \Delta_{\{\iota_1,t\}, I_1})\Delta_{s,\iota_1,t}\\&=&
(\Delta_{\iota_0,\iota_{0_1},\iota_{0_2}}\otimes \op{id}  )(\Delta_{\iota_0,\iota_{0_2},\iota_{0_3}}\otimes \op{id}  )\cdots 
(\Delta_{\iota_0,\iota_{0_{n_{I_0}-1}},\iota_{0_{n_{I_0}}}}\otimes \op{id})(\Delta_{\iota_0,\iota_{0_{n_{I_0}}},\iota_{1}}\otimes \op{id})\\
&\comp&(\Delta_{\iota_0,\iota_1,\iota_{1_1}}\otimes \op{id})\cdots (\Delta_{\iota_0,\iota_1,\iota_{1_{n_{I_1}-1}}}\otimes \op{id})\Delta_{s,  \iota_{1_{n_{I_1}}}, \iota_2}\\&=&\Delta_{\{s,t\}, J}.
\end{eqnarray*} 
(ii) Suppose that $I=\{s=\iota_0<\iota_1<\iota_2<\,\dots<\iota_m<\iota_{m+1}=t\}\in \cali{P}_{s,t}$, and let $ J=I_0\cup \ldots \cup I_m$ be the decomposition of $J$ with respect to $I$. Let $k$ be a positive integer, $1\leq k\leq m$, such that $u=\iota_k$. Then $I\cap[s,u]\in \cali{P}_{s,\iota_k}$, $I\cap[s,u]\subseteq J\cap[s,u]$, $I\cap[u,t]\in \cali{P}_{\iota_k, t}$, $I\cap[u,t]\subseteq J\cap[u,t]$, $J\cap[s,u]=I_0\cup \ldots \cup I_{k-1}$, and  $J\cap[u,t]=I_k\cup \ldots \cup I_m$. It follows that
\begin{eqnarray*}&&\Delta_{I\cap[s,u],J\cap[s,u]}\otimes\Delta_{I\cap[u,t], J\cap[u,t]}=\Delta_{\{\iota_0,\iota_1\}, I_0}\otimes \Delta_{\{\iota_1,\iota_2\}, I_1}\otimes \cdots \Delta_{\{\iota_{k-1},\iota_{k}\}, I_k}\\
&\otimes&\Delta_{\{\iota_k,\iota_{k+1}\}, I_k}\otimes \Delta_{\{\iota_{k+1},\iota_{k+2}\}, I_{k+1}}\otimes \cdots \Delta_{\{\iota_m,\iota_{m+1}\}, I_m}=\Delta_{I,J},
\end{eqnarray*}
as claimed.
\end{proof}

\begin{proposition}\label{lemma inductive limit-c} Let $\cali{A}=(\{\m{A}_{s,t}\}_{0<s<t}, \{\Delta_{r,s,t}\}_{0<r<s<t})$ be a $C^*$-subproduct system. For any two positive real numbers  $0<s < t$, the system
$$
\Bigl\{(\m{A}_I,\Delta_{I,J})\,|\, I,\,J \in \cali{P}_{s,t},\; I\subseteq J \Bigr\}
$$
is an inductive system of $C^*$-algebras over the partially ordered set $( \cali{P}_{s,t},\subseteq )$.
\end{proposition}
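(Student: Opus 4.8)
The plan is to verify directly the three defining conditions of an inductive system of $C^*$-algebras over the directed set $(\cali{P}_{s,t},\subseteq)$: that the index set is upward-directed, that each connecting morphism is a $*$-homomorphism with $\Delta_{I,I}=\op{id}_{\m{A}_I}$, and that the morphisms compose coherently, i.e. $\Delta_{J,K}\,\Delta_{I,J}=\Delta_{I,K}$ whenever $I\subseteq J\subseteq K$. Two of these are immediate. First, $(\cali{P}_{s,t},\subseteq)$ is directed because for any two finite partitions $I_1,I_2$ of $[s,t]$ their union $I_1\cup I_2$ is again a finite partition of $[s,t]$ refining both, so every pair has an upper bound. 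Second, that each $\Delta_{I,J}$ is a $*$-monomorphism and that $\Delta_{I,I}=\op{id}_{\m{A}_I}$ are built into Definition \ref{Jan21}. Thus the whole statement reduces to the cocycle (transitivity) identity, which is the only substantial point.

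To establish $\Delta_{J,K}\,\Delta_{I,J}=\Delta_{I,K}$ for $I\subseteq J\subseteq K$, I would reduce to the single-interval case handled by Lemma \ref{Aug17}(i), using Lemma \ref{Aug17}(ii) to split everything into tensor factors over one common block decomposition. Writing $I=\{s=\iota_0<\iota_1<\dots<\iota_{m+1}=t\}$, I decompose both finer partitions relative to $I$ by setting $J_i=J\cap[\iota_i,\iota_{i+1}]$ and $K_i=K\cap[\iota_i,\iota_{i+1}]$, so that $J_i\subseteq K_i$ in $\cali{P}_{\iota_i,\iota_{i+1}}$ and $J=J_0\cup\dots\cup J_m$, $K=K_0\cup\dots\cup K_m$. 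Since each interior point $\iota_i$ lies in $I\subseteq J\subseteq K$, repeated application of the tensor-splitting of Lemma \ref{Aug17}(ii) at these points gives the three aligned factorizations
\[
\Delta_{I,J}=\bigotimes_{i=0}^{m}\Delta_{\{\iota_i,\iota_{i+1}\},J_i},\qquad
\Delta_{J,K}=\bigotimes_{i=0}^{m}\Delta_{J_i,K_i},\qquad
\Delta_{I,K}=\bigotimes_{i=0}^{m}\Delta_{\{\iota_i,\iota_{i+1}\},K_i}.
\]

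With these factorizations aligned over the same blocks, the composition distributes factorwise, so that
\[
\Delta_{J,K}\,\Delta_{I,J}=\bigotimes_{i=0}^{m}\left(\Delta_{J_i,K_i}\,\Delta_{\{\iota_i,\iota_{i+1}\},J_i}\right),
\]
and Lemma \ref{Aug17}(i), applied on each interval $[\iota_i,\iota_{i+1}]$ to the pair $J_i\subseteq K_i$, identifies the $i$-th factor with $\Delta_{\{\iota_i,\iota_{i+1}\},K_i}$. The resulting tensor product is exactly $\Delta_{I,K}$, which yields the transitivity and completes the verification.

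The hard part is bookkeeping rather than conceptual: one must ensure that the three tensor factorizations are taken relative to the \emph{same} block decomposition $\{[\iota_i,\iota_{i+1}]\}$ of $[s,t]$, so that the factors line up and the composition genuinely distributes over $\otimes$. This coherence is precisely what Lemma \ref{Aug17}(ii) supplies, since every $\iota_i$ belongs to each partition in the chain $I\subseteq J\subseteq K$; once the factorizations are aligned, the argument is entirely factorwise and rests on the single-interval identity of Lemma \ref{Aug17}(i).
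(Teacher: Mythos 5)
Your proof is correct and follows essentially the same route as the paper's: both reduce the transitivity identity $\Delta_{J,K}\,\Delta_{I,J}=\Delta_{I,K}$ to blockwise applications of Lemma \ref{Aug17}(i) on the subintervals $[\iota_i,\iota_{i+1}]$ determined by $I$, after aligning all three maps as tensor products over those blocks. The only difference is presentational: where you invoke Lemma \ref{Aug17}(ii) to factor $\Delta_{J,K}$ over the blocks of $I$, the paper obtains the same factorization by explicitly regrouping the blocks of $K$ relative to $J$ into blocks relative to $I$.
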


\begin{proof} Consider three arbitrary partitions $I,\,J,\,K\in \cali{P}_{s,t}$ such that $I\subseteq J\subseteq  K$. We claim that the compatibility condition $$\Delta_{I,K}=\Delta_{J,K} \Delta_{I,J}$$
is satisfied. For this purpose, suppose that $I=\{s=\iota_{0}<\iota_{1}<\iota_{2}<\,\dots<\iota_{m}<\iota_{{m+1}}=t\}$, and let $J=I_0\cup \cdots \cup I_m$ be the decomposition of $J$ with respect to $I$, where for each $0\leq i\leq m$, $I_i=\{\iota_i=\iota_{i_0}<\iota_{i_1}<\dots<\iota_{i_{n_{I_i}}}<\iota_{i+1}\}\in \cali{P}_{\iota_i,\iota_{i+1}}$, for some $n_{I_i}\in\mathbb{N}$. Let also $K=J_0\cup J_1\cup\dots\cup J_\ell$ be the decomposition of $K$ with respect to $J$, where $J_0\in  \cali{P}_{\iota_{0_0},\iota_{0_{1}}}$, $J_1\in  \cali{P}_{\iota_{0_1},\iota_{0_{2}}}$, $\cdots$, $J_{n_{I_0}}\in  \cali{P}_{\iota_{0_{n_{I_0}}},\iota_{{1}}}$, $\cdots$, $J_{\ell}\in  \cali{P}_{\iota_{m_{n_{I_m}}},t}$.
Putting it all together, we obtain that $$K=I_0'\cup I_1'\cup\dots\cup I_m',$$ where $I_0'=J_0\cup J_1\cup\dots \cup J_{n_{I_0}}$, $I_1'=J_{n_{I_0}+1}\cup\dots \cup J_{n_{I_1}}$, $\dots$, $I_m'=J_{n_{I_{m-1}}+1}\cup\dots \cup J_\ell$. This is exactly the partition decomposition of $K$ with respect to $I$.
 Since \begin{eqnarray*}
\Delta_{I_0, I_0'}&=&\Delta _{\{\iota_{0_0},\iota_{0_1}\}, J_0}\otimes \Delta _{\{\iota_{0_1},\iota_{0_2}\}, J_1}\otimes \dots\otimes \Delta _{\{\iota_{0_{n_{I_0}}},\iota_{1}\}, J_{n_{I_0}}}\\
\Delta_{I_1, I_1'}&=&\Delta _{\{\iota_{1_0},\iota_{1_1}\}, J_{n_{I_0}+1}}\otimes \Delta _{\{\iota_{1_1},\iota_{1_2}\}, J_{n_{I_0}+2}}\otimes \dots\otimes \Delta _{\{\iota_{1_{n_{I_1}}},\iota_{2}\}, J_{n_{I_1}}}\\
\cdots&\cdots&\cdots\cdots\cdots\\
\Delta_{I_m, I_m'}&=&\Delta _{\{\iota_{m_0},\iota_{m_1}\}, J_{n_{I_{m-1}}+1}}\otimes  \dots\otimes \Delta _{\{\iota_{m_{n_{I_m}}},\iota_{m+1}\}, J_\ell},
\end{eqnarray*}  
 we have that  $\Delta_{I_0, I_0'}\otimes \Delta_{I_1, I_1'} \otimes \cdots \otimes\Delta_{I_m, I_m'}=\Delta _{\{\iota_{0_0},\iota_{0_1}\}, J_0}\otimes \Delta _{\{\iota_{0_1},\iota_{0_2}\}, J_1}\otimes\dots \otimes\Delta _{\{\iota_{m_{n_{I_m}}},\iota_{m+1}\}, J_\ell}=\Delta_{J,K}.$
Using this identity and Lemma \ref{Aug17}, we obtain that\begin{eqnarray*}
&&\Delta_{I,K}=\Delta_{\{\iota_0,\iota_1\}, I_0'}\otimes \Delta_{\{\iota_1,\iota_2\}, I_1'}\otimes \cdots\otimes \Delta_{\{\iota_m,\iota_{m+1}\}, I_m'}\\
&=&\Delta_{I_0, I_0'}\Delta_{\{\iota_0,\iota_1\}, I_0}\otimes \Delta_{I_1, I_1'} \Delta_{\{\iota_1,\iota_2\}, I_1}\otimes \cdots \otimes\Delta_{I_m, I_m'}\Delta_{\{\iota_m,\iota_{m+1}\}, I_m}\\
&=&(\Delta_{I_0, I_0'}\otimes \Delta_{I_1, I_1'} \otimes \cdots \Delta_{I_m, I_m'})(\Delta_{\{\iota_0,\iota_1\}, I_0}\otimes \Delta_{\{\iota_1,\iota_2\}, I_1}\otimes \cdots \Delta_{\{\iota_m,\iota_{m+1}\}, I_m})\\
&=&\Delta_{J,K}(\Delta_{\{\iota_0,\iota_1\}, I_0}\otimes \Delta_{\{\iota_1,\iota_2\}, I_1}\otimes \cdots \otimes\Delta_{\{\iota_m,\iota_{m+1}\}, I_m})\\&=&\Delta_{J,K} \Delta_{I,J},
\end{eqnarray*}
as claimed. Therefore $\{(\m{A}_I,\Delta_{I,J})\,|\, I\subseteq J \in \cali{P}_{s,t}\}$ is an inductive system of $C^*$-algebras.
\end{proof}

\begin{definition}\label{Oct5} Let $\cali{A}=(\{\m{A}_{s,t}\}_{0<s<t}, \{\Delta_{r,s,t}\}_{0<r<s<t})$ be a $C^*$-subproduct system. For any two positive real numbers  $0<s < t$, we define the $C^*$-algebra \begin{eqnarray}\m{A}_{s,t}^\sharp=\limind\, \Bigl\{(\m{A}_I,\Delta_{I,J})\,|\, I,\,J \in \cali{P}_{s,t},\; I\subseteq J\Bigr\},\end{eqnarray}
as the $C^*$-inductive limit of the system $\{(\m{A}_I,\Delta_{I,J})\,|\, I,\,J \in \cali{P}_{s,t},\; I\subseteq J\}$. \end{definition}
We also consider the family $\{\Delta _I^\sharp\}_{I\in  \cali{P}_{s,t}}$ of connecting mappings  $\Delta _I^\sharp:\m{A}_I\to \m{A}_{s,t}^\sharp$, $I\in  \cali{P}_{s,t}$, associated with this inductive limit construction. Therefore the *-monomorphisms  $\Delta _I^\sharp$ satisfy the compatibility condition $$\Delta_{J}^\sharp\Delta_{I,J}=\Delta_I^\sharp,$$for all $I,\,J\in \cali{P}_{s,t}$, $I\subseteq J,$ and the norm-density condition $$\m{A}_{s,t}^\sharp=\overline{\bigcup_{I\in  \cali{P}_{s,t}}\Delta_I^\sharp(\m{A}_I)}^{\|\cdot\|}.$$ To ease the notation, if $I=\{s,t\}$ is the trivial partition, then we shall simply use the notation $\Delta _{s,t}^\sharp$ instead of $\Delta _{\{s,t\}}^\sharp$, whenever necessary.

\begin{theorem}\label{star-isomorphism theorem}Let $\cali{A}=\left(\{\m{A}_{s,t}\}_{0<s<t}, \{\Delta_{r,s,t}\}_{0<r<s<t}\right)$ be a $C^*$-subproduct system. 
For any real numbers $0<r<s<t$, there exists a $*$-isomorphism of $C^*$-algebras $\Delta_{r,s,t}^\sharp:\m{A}_{r,t}^\sharp\to \m{A}_{r,s}^\sharp\otimes \m{A}_{s,t}^\sharp$ so that 
the resulting system $\cali{A}^\sharp=\left(\{\m{A}_{s,t}^\sharp\}_{0<s<t},\,\{\Delta_{r,s,t}^\sharp\}_{0<r<s<t}\right)$ is a $C^*$-product system dilation of $\cali{A}$. 
\end{theorem}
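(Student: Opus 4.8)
The plan is to realize $\Delta_{r,s,t}^\sharp$ as the canonical identification obtained by first inserting the point $s$ into every partition of $[r,t]$, and then splitting the resulting tensor decomposition $\m{A}_I$ at $s$. The starting observation is that the sub-family
$$\cali{P}_{r,t}^{\,s}=\{I\in\cali{P}_{r,t}\st s\in I\}$$
is cofinal in $(\cali{P}_{r,t},\subseteq)$, since any $I\in\cali{P}_{r,t}$ is refined by $I\cup\{s\}\in\cali{P}_{r,t}^{\,s}$. By the permanence of $C^*$-inductive limits under passage to a cofinal sub-system, one then has $\m{A}_{r,t}^\sharp=\limind\{(\m{A}_I,\Delta_{I,J})\st I,\,J\in\cali{P}_{r,t}^{\,s}\}$.

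Next I would set up the order isomorphism $\cali{P}_{r,t}^{\,s}\cong\cali{P}_{r,s}\times\cali{P}_{s,t}$ given by $I\mapsto\left(I\cap[r,s],\,I\cap[s,t]\right)$, under which $\m{A}_I=\m{A}_{I\cap[r,s]}\otimes\m{A}_{I\cap[s,t]}$ and, by Lemma \ref{Aug17}(ii) applied with the splitting point equal to $s$, the connecting maps factor as $\Delta_{I,J}=\Delta_{I\cap[r,s],J\cap[r,s]}\otimes\Delta_{I\cap[s,t],J\cap[s,t]}$. Thus the inductive system indexed by $\cali{P}_{r,t}^{\,s}$ is precisely the tensor product of the two inductive systems defining $\m{A}_{r,s}^\sharp$ and $\m{A}_{s,t}^\sharp$. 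Since the injective $C^*$-tensor product commutes with inductive limits over directed sets whose connecting maps are injective $*$-homomorphisms, the canonical comparison map is a $*$-isomorphism
$$\Delta_{r,s,t}^\sharp:\m{A}_{r,t}^\sharp\xrightarrow{\ \cong\ }\m{A}_{r,s}^\sharp\otimes\m{A}_{s,t}^\sharp,$$
characterized on generators by $\Delta_{r,s,t}^\sharp\,\Delta_I^\sharp=\Delta_{I\cap[r,s]}^\sharp\otimes\Delta_{I\cap[s,t]}^\sharp$ for $I\in\cali{P}_{r,t}^{\,s}$. I expect this identification to be the main obstacle: one must check that the comparison map $\limind(\m{A}_{I'}\otimes\m{A}_{I''})\to\left(\limind\m{A}_{I'}\right)\otimes\left(\limind\m{A}_{I''}\right)$ is a genuine isometric $*$-isomorphism for the injective norm, and not merely a dense embedding, which is where the choice of the injective (spatial) tensor product is essential.

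For the co-associativity law I would restrict to the cofinal sub-family of partitions of $[r,u]$ containing both $s$ and $t$; every such $I$ decomposes as $I=I_1\cup I_2\cup I_3$ with $I_1\in\cali{P}_{r,s}$, $I_2\in\cali{P}_{s,t}$, $I_3\in\cali{P}_{t,u}$, so that $\m{A}_I=\m{A}_{I_1}\otimes\m{A}_{I_2}\otimes\m{A}_{I_3}$. On these generators both composites $(\op{id}\otimes\Delta_{s,t,u}^\sharp)\Delta_{r,s,u}^\sharp$ and $(\Delta_{r,s,t}^\sharp\otimes\op{id})\Delta_{r,t,u}^\sharp$ reduce to the identity re-bracketing of this triple tensor product; hence they agree on a dense subset and therefore coincide.

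Finally, to exhibit $\cali{A}^\sharp$ as a dilation of $\cali{A}$, I would take $\theta_{s,t}=\Delta_{s,t}^\sharp:\m{A}_{s,t}=\m{A}_{\{s,t\}}\to\m{A}_{s,t}^\sharp$, the connecting $*$-monomorphism of the trivial partition. Using the compatibility relation $\Delta_{r,t}^\sharp=\Delta_{\{r,s,t\}}^\sharp\,\Delta_{r,s,t}$ together with the above description of $\Delta_{r,s,t}^\sharp$ on the partition $\{r,s,t\}$, one computes $\Delta_{r,s,t}^\sharp\,\theta_{r,t}=(\theta_{r,s}\otimes\theta_{s,t})\Delta_{r,s,t}$, so that $\{\theta_{s,t}\}_{0<s<t}$ is a monomorphism of tensorial $C^*$-systems from $\cali{A}$ into the $C^*$-product system $\cali{A}^\sharp$.
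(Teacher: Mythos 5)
Your proposal is correct and takes essentially the same route as the paper's proof: the paper also passes to the cofinal family of partitions of $[r,t]$ containing $s$ (written there as $\cali{P}_{r,s}\lor\cali{P}_{s,t}$), uses the same order isomorphism with $\cali{P}_{r,s}\times\cali{P}_{s,t}$ and the factorization $\Delta_{I\cup J,I'\cup J'}=\Delta_{I,I'}\otimes\Delta_{J,J'}$, and then obtains $\Delta_{r,s,t}^\sharp$ from the compatibility of the families $\{\Delta_I^\sharp\otimes\Delta_J^\sharp\}$ together with norm-density of $\bigcup\bigl(\Delta_I^\sharp\otimes\Delta_J^\sharp\bigr)\left(\m{A}_I\otimes\m{A}_J\right)$ and the universal property of the inductive limit, which is precisely the commutation of the injective tensor product with inductive limits that you invoke. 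Your verifications of co-associativity on the dense union over triples of partitions and of the dilation property via $\Delta_{r,t}^\sharp=\Delta_{\{r,s,t\}}^\sharp\Delta_{r,s,t}$ coincide with the paper's computations.
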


\begin{proof}
Let $0<r<t$ be two given positive real numbers. For any real number $s$ with $0<r<s<t$, consider the family of partitions $$\cali{P}_{r,s}\lor \cali{P}_{s,t}=\{ I\cup J\,|\, I\in\cali{P}_{r,s}, J\in \cali{P}_{s,t}\}$$ of the interval $[r,t]$. We note that $\cali{P}_{r,s}\lor \cali{P}_{s,t}$ is a cofinal subset of the partially ordered set $(\cali{P}_{r,t},\subseteq)$. Consequently, the $C^*$-algebra $\m{A}_{r,t}^\sharp$ can be realized as the $C^*$-inductive limit of the inductive system $$\{(\m{A}_{I\cup J},\Delta_{I\cup J,I'\cup J'})\,|\, I\cup J,\,I'\cup J' \in \cali{P}_{r,s}\lor \cali{P}_{s,t},\; I\cup J\subseteq I'\cup J'\}.$$ Furthermore, we note that the mapping $\cali{P}_{r,s}\lor \cali{P}_{s,t}\ni I\cup J\mapsto (I, J)\in \cali{P}_{r,s}\times \cali{P}_{s,t}$ is an order isomorphism, where  $\cali{P}_{r,s}\times \cali{P}_{s,t}$ is endowed with the product order. Because $\m{A}_{I\cup J}=\m{A}_I\otimes\m{A}_J$ and $\Delta_{I\cup J,I^{\prime}\cup J^{\prime}}=\Delta_{I,I^{\prime}}\otimes \Delta_{J,J^{\prime}}$, for all $I,\, I^{\prime}\in\cali{P}_{r,s}$, $I\subseteq I^{\prime}$, and $J,\,J^{\prime}\in \cali{P}_{s,t}$, $J\subseteq J^{\prime}$, we infer that  the $C^*$-algebra $\m{A}_{r,t}^\sharp$ can also be realized as the $C^*$-inductive limit of the inductive system  $$\{(\m{A}_{I}\otimes \m{A}_{J} ,\Delta_{I,I^{\prime}}\otimes \Delta_{J, J^{\prime}} )\,|\, (I,J),\,(I^{\prime},J^{\prime}) \in \cali{P}_{r,s}\times\cali{P}_{s,t},\; I\subseteq I^{\prime},\;J\subseteq J^{\prime}\}.$$ Noticing that the families
 $\{\Delta_I^\sharp\otimes \Delta_J^\sharp\}_{ \cali{P}_{r,s}\times \cali{P}_{s,t}}$ and $\{\Delta_{I,I^{\prime}}\otimes \Delta_{J, J^{\prime}}\}_{ \cali{P}_{r,s}\times \cali{P}_{s,t}}$  are compatible, and that 
  the union of all range algebras $\Delta_I^\sharp\otimes \Delta_J^\sharp\left(\m{A}_I\otimes \m{A}_J\right)$ is norm-dense in $\m{A}_{r,s}^\sharp\otimes \m{A}_{s,t}^\sharp$, it follows from the universal property of the inductive limit that  there is a unique *-isomorphism $\Delta_{r,s,t}^\sharp:\m{A}_{r,t}^\sharp\rightarrow \m{A}_{r,s}^\sharp\otimes \m{A}_{s,t}^\sharp$ that satisfies the compatibility condition \begin{eqnarray}\label{compas}\Delta_{r,s,t}^\sharp\Delta_{I\cup J}^\sharp=\Delta_I^\sharp\otimes \Delta_J^\sharp,\end{eqnarray}
for all $I\in\cali{P}_{r,s}$, $J\in  \cali{P}_{s,t}$.

We claim that $\cali{A}^\sharp=\left(\{\m{A}_{s,t}^\sharp\}_{0<s<t},\,\{\Delta_{r,s,t}^\sharp\}_{0<r<s<t}\right)$ is a $C^*$-product system, i.e., that the family of *-monomorphisms $\{\Delta_{r,s,t}^\sharp\}_{0<r<s<t}$ satisfies the co-associativity law (\ref{Nov10d}). For this purpose, consider three arbitrary partitions  $I\in  \cali{P}_{r,s}$, $J\in \cali{P}_{s.t}$ and $K\in \cali{P}_{t,u}.$ Using  (\ref{compas}), we have
\begin{eqnarray*}
(\Delta_{r,s,t}^\sharp\otimes \op{id}_{\m{A}_{t,u}^\sharp})\Delta_{r,t,u}^\sharp \Delta_{I\cup J\cup K}^\sharp&=&(\Delta_{r,s,t}^\sharp\otimes \op{id}_{\m{A}_{t,u}^\sharp})(\Delta_{I\cup J}^\sharp\otimes \Delta_K^\sharp)\\&=&\Delta_I^\sharp\otimes \Delta_J^\sharp\otimes \Delta_K^\sharp
\end{eqnarray*}
 and similarly $(\op{id}_{\m{A}_{r,s}^\sharp}\otimes \Delta_{s,t,u}^\sharp)\Delta_{r,s,u}^\sharp \Delta_{I\cup J\cup K}^\sharp=\Delta_I^\sharp\otimes \Delta_J^\sharp\otimes \Delta_K^\sharp$. Because the set $\bigcup_{I,\,J,\,K}\Delta_{I\cup J\cup K}^\sharp (\m{A}_I\otimes \m{A}_J\otimes \m{A}_K)$ is everywhere dense in $\m{A}^\sharp_{t,u}$, the conclusion follows. 
 
 Finally, we show that the family $\{\Delta_{s,t}^\sharp\}_{0<s<t}$ is a monomorphism of tensorial $C^*$-systems, from $\cali{A}$ to $\cali{A}^\sharp$. For this, let $0<r<s<t$ be fixed real numbers. Applying (\ref{compas}) to the particular case $I=\{r,s\}\in \cali{P}_{r,s}$, $J=\{s,t\}\in \cali{P}_{s,t}$, we obtain that
$$\left(\Delta_{r,s}^\sharp\otimes \Delta_{s,t}^\sharp\right)\Delta_{r,s,t}=\Delta^\sharp_{r,s,t}\Delta^\sharp_{I\cup J}\Delta_{r,s,t}=\Delta^\sharp_{r,s,t}\Delta_{r,t}^\sharp,$$
as needed. The theorem is proved.
\end{proof}

\begin{definition}
 Let $\cali{A}=\left(\{\m{A}_{s,t}\}_{0<s<t}, \{\Delta_{r,s,t}\}_{0<r<s<t}\right)$ be a $C^*$-subproduct system. The $C^*$-product system dilation $$\cali{A}^\sharp=\left(\{\m{A}_{s,t}^\sharp\}_{0<s<t},\,\{\Delta_{r,s,t}^\sharp\}_{0<r<s<t}\right)$$ will be called the inductive dilation of $\cali{A}$. The monomorphism $\{\Delta_{s,t}^\sharp\}_{0<s<t}$ will be called the inductive embedding of $\cali{A}$ into $\cali{A}^\sharp$.
\end{definition}
\begin{observation}\label{hasmas}
We notice that the inductive dilation of a unital $C^*$-subproduct system $\cali{A}=\left(\{\m{A}_{s,t}\}_{0<s<t}, \{\Delta_{r,s,t}\}_{0<r<s<t}\right)$ is also unital. Indeed, suppose that $\{p_{s,t}\}_{0<s<t}$ is a unit of $\cali{A}$, and let $p_I=p_{\iota_0,\iota_1}\otimes p_{\iota_1, \iota_2}\otimes \cdots p_{\iota_m,\iota_{m+1}},$ for every partition $I\in \cali{P}_{s,t} $, $I=\{s=\iota_0<\iota_1<\iota_2<\,\dots<\iota_m<\iota_{m+1}=t\}$. Then $\Delta_{I, J}(p_I)=p_J$, for all $I,\, J\in \cali{P}_{s,t}$, $I\subseteq J$. By setting $p_{s,t}^\sharp=\Delta^\sharp_{s,t}(p_{s,t})$, for all $0<s<t$, we deduce that $\{p_{s,t}^\sharp\}_{0<s<t}$ is a unit of the $C^*$-product system $\cali{A}^\sharp$.\end{observation}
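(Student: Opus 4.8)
The plan is to manufacture a unit of $\cali{A}^\sharp$ directly from the given unit $\{p_{s,t}\}_{0<s<t}$ of $\cali{A}$ by transporting its component projections through the inductive-limit embeddings $\Delta_I^\sharp$. To each partition $I=\{s=\iota_0<\iota_1<\cdots<\iota_m<\iota_{m+1}=t\}\in\cali{P}_{s,t}$ I would associate the elementary tensor $p_I=p_{\iota_0,\iota_1}\otimes p_{\iota_1,\iota_2}\otimes\cdots\otimes p_{\iota_m,\iota_{m+1}}\in\m{A}_I$. Each $p_I$ is a non-zero projection, since every $p_{\iota_i,\iota_{i+1}}$ is and the minimal tensor product of non-zero projections remains a non-zero projection.

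The engine of the argument is the identity $\Delta_{I,J}(p_I)=p_J$ for all $I\subseteq J$ in $\cali{P}_{s,t}$. Using the factorisation $\Delta_{I,J}=\Delta_{\{\iota_0,\iota_1\},I_0}\otimes\cdots\otimes\Delta_{\{\iota_m,\iota_{m+1}\},I_m}$ of Definition \ref{Jan21}(b), this reduces to the single-interval assertion $\Delta_{\{s,t\},K}(p_{s,t})=p_K$ for an arbitrary $K\in\cali{P}_{s,t}$, which I would establish by induction on the number of subdivision points. The base case $m=1$ is precisely the unit relation $\Delta_{s,\iota_1,t}(p_{s,t})=p_{s,\iota_1}\otimes p_{\iota_1,t}$; for the inductive step I would first apply the unit relation to obtain $\Delta_{\iota_0,\iota_m,\iota_{m+1}}(p_{s,t})=p_{\iota_0,\iota_m}\otimes p_{\iota_m,\iota_{m+1}}$, then act by $\Delta_{\{\iota_0,\iota_m\},K\setminus\{\iota_{m+1}\}}\otimes\op{id}_{\iota_m,\iota_{m+1}}$ as in Definition \ref{Jan21}(a) and invoke the induction hypothesis on the left tensor factor.

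Granting $\Delta_{I,J}(p_I)=p_J$, the family $\{p_I\}_{I\in\cali{P}_{s,t}}$ is compatible with the connecting maps, so the relation $\Delta_J^\sharp\Delta_{I,J}=\Delta_I^\sharp$ yields $\Delta_J^\sharp(p_J)=\Delta_I^\sharp(p_I)$. Hence $p_{s,t}^\sharp:=\Delta_I^\sharp(p_I)$ does not depend on the choice of $I$, and taking the trivial partition $I=\{s,t\}$ gives $p_{s,t}^\sharp=\Delta_{s,t}^\sharp(p_{s,t})$; it is a non-zero projection because each $\Delta_I^\sharp$ is a $*$-monomorphism.

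It remains to check the unit relation $\Delta_{r,s,t}^\sharp(p_{r,t}^\sharp)=p_{r,s}^\sharp\otimes p_{s,t}^\sharp$ for $\cali{A}^\sharp$. Taking the partition $\{r,s,t\}=\{r,s\}\cup\{s,t\}$ of $[r,t]$, I would write $p_{r,t}^\sharp=\Delta_{\{r,s,t\}}^\sharp(p_{r,s}\otimes p_{s,t})$ and apply the compatibility relation (\ref{compas}) of $\Delta_{r,s,t}^\sharp$ with $I=\{r,s\}$ and $J=\{s,t\}$, so that $\Delta_{r,s,t}^\sharp\Delta_{\{r,s,t\}}^\sharp=\Delta_{r,s}^\sharp\otimes\Delta_{s,t}^\sharp$; evaluating on $p_{r,s}\otimes p_{s,t}$ then produces $\Delta_{r,s}^\sharp(p_{r,s})\otimes\Delta_{s,t}^\sharp(p_{s,t})=p_{r,s}^\sharp\otimes p_{s,t}^\sharp$, as required. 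The only genuine obstacle is the inductive verification of $\Delta_{\{s,t\},K}(p_{s,t})=p_K$, which is careful bookkeeping over the nested definition of the iterated co-multiplication; once that is secured, everything else is a formal consequence of the universal property of the inductive limit already established in Theorem \ref{star-isomorphism theorem}.
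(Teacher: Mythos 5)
Your proposal is correct and follows essentially the same route as the paper's own (sketched) argument: the same compatible family $p_I$, the same key identity $\Delta_{I,J}(p_I)=p_J$, and the same definition $p_{s,t}^\sharp=\Delta_{s,t}^\sharp(p_{s,t})$, with the unit relation for $\cali{A}^\sharp$ checked through the compatibility relation (\ref{compas}). The only difference is that you spell out the induction on partition points and the explicit evaluation of (\ref{compas}) on $p_{r,s}\otimes p_{s,t}$, details the paper leaves implicit in its Observation.
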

\begin{definition} The unit $\{p_{s,t}^\sharp\}_{0<s<t}$ will be referred to as the unit dilation of $\{p_{s,t}\}_{0<s<t}$.\end{definition}

The last proposition of this section shows that the construction of the inductive dilation is categorical, in the sense that any monomorphism of $C^*$-subproduct systems can be extended to a monomorphisms of their inductive dilations, respecting all the inductive structures considered.
\begin{proposition}\label{categorical theorem0}
Suppose that $\cali{A}=\left(\{\m{A}_{s,t}\}_{0<s<t}, \{\Delta_{r,s,t}\}_{0<r<s<t}\right)$ and $\cali{B}=\left(\{\m{B}_{s,t}\}_{0<s<t}, \{\Gamma_{r,s,t}\}_{0<r<s<t}\right)$ are $C^*$-subproduct systems. Let $\{\theta_{s,t}\}_{0<s<t}$ be a monomorphism, respectively an isomorphism, from $\cali{A}$ to $\cali{B}$. Then there exists a unique monomorphism, respectively isomorphism, $\{\theta_{s,t}^\sharp\}_{0<s<t}$ from $\cali{A}^\sharp=\left(\{\m{A}_{s,t}^\sharp\}_{0<s<t},\,\{\Delta_{r,s,t}^\sharp\}_{0<r<s<t}\right)$ to $\cali{B}^\sharp=\left(\{\m{B}_{s,t}^\sharp\}_{0<s<t},\,\{\Gamma_{r,s,t}^\sharp\}_{0<r<s<t}\right)$
that make the diagram
\begin{eqnarray}\label{Nov17a}
\xymatrix{
\m{A}_{s,t}^\sharp  \ar[r]^-{\theta_{s,t}^\sharp} & \m{B}^\sharp_{s,t} \\
\m{A}_{s,t} \ar[r]_-{\theta_{s,t}}\ar[u]^-{\Delta^\sharp_{s,t}} & \m{B}_{s,t}\ar[u]_-{\Gamma^\sharp_{s,t}} }
\end{eqnarray}
 commute, that is $\theta_{s,t}^\sharp \Delta_{s,t}^\sharp=\Gamma_{s,t}^\sharp \theta_{s,t},$ for all $0<s<t$. 
\end{proposition}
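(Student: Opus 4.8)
The plan is to construct the maps $\theta_{s,t}^\sharp$ by invoking the universal property of the inductive limit, using the morphism $\{\theta_{s,t}\}$ to build compatible maps at the level of the finite tensor-product algebras $\m{A}_I$. First I would define, for each partition $I=\{s=\iota_0<\iota_1<\dots<\iota_{m+1}=t\}\in\cali{P}_{s,t}$, the tensor-product $*$-homomorphism
\begin{eqnarray*}
\theta_I=\theta_{\iota_0,\iota_1}\otimes\theta_{\iota_1,\iota_2}\otimes\cdots\otimes\theta_{\iota_m,\iota_{m+1}}:\m{A}_I\to\m{B}_I,
\end{eqnarray*}
which is a $*$-monomorphism (respectively $*$-isomorphism) whenever each $\theta_{s,t}$ is, since the injective tensor product preserves injectivity and surjectivity of $*$-homomorphisms.

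The key step is to verify that the family $\{\theta_I\}_{I\in\cali{P}_{s,t}}$ intertwines the connecting maps of the two inductive systems, i.e. that $\Gamma_{I,J}\,\theta_I=\theta_J\,\Delta_{I,J}$ for all $I\subseteq J$. For the trivial partition this is precisely the morphism condition $\Gamma_{r,s,t}\theta_{r,t}=(\theta_{r,s}\otimes\theta_{s,t})\Delta_{r,s,t}$ from the definition of a morphism, and from it one deduces by induction on the number of subdivision points that $\Gamma_{\{s,t\},I}\,\theta_{s,t}=\theta_I\,\Delta_{\{s,t\},I}$, using the iterative definition of $\Delta_{\{s,t\},I}$ in Definition \ref{Jan21}(a) and the fact that tensoring with an identity map commutes with $\theta$. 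The general refinement case then follows by writing $\Delta_{I,J}$ and $\Gamma_{I,J}$ as tensor products of the building blocks $\Delta_{\{\iota_i,\iota_{i+1}\},I_i}$ and $\Gamma_{\{\iota_i,\iota_{i+1}\},I_i}$ (as in Definition \ref{Jan21}(b)) and applying the established relation in each tensor factor. Once compatibility holds, composing with the connecting $*$-monomorphisms $\Gamma_J^\sharp$ gives maps $\Gamma_J^\sharp\theta_J:\m{A}_J\to\m{B}_{s,t}^\sharp$ that are compatible with $\{\Delta_{I,J}\}$, so the universal property of $\m{A}_{s,t}^\sharp=\limind\{(\m{A}_I,\Delta_{I,J})\}$ yields a unique $*$-homomorphism $\theta_{s,t}^\sharp:\m{A}_{s,t}^\sharp\to\m{B}_{s,t}^\sharp$ with $\theta_{s,t}^\sharp\Delta_I^\sharp=\Gamma_I^\sharp\theta_I$ for every $I$; specializing to $I=\{s,t\}$ gives exactly the commutativity of diagram (\ref{Nov17a}).

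It remains to check that $\{\theta_{s,t}^\sharp\}$ is itself a morphism of $C^*$-product systems, i.e. that $\Gamma_{r,s,t}^\sharp\,\theta_{r,t}^\sharp=(\theta_{r,s}^\sharp\otimes\theta_{s,t}^\sharp)\Delta_{r,s,t}^\sharp$. I would verify this on the dense set $\bigcup_{I,J}\Delta_{I\cup J}^\sharp(\m{A}_I\otimes\m{A}_J)$ by applying the defining relation (\ref{compas}) for $\Delta_{r,s,t}^\sharp$ (and its analogue for $\Gamma_{r,s,t}^\sharp$) together with $\theta_{s,t}^\sharp\Delta_I^\sharp=\Gamma_I^\sharp\theta_I$ and the multiplicativity $\theta_{I\cup J}=\theta_I\otimes\theta_J$: both sides send $\Delta_{I\cup J}^\sharp(x)$ to $(\Gamma_I^\sharp\theta_I\otimes\Gamma_J^\sharp\theta_J)(x)$, and continuity extends the equality to all of $\m{A}_{r,t}^\sharp$. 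Injectivity (respectively bijectivity) of $\theta_{s,t}^\sharp$ follows because an inductive limit of injective (respectively bijective) connecting-compatible maps is injective (respectively bijective), the latter using that the range of $\theta_{s,t}^\sharp$ contains the dense set $\bigcup_I\Gamma_I^\sharp(\m{B}_I)$ when each $\theta_{s,t}$ is surjective.

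The main obstacle I anticipate is the inductive verification of the intertwining relation $\Gamma_{\{s,t\},I}\,\theta_{s,t}=\theta_I\,\Delta_{\{s,t\},I}$: the bookkeeping of which tensor factors carry identity maps and which carry $\theta$'s must be tracked carefully through the iterated definition of $\Delta_{\{s,t\},I}$, and the inductive step hinges on the naturality relation $(\theta\otimes\mathrm{id})(\Delta\otimes\mathrm{id})=(\mathrm{id}_{\m{B}}\otimes\cdots)\cdots$ unwinding correctly against the co-associativity already used to define these maps. Everything else is a routine application of the universal property and density arguments.
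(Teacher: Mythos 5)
The existence half of your argument is correct and follows essentially the same route as the paper: the same tensor-product maps $\theta_I$, the same compatibility relation $\theta_J\Delta_{I,J}=\Gamma_{I,J}\theta_I$ (which the paper asserts as ``readily seen'' and you justify by induction), the same appeal to the universal property of $\m{A}_{s,t}^\sharp$, and the same dense-range computation with (\ref{compas}) to verify that $\{\theta^\sharp_{s,t}\}$ intertwines $\Delta^\sharp_{r,s,t}$ and $\Gamma^\sharp_{r,s,t}$; your isometry/closed-range argument for injectivity and surjectivity is also fine.

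However, there is a genuine gap: you never prove the uniqueness asserted in the proposition. The uniqueness you extract from the universal property is uniqueness of the map satisfying the \emph{strong} family of constraints $\theta^\sharp_{s,t}\Delta^\sharp_I=\Gamma^\sharp_I\theta_I$ for \emph{all} partitions $I\in\cali{P}_{s,t}$, whereas the proposition claims that $\{\theta^\sharp_{s,t}\}$ is the unique morphism of tensorial $C^*$-systems from $\cali{A}^\sharp$ to $\cali{B}^\sharp$ making only the single square (\ref{Nov17a}) commute, i.e.\ satisfying the constraint for the trivial partition $I=\{s,t\}$ alone. A competing morphism $\{\eta^\sharp_{s,t}\}$ is not assumed to satisfy the strong constraints, so the universal property cannot be applied to it directly; one must first show that the weak hypothesis forces the strong one. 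This is exactly where the assumption that $\{\eta^\sharp_{s,t}\}$ intertwines the co-multiplications must be used: for instance, for $I=\{s,\iota,t\}$, invertibility of $\Gamma^\sharp_{s,\iota,t}$ lets one write $\eta^\sharp_{s,t}=\left(\Gamma^\sharp_{s,\iota,t}\right)^{-1}\left(\eta^\sharp_{s,\iota}\otimes\eta^\sharp_{\iota,t}\right)\Delta^\sharp_{s,\iota,t}$, and then (\ref{compas}) together with the commuting square for $\eta^\sharp$ gives $\eta^\sharp_{s,t}\Delta^\sharp_I=\left(\Gamma^\sharp_{s,\iota,t}\right)^{-1}\left(\eta^\sharp_{s,\iota}\Delta^\sharp_{s,\iota}\otimes\eta^\sharp_{\iota,t}\Delta^\sharp_{\iota,t}\right)=\left(\Gamma^\sharp_{s,\iota,t}\right)^{-1}\left(\Gamma^\sharp_{s,\iota}\theta_{s,\iota}\otimes\Gamma^\sharp_{\iota,t}\theta_{\iota,t}\right)=\Gamma^\sharp_I\theta_I$, after which the relation propagates to all partitions and forces $\eta^\sharp_{s,t}=\theta^\sharp_{s,t}$ on the dense set $\bigcup_I\Delta^\sharp_I(\m{A}_I)$. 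This is precisely the closing argument of the paper's proof, and without it (or an equivalent) the uniqueness claim remains unproved.
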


\begin{proof} For any two real numbers $0<s<t$ and any partition $I=\{s=\iota_0<\iota_1<\iota_2<\,\dots<\iota_m<\iota_{m+1}=t\}\in \m{P}_{s,t}$, consider the associated tensor product *-monomorphism $\theta_I:\m{A}_I\to\m{B}_I$,  $
\theta_I=\theta_{\iota_0,\iota_1}\otimes \theta_{\iota_1,\iota_2}\otimes \cdots\otimes \theta_{\iota_m,\iota_{m+1}}.$ It is readily seen that the families $\{\theta_I\}_{I\in\cali{P}_{s,t}}$, $\{\Delta_{I,J}\}$ and $\{\Gamma_{I,J}\}$ are compatible, in the sense that
\begin{eqnarray}\label{rok}
\theta_J\Delta_{I,J}=\Gamma_{I,J}\theta_I,
\end{eqnarray}
 for all $I,\,J\in \cali{P}_{s,t}$, $I\subseteq J$. As a result, there exists a unique *-monomorphism $\theta^\sharp_{s,t}:\m{A}_{s,t}^\sharp\to \m{B}_{s,t}^\sharp$ such that \begin{eqnarray}\label{Nov17b}\theta^\sharp_{s,t}\Delta^\sharp_I=\Gamma^\sharp_I\theta_I,\end{eqnarray} for all $I\in\cali{P}_{s,t}$ and $0<s<t$. Because  \begin{eqnarray*}\Gamma_{r,s,t}^\sharp\theta_{r,t}^\sharp \Delta^\sharp_{I\cup J}&=& \Gamma_{r,s,t}^\sharp\Gamma^\sharp_{I\cup J}\theta_{I\cup J}=\Gamma_I^\sharp\theta_I\otimes \Gamma_J^\sharp\theta_J
 =\theta_{r,s}^\sharp\Delta_I^\sharp\otimes \theta_{s,t}^\sharp\Delta_J^\sharp
\\&=&(\theta_{r,s}^\sharp\otimes \theta_{s,t}^\sharp) \Delta_{r,s,t}^\sharp \Delta^\sharp_{I\cup J},\end{eqnarray*} for all real numbers $0<r<s<t$ and all partitions $I\in\cali{P}_{r,s}$, $J\in  \cali{P}_{s,t}$, we deduce that 
  $\{\theta_{s,t}^\sharp\}_{0<s<t}$ is a monomorphism of tensorial $C^*$-systems. It is clear that $\{\theta_{s,t}^\sharp\}_{0<s<t}$ is an isomorphism if $\{\theta_{s,t}\}_{0<s<t}$ is itself an isomorphism.
  
Regarding uniqueness, suppose that $\{\eta_{s,t}^\sharp\}_{0<s<t}$ is another monomorphism from $\cali{A}^\sharp$ to $\cali{B}^\sharp$ that satisfies (\ref{Nov17a}). We will show that $\{\eta_{s,t}^\sharp\}_{0<s<t}$ must satisfy (\ref{Nov17b}) and, consequently, this leads to $\theta^\sharp_{s,t}=\eta^\sharp_{s,t}$, for all $0<s<t$. It suffices to show that  $\{\eta_{s,t}^\sharp\}_{0<s<t}$ satisfies (\ref{Nov17b}) for any partition of the form $I=\{s,\iota,t\}$, where $0<s<\iota<t$. For this purpose, we write $I=\{s,\iota\}\cup\{\iota, t\}$ and use (\ref{compas}) to obtain that \begin{eqnarray*}
  \eta^\sharp _{s,t}\Delta^\sharp _I&=&\left( \Gamma_{s,\iota,t}^\sharp\right)^{-1}\left(\eta_{s,\iota}^\sharp\otimes  \eta_{\iota,t}^\sharp\right)\Delta_{s,\iota, t}^\sharp\Delta^\sharp_{\{s,\iota\}\cup\{\iota, t\}}\\&=&\left( \Gamma_{s,\iota,t}^\sharp\right)^{-1}\left(\eta_{s,\iota}^\sharp\Delta^\sharp_{s,\iota}\otimes  \eta_{\iota,t}^\sharp\Delta_{\iota, t}^\sharp\right)\\&=&\left( \Gamma_{s,\iota,t}^\sharp\right)^{-1}\left(\Gamma_{s,\iota}^\sharp \theta_{s,\iota}\otimes  \Gamma_{\iota,t}^\sharp \theta_{\iota,t}\right)=\Gamma^\sharp_I\theta_I.\end{eqnarray*}
 The proof is now complete.
\end{proof}
We conclude this section with a concrete description of the inductive dilation of a $C^*$-subproduct system of commutative unital $C^*$-algebras.
\begin{example}\label{examp11}
Consider, as in  Example \ref{examp1}, a two-parameter multiplicative system of compact Hausdorff spaces $\left(\{X_{s,t}\}_{0<s<t}, \{\chi_{r,s,t}\}_{0<r<s<t} \right)$ chosen so that every function $\chi_{r,s,t}$ is surjective. For any two positive real numbers $0<s<t$ and any partition $I=\{s=\iota_0<\iota_1<\iota_2<\,\dots<\iota_m<\iota_{m+1}=t\}\in \cali{P}_{s,t}$, let $X_I=
X_{\iota_0, \iota_1}\times X_{\iota_1,\iota_2}\times \dots\times X_{\iota_m,\iota_{m+1}}$ be the cartesian product of the spaces  $X_{\iota_k,\iota_{k+1}}$, and  $\chi_{\{s,t\},I}:X_{I}\to X_{s,t}$ be the continuous surjection defined iteratively according to the model used in Definition \ref{Jan21}, i.e.,
$$\chi_{\{s,t\},I}=\left\{\begin{array}{llll}\chi_{\iota_0,\iota_1,\iota_2}, &\mbox{if}\;m=1\\\chi_{\iota_0,\iota_m,\iota_{m+1}}\left(\chi_{\{\iota_0,\iota_m\},I\setminus\{\iota_{m+1}\}}\times \op{id}_{\iota_m,\iota_{m+1}}\right),
& \mbox{if}\; m\geq 2
\end{array}\right.$$ 
If $J\in \cali{P}_{s,t}$ is a  refinement of $I$, decomposed as $J=I_0\cup \ldots \cup I_m$ with respect to $I$, then let $\chi_{I,J}:X_J\rightarrow X_I$ be the continuous surjection \begin{eqnarray}\label{June16}\chi_{I,J}=\chi_{\{\iota_0,\iota_1\}, I_0}\times \chi_{\{\iota_1,\iota_2\}, I_1}\times \cdots \chi_{\{\iota_m,\iota_{m+1}\}, I_m}.\end{eqnarray}
As in Proposition \ref{lemma inductive limit-c}, it can be shown that the system $
\Bigl\{(X_I,\chi_{I,J})\,|\, I,\,J \in \cali{P}_{s,t},\; I\subseteq J \Bigr\}
$ is a projective system of compact Hausdorff spaces over the partially ordered set $(\cali{P}_{s,t}, \subseteq)$, for all $0<s<t$. Its projective limit  $X_{s,t}^\sharp=\limproj \Bigl\{(X_I,\chi_{I,J})\,|\,I,\,J \in \cali{P}_{s,t},\; I\subseteq J\Bigr\}$ is a non-empty compact Hausdorff space.

Reasoning as in the proof of Theorem \ref{star-isomorphism theorem}, for any real numbers $0<r<s<t$, one can find a homeomorphism $\chi_{r,s,t}^\sharp: X_{r,s}^\sharp\times X_{s,t}^\sharp\to X_{r,t}^\sharp$ uniquely determined by the condition $(\chi_{I\cup J}^\sharp)\comp (\chi_{r,s,t}^\sharp)=\chi_I^\sharp\times \chi_J^\sharp$
for all $I\in\cali{P}_{r,s}$, $J\in  \cali{P}_{s,t}$. The system $\left(\{X_{s,t}^\sharp\}_{0<s<t},\,\{\chi_{r,s,t}^\sharp\}_{0<r<s<t}\right)$ is a two-parameter multiplicative system of compact Hausdorff spaces, and the associate system of commutative $C^*$-algebras $\cali{A}_{\op{com}}^\sharp=\left(\{C(X_{s,t}^\sharp)\}_{0<s<t},\,\{\Delta_{r,s,t}^\sharp\}_{0<r<s<t}\right)$, where $\Delta_{r,s,t}^\sharp$ is the *-homomorphism induced by $\chi_{r,s,t}^\sharp$, is a $C^*$-product system that is isomorphic to the inductive dilation of the unital $C^*$-subproduct system $\cali{A}_{\op{com}}=\left(\{C(X_{s,t})\}_{0<s<t}, \{\Delta_{r,s,t}\}_{0<r<s<t}\right)$ of $\left(\{X_{s,t}\}_{0<s<t}, \{\chi_{r,s,t}\}_{0<r<s<t} \right)$.
\end{example}

\section{The $C^*$-algebra of a unital $C^*$-subproduct system}\label{sec4.1}
In this section, we consider the partially ordered set $\cali{P}=\bigcup_{0<s<t}\cali{P}_{s,t}$ of all finite ordered subsets $I$ of $(0,\infty)$, $|I|\geq 2$, ordered by inclusion. Similar to decomposition (\ref{decoopa}), for any partition $I\in \cali{P}$ of the form $I=\{s=\iota_0<\iota_1<\iota_2<\,\dots<\iota_{m+1}=t\}$, and any refinement $J\in\cali{P}$ of $I$, one can decompose $J$ as \begin{eqnarray}\label{decoop}J=\underline{I}\cup I_0\cup\cdots\cup I_m\cup \overline{I}=\underline{I}\cup I^{\times}\cup \overline{I}\end{eqnarray} where the terminal partitions $\underline{I}$ and $\overline{I}$ are given by $\underline{I}=\{j \in J: j \leq \iota_0\}$, $\overline{I}=\{j\in J: \iota_{m+1}\leq j\},$ and the middle partition $I^{\times}$ is defined as  $I^{\times}= I_1\cup\cdots\cup I_m\in\cali{P}_{s,t}$, where 
for each $0\leq i\leq m$, $I_i=\{j\in J, \iota_i\leq j\leq \iota_{i+1}\}=\{\iota_i=\iota_{i_0}<\iota_{i_1}<\dots<\iota_{i_{n_{I_i}}}<\iota_{i+1}\}\in \cali{P}_{\iota_i,\iota_{i+1}},$ for some integer $n_{I_i}\in\mathbb{N}$ depending on the set $I_i$. We note, on this occasion, that the partition $I^{\times}$ is a refinement of $I.$

If $\cali{A}=\left(\{\m{A}_{s,t}\}_{0<s<t}, \{\Delta_{r,s,t}\}_{0<r<s<t}\right)$ is a unital $C^*$-subproduct system with unit $\{p_{s,t}\}_{0<s<t}$, then the partition decomposition (\ref{decoop}) leads to the corresponding tensorial decomposition of $C^*$-algebras $$
\m{A}_J=\m{A}_{\underline{I}}\otimes \m{A}_{I^{\times}}\otimes \m{A}_{\overline{I}}=\m{A}_{\underline{I}}\otimes \m{A}_{I_0}\otimes \m{A}_{I_1}\otimes\dots\otimes  \m{A}_{I_m}\otimes \m{A}_{\overline{I}},$$ and of projections $$
p_J=p_{\underline{I}}\otimes p_{I^{\times}}\otimes p_{\overline{I}}$$
for all $I\subseteq J\in\cali{P}$, where $p_I\in\m{A}_I$ are defined as in Observation \ref{hasmas}. Note that if $I_0$, or $I_m$, contains just one point (the end point), they will be ignored. 

\begin{proposition}\label{lemma inductive limit} Let $\cali{A}=\left(\{\m{A}_{s,t}\}_{0<s<t}, \{\Delta_{r,s,t}\}_{0<r<s<t}\right)$ be a unital $C^*$-subproduct system and $\{p_{s,t}\}_{0<s<t}$ be a unit of $\cali{A}$.  For any two partitions $I,\,J\in\cali{P}$, $I\subseteq J$, consider the *-monomorphism $\Delta_{I,J}^\times: \m{A}_I\to\m{A}_J$, 
\begin{eqnarray}\Delta_{I,J}^\times (x)=\left\{\begin{array}{ll}\Delta_{I,J}(x), &\mbox{if}\;I,J\in \cali{P}_{s,t},\;s,t>0\\
{p}_{\underline{I}}\otimes \Delta_{I,I^{\times}}(x) \otimes {p}_{\overline{I}},
 &\mbox{otherwise}\end{array}\right.
\end{eqnarray} for all $x\in \m{A}_I.$
Then the system
\[
\Bigl\{(\m{A}_I,\Delta_{I,J}^\times)\,|\,I,\,J \in \cali{P},\; I\subseteq J\Bigr\}
\]
is an inductive system of $C^*$-algebras over the partially ordered set $(\cali{P}, \subseteq)$.
\end{proposition}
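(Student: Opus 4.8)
The plan is to check the two conditions that make a family of $*$-homomorphisms an inductive system over $(\cali{P},\subseteq)$: that each $\Delta_{I,J}^\times$ is a well-defined $*$-monomorphism with $\Delta_{I,I}^\times=\op{id}_{\m{A}_I}$, and that the compatibility relation $\Delta_{I,K}^\times=\Delta_{J,K}^\times\Delta_{I,J}^\times$ holds for all $I\subseteq J\subseteq K$ in $\cali{P}$. The first condition is essentially formal; the entire content lies in the compatibility relation, whose proof reduces to the within-interval machinery of Proposition \ref{lemma inductive limit-c}, Lemma \ref{Aug17} and Observation \ref{hasmas}.

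First I would record that the two branches of the definition can be written uniformly. For $I\in\cali{P}_{s,t}$ and any refinement $J$, set $\underline{I}=\{j\in J:j\le s\}$, $\overline{I}=\{j\in J:j\ge t\}$ and $I^\times=J\cap[s,t]$; then $\Delta_{I,J}^\times(x)=p_{\underline{I}}\otimes\Delta_{I,I^\times}(x)\otimes p_{\overline{I}}$, with the convention that a terminal factor is suppressed when the corresponding set reduces to a single point. When $I,J\in\cali{P}_{s,t}$ both $\underline{I}$ and $\overline{I}$ are trivial and this reduces to $\Delta_{I,J}$, so the two branches agree; taking $J=I$ gives $\Delta_{I,I}^\times=\op{id}$. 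Injectivity of $\Delta_{I,J}^\times$ follows because $\Delta_{I,I^\times}$ is a $*$-monomorphism (Definition \ref{Jan21}) and, the projections $p_{\underline{I}},p_{\overline{I}}$ being nonzero, the map $y\mapsto p_{\underline{I}}\otimes y\otimes p_{\overline{I}}$ is isometric on the spatial tensor product.

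The heart of the argument is the transitivity relation. Fix $I\subseteq J\subseteq K$ with $I\in\cali{P}_{s,t}$, and write $s_J=\min J$, $t_J=\max J$, so that $J^\times=K\cap[s_J,t_J]$. I would apply $\Delta_{J,K}^\times$ to $\Delta_{I,J}^\times(x)=p_{\underline{I}}\otimes\Delta_{I,I^\times}(x)\otimes p_{\overline{I}}$, which amounts to evaluating $\Delta_{J,J^\times}$ on that element. Since $s$ and $t$ are points of $J$ lying in $(s_J,t_J)$, Lemma \ref{Aug17}(ii) applied twice, at $s$ and then at $t$, splits $\Delta_{J,J^\times}$ as the tensor product $\Delta_{\underline{I},\,J^\times\cap[s_J,s]}\otimes\Delta_{I^\times,\,J^\times\cap[s,t]}\otimes\Delta_{\overline{I},\,J^\times\cap[t,t_J]}$. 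Evaluating factor by factor, the two outer refinement maps send $p_{\underline{I}}$ and $p_{\overline{I}}$ to $p_{J^\times\cap[s_J,s]}$ and $p_{J^\times\cap[t,t_J]}$ by Observation \ref{hasmas}, while the middle factor composes via Proposition \ref{lemma inductive limit-c} to give $\Delta_{I^\times,\,J^\times\cap[s,t]}\Delta_{I,I^\times}=\Delta_{I,\,J^\times\cap[s,t]}$. Tensoring with the outermost projections $p_{\underline{J}},p_{\overline{J}}$ then yields
\[
\Delta_{J,K}^\times\Delta_{I,J}^\times(x)=p_{\underline{J}}\otimes p_{J^\times\cap[s_J,s]}\otimes\Delta_{I,\,J^\times\cap[s,t]}(x)\otimes p_{J^\times\cap[t,t_J]}\otimes p_{\overline{J}}.
\]

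It then remains to match this against $\Delta_{I,K}^\times(x)=p_{\underline{I}'}\otimes\Delta_{I,I'^\times}(x)\otimes p_{\overline{I}'}$, where the decomposition of $K$ with respect to $I$ reads $\underline{I}'=K\cap(0,s]$, $I'^\times=K\cap[s,t]$, $\overline{I}'=K\cap[t,\infty)$. The decisive combinatorial identities are that $J^\times\cap[s,t]=K\cap[s,t]=I'^\times$, so the middle maps coincide, and that $\underline{I}'$ splits at $s_J$ as $\underline{J}\cup(J^\times\cap[s_J,s])$ while $\overline{I}'$ splits at $t_J$ as $(J^\times\cap[t,t_J])\cup\overline{J}$; since the projection attached to a partition factorizes through any interior point, these give $p_{\underline{I}'}=p_{\underline{J}}\otimes p_{J^\times\cap[s_J,s]}$ and $p_{\overline{I}'}=p_{J^\times\cap[t,t_J]}\otimes p_{\overline{J}}$, which completes the match. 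I expect the only real obstacle to be the bookkeeping: keeping straight the two distinct decompositions of $K$ (relative to $I$ and relative to $J$), and correctly suppressing tensor factors in the degenerate situations where one or more of $\underline{I},\overline{I},\underline{J},\overline{J}$ collapses to a single point, since then the corresponding splitting point of Lemma \ref{Aug17}(ii) is absent and the factor must be omitted on both sides simultaneously. All of the analytic input is supplied by Lemma \ref{Aug17}, Observation \ref{hasmas} and Proposition \ref{lemma inductive limit-c}.
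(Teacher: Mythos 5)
Your proof is correct and takes essentially the same route as the paper's: both rest on splitting the refinement map at the interior points $s$ and $t$ via Lemma \ref{Aug17}(ii), on the fact that refinement maps carry the unit projections $p_I$ to $p_J$, on Proposition \ref{lemma inductive limit-c} for the middle tensor factor, and on matching the decompositions of $K$ relative to $I$ and relative to $J$. The only difference is organizational: the paper first proves the case where $J$ and $K$ share endpoints and then reduces the general case to it, whereas you inline that special case into a single three-factor computation.
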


\begin{proof} We will show that the compatibility condition
\begin{eqnarray}\label{indo}\Delta_{I,K}^\times=\Delta_{J,K}^\times \Delta_{I,J}^\times,\end{eqnarray} is satisfied for all partitions  $I,\,J,\,K\in\cali{P}$, $I\subseteq J\subseteq K$. For this purpose, we first show that (\ref{indo}) is satisfied when $J,\,K\in \cali{P}_{u,v}$ and $I\in \cali{P}_{s,t}$, where $0<u\leq s<t\leq v$. It is clear that  (\ref{indo}) holds true if $u=s$, or $t=v$, or $u=s$ and $t=v$, so we can assume that $0<u< s<t< v$. Let $J=\underline{I}_J\cup I^{\times}_J\cup \overline{I}_J$ be the decomposition of $J$ with respect to $I\subseteq J$, as in (\ref{decoop}), and $K=\underline{I}_K\cup I^{\times}_K\cup \overline{I}_K$ be the decomposition  of $K$ with respect to $I\subseteq K$. We notice that $\underline{I}_J\subseteq \underline{I}_K,$ $\overline{I}_J\subseteq \overline{I}_K$, and $I\subseteq I^{\times}_J\subseteq I^{\times}_K.$ Using Lemma \ref{Aug17} (ii), we deduce that $\Delta_{J,K}=\Delta_{\underline{I}_J, \underline{I}_K}\otimes \Delta_{ I^{\times}_J,  I^{\times}_K}\otimes \Delta_{\overline{I}_J,  \overline{I}_K}.$
Consequently, for any $x\in\m{A}_I$, we have \begin{eqnarray*}\Delta^\times_{J,K}\Delta^\times_{I,J}(x)&=&\Delta_{J,K}\left({p}_{\underline{I}_J}\otimes \Delta_{I,I^{\times}_J}(x)\otimes {p}_{\overline{I}_J}\right)
\\&=&\Delta_{\underline{I}_J, \underline{I}_K}({p}_{\underline{I}_J})\otimes \Delta_{ I^{\times}_J,  I^{\times}_K}\Delta_{I,I^{\times}_J}(x)\otimes \Delta_{\overline{I}_J,  \overline{I}_K}({p}_{\overline{I}_J})
=\Delta^\times_{I,K}(x),\end{eqnarray*} as required.

We can now prove (\ref{indo}) in full generality. For this, let $I\in \cali{P}_{s,t}$ and $J\in \cali{P}_{u,v}$, where $0<u\leq s<t\leq v$. Since (\ref{indo}) can be easily checked when $u=s$, or $t=v$, or $u=s$ and $t=v$, we assume that $0<u<s<t<v$. In this case, we consider the decomposition  $J=\underline{I}_J\cup I_J^{\times}\cup \overline{I}_J$ of $J$ with respect to $I\subseteq J$, the decomposition $K=\underline{J}_K\cup J_K^{\times}\cup \overline{J}_K$ of $K$ with respect to $J\subseteq K$, as well the decomposition $K=\underline{I}_K\cup I_K^{\times}\cup \overline{I}_K$ of $K$ with respect to $I\subseteq K$. We notice that $I,\,I_J^{\times},\, I_K^{\times}\in \cali{P}_{s,t}$ satisfy $I\subseteq I_J^{\times}\subseteq I_K^{\times}$, while $J,\, J_K^{\times}\in \cali{P}_{u,v}$ satisfy $J\subseteq J_K^{\times}.$ In particular, $I\subseteq J_K^{\times}$, and the partitions in the corresponding decomposition $J_K^{\times}= \underline{I}_{ J_K^{\times}}\cup I^{\times}_{ J_K^{\times}}\cup \overline{I}_{ J_K^{\times}}$ are simply given by $\underline{I}_{ J_K^{\times}}=K\cap[u,s],$ 
$I^{\times}_{ J_K^{\times}}=I^{\times}_K,$ and $\overline{I}_{ J_K^{\times}}=K\cap[t,v].$
Consequently, $\underline{I}_K= \underline{J}_K\cup \underline{I}_{ J_K^{\times}}$ and $\overline{I}_K=\overline{J}_K\cup \overline{I}_{ J_K^{\times}}.$ Using all this, we have
\begin{eqnarray*}\Delta_{J,K}^\times\Delta^\times_{I,J}(x)&=&
={p}_{\underline{J}_K}\otimes \Delta_{J,J_K^{\times}}^\times\Delta^\times_{I,J}(x)\otimes {p}_{\overline{J}_K}
\\&=&p_{\underline{J}_K}\otimes \Delta^\times_{I,J_K^{\times}}(x)\otimes {p}_{\overline{J}_K}
=p_{\underline{I}_K}\otimes \Delta_{I,I_K^{\times}}(x)\otimes {p}_{\overline{I}_K}\\&=&\Delta^\times_{I,K}(x),
\end{eqnarray*}
for all $x\in\m{A}_I$. The proposition is proved. 
\end{proof}

\begin{definition} \label{deff} The $C^*$-algebra $C^*_{\{ p_{s,t} \}}(\cali{A})$ of a unital $C^*$-subproduct system  $\cali{A}=\left(\{\m{A}_{s,t}\}_{0<s<t}, \{\Delta_{r,s,t}\}_{0<r<s<t}\right)$ with respect to a unit $\{p_{s,t}\}_{0<s<t}$ is defined as the $C^*$-inductive limit
  \begin{eqnarray*}C^*_{\{ p_{s,t} \}}(\cali{A})=\limind\, \Bigl\{(\m{A}_I,\Delta^\times_{I,J})\,|\,I,\,J\in\cali{P},\; I\subseteq J\Bigr\}\end{eqnarray*}
of the inductive system $\{(\m{A}_I,\Delta^\times_{I,J})\,|\, ,I,\,J\in\cali{P},\; I\subseteq J\}$ constructed above.
  \end{definition}
   Let $\{\Delta^\times_I\}_{I\in\cali{P}}$ be the family of connecting *-monomorphisms $\Delta _I^\times:\m{A}_I\to C^*_{\{p_{s,t} \}}(\cali{A})$ associated with the inductive limit construction. If $I$ has only two elements, say $I=\{s,t\}$ where $s<t$, then we shall simply write $\Delta_{s,t}^\times$ instead of $\Delta_{\{s,t\}}^\times$. Both $\Delta^\times_{I,J}$ and $\Delta^\times_I$ depend on $\{p_{s,t}\}_{0<s<t}$ but, for ease of notation, we do not write this explicitly. 

We note that if $\m{A}_{s,t}$ are all unital $C^*$-algebras and $\{p_{s,t}\}_{0<s<t}$ is the trivial unit of $\cali{A}$, i.e.,  $p_{s,t}=1_{\m{A}_{s,t}}$, for all $0<s<t$, then the associated $C^*$-algebra $C^*_1(\cali{A})$ is quasi-local, in the sense of \cite{Haag,BR}.

The following proposition shows that any monomorphism of unital $C^*$-subproduct systems induces a *-monomorphism between their associate $C^*$-algebras.
\begin{proposition}\label{categorical theorem}Suppose that $\cali{A}=\left(\{\m{A}_{s,t}\}_{0<s<t}, \{\Delta_{r,s,t}\}_{0<r<s<t}\right)$ and $\cali{B}=\left(\{\m{B}_{s,t}\}_{0<s<t}, \{\Gamma_{r,s,t}\}_{0<r<s<t}\right)$ are unital $C^*$-subproduct systems with units $\{p_{s,t}\}_{0<s<t}$ and $\{q_{s,t}\}_{0<s<t}$ respectively. Then for any monomorphism $\{\theta_{s,t}\}_{0<s<t}$ from $\cali{A}$ to $\cali{B}$ satisfying $\theta_{s,t}(p_{s,t})=q_{s,t}$, for all $0<s<t$, there exists a unique *-monomorphism of $C^*$-algebras $\theta:C^*_{\{p_{s,t}\}}(\cali{A})\to C^*_{\{q_{s,t}\}}(\cali{B})$ such that \begin{eqnarray}\label{Nov12a}\theta \Delta_{s,t}^\times=\Gamma_{s,t}^\times \theta_{s,t},\end{eqnarray} for all $0<s<t$.
\end{proposition}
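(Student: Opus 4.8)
The plan is to reproduce the functoriality argument of Proposition \ref{categorical theorem0}, but now over the larger index set $\cali{P}$ of Section \ref{sec4.1} and with the projection-padded connecting maps $\Delta^\times_{I,J}$ and $\Gamma^\times_{I,J}$ in place of $\Delta_{I,J},\Gamma_{I,J}$. For each partition $I=\{s=\iota_0<\iota_1<\cdots<\iota_{m+1}=t\}\in\cali{P}$ I would introduce the tensor-product $*$-monomorphism $\theta_I=\theta_{\iota_0,\iota_1}\otimes\theta_{\iota_1,\iota_2}\otimes\cdots\otimes\theta_{\iota_m,\iota_{m+1}}:\m{A}_I\to\m{B}_I$, exactly as in Proposition \ref{categorical theorem0}. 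The whole proposition then reduces to establishing the single compatibility identity
\[
\theta_J\,\Delta^\times_{I,J}=\Gamma^\times_{I,J}\,\theta_I,\qquad\text{for all } I\subseteq J \text{ in } \cali{P},
\]
after which the induced map is produced by the universal property of the $C^*$-inductive limit of Definition \ref{deff}.

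To verify this identity I would split into the two branches in the definition of $\Delta^\times_{I,J}$. When $I$ and $J$ lie in the same $\cali{P}_{s,t}$, the map is the unpadded $\Delta_{I,J}$ and the required identity is precisely relation (\ref{rok}) already established inside the proof of Proposition \ref{categorical theorem0}. In the remaining (padded) case, writing the decomposition $J=\underline{I}\cup I^{\times}\cup\overline{I}$, one has $\Delta^\times_{I,J}(x)=p_{\underline{I}}\otimes\Delta_{I,I^{\times}}(x)\otimes p_{\overline{I}}$; applying $\theta_J$ slotwise yields $\theta_{\underline{I}}(p_{\underline{I}})\otimes\theta_{I^{\times}}\Delta_{I,I^{\times}}(x)\otimes\theta_{\overline{I}}(p_{\overline{I}})$. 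Here the hypothesis $\theta_{s,t}(p_{s,t})=q_{s,t}$ collapses the two flanking factors to $q_{\underline{I}}$ and $q_{\overline{I}}$ (since $\theta_{\underline{I}}$ and $\theta_{\overline{I}}$ are the corresponding tensor products of the maps $\theta_{\cdot,\cdot}$), while (\ref{rok}) converts the middle factor into $\Gamma_{I,I^{\times}}\theta_I(x)$; reassembling the three pieces gives exactly $\Gamma^\times_{I,J}(\theta_I(x))$. I expect this bookkeeping—matching the padding projections on the two sides—to be the main, though essentially routine, obstacle, and it is precisely the step where the unit-preservation hypothesis $\theta_{s,t}(p_{s,t})=q_{s,t}$ is indispensable.

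With the compatibility identity in hand, the families $\{\theta_I\}_{I\in\cali{P}}$, $\{\Delta^\times_{I,J}\}$ and $\{\Gamma^\times_{I,J}\}$ are compatible with the inductive systems of Proposition \ref{lemma inductive limit}, so the universal property of the $C^*$-inductive limit furnishes a unique $*$-homomorphism $\theta:C^*_{\{p_{s,t}\}}(\cali{A})\to C^*_{\{q_{s,t}\}}(\cali{B})$ satisfying $\theta\,\Delta^\times_I=\Gamma^\times_I\,\theta_I$ for every $I\in\cali{P}$. Specializing this to the two-element partitions $I=\{s,t\}$ gives the asserted relation (\ref{Nov12a}). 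Since each $\theta_I$ is a $*$-monomorphism and an inductive limit of a compatible system of injective connecting maps is injective, $\theta$ is itself a $*$-monomorphism. Finally, uniqueness is delivered by the same universal property: any $*$-homomorphism intertwining the connecting maps $\Delta^\times_I$ with $\Gamma^\times_I\theta_I$ must agree with $\theta$ on the dense union $\bigcup_{I\in\cali{P}}\Delta^\times_I(\m{A}_I)$, and hence everywhere; the displayed relation (\ref{Nov12a}) is the trivial-partition restriction of this intertwining condition.
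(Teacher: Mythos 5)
Your construction of $\theta$ coincides with the paper's proof: the same tensor-product maps $\theta_I$, the same two-case verification of the compatibility $\theta_J\Delta^\times_{I,J}=\Gamma^\times_{I,J}\theta_I$, followed by the universal property of the inductive limit and specialization to trivial partitions. (Your same-endpoint case is the identity (\ref{rok}); the paper chooses to re-derive it here by an explicit iteration rather than cite it, but it is the same statement. Your padded-case computation, collapsing the flanking factors to $q_{\underline{I}}$ and $q_{\overline{I}}$ via the hypothesis $\theta_{s,t}(p_{s,t})=q_{s,t}$ and handling the middle factor by the same-endpoint case, is verbatim the paper's.) The injectivity of $\theta$ is also fine, since all maps involved are isometric.

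The genuine gap is in uniqueness. The proposition asserts that $\theta$ is the unique *-monomorphism satisfying only the trivial-partition relations (\ref{Nov12a}). What your final paragraph establishes is uniqueness among maps satisfying the all-partitions intertwining $\theta\Delta^\times_I=\Gamma^\times_I\theta_I$, $I\in\cali{P}$ (the paper's condition (\ref{Nov12})), and your closing remark that (\ref{Nov12a}) ``is the trivial-partition restriction of this intertwining condition'' runs the implication in the wrong direction: a map satisfying the restriction need not a priori satisfy the full condition, and uniqueness under a stronger hypothesis does not give uniqueness under a weaker one. Concretely, a *-monomorphism $\theta'$ obeying (\ref{Nov12a}) is determined only on the $C^*$-subalgebra generated by $\bigcup_{0<s<t}\Delta^\times_{s,t}(\m{A}_{s,t})$, and this need not be dense: inside $\Delta^\times_I(\m{A}_I)$ with $I=\{r,s,t\}$, the relations (\ref{Nov12a}) pin down $\theta'$ only on the subalgebra of $\m{A}_{r,s}\otimes\m{A}_{s,t}$ generated by $\m{A}_{r,s}\otimes p_{s,t}$, $p_{r,s}\otimes\m{A}_{s,t}$ and $\Delta_{r,s,t}(\m{A}_{r,t})$, which can be proper (already the first two generate a proper subalgebra of $M_2\otimes M_2$ when the unit projections have rank one). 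So an additional argument is needed, and this is exactly what the paper supplies by invoking the uniqueness argument of Proposition \ref{categorical theorem0}: there one shows that any candidate satisfying the trivial-partition relations automatically satisfies the compatibility relation for \emph{every} partition, and only then does the inductive-limit uniqueness you quote become applicable. Your proof omits this upgrading step, which is the only non-routine part of the uniqueness claim.
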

\begin{proof}
For any partition $I\in \cali{P}$ of the form $I=\{\iota_0<\iota_1<\ldots <\iota_{m+1}\}$, we consider the tensor product $\theta_I=\theta_{\iota_0,\iota_1}\otimes \theta_{\iota_1,\iota_2}\otimes \cdots \otimes\theta_{\iota_m,\iota_{m+1}}$, from $\m{A}_I$ to $\m{B}_I.$ We show that the resulting family of *-monomorphisms $\{\theta_I\}_{I\in\cali{P}}$ satisfies the compatibility condition
 \begin{eqnarray}\label{franc}\theta_J\Delta^\times_{I,J}=\Gamma^\times_{I,J}\theta_I,\end{eqnarray} for all $I,\, J\in\cali{P}$, $I\subseteq J$. 
 
 First of all, we will show that (\ref{franc}) is satisfied when the partitions $I$ and $J$ have the same endpoints. Let then $I,\,J\in\cali{P}_{s,t}$, $I\subseteq J$, where $I=\{s=\iota_0<\ldots <\iota_{m+1}=t\}$ and $J=\{s=j_0<j_1<\ldots <j_{n+1}=t\}$. Because $\theta_J\Delta^\times_{I,J}=\big(\theta_{I_0}\Delta_{\{\iota_0,\iota_1\},I_0}\big)\otimes \cdots \otimes \big(\theta_{I_m} \Delta_{\{\iota_m,\iota_{m+1}\},I_m}\big)$, where $J= I_0\cup\cdots\cup I_m$ is the decomposition of $J$ with respect to $I$, we can assume, without loss of generality, that $I$ is the trivial partition $\{s,t\}$.
Noticing that \begin{eqnarray*}&&\theta_J\left(\Delta_{j_0,j_1,j_2}\otimes \op{id}_{\m{A}_{j_2, j_3}}\otimes\cdots \otimes \op{id}_{\m{A}_{j_n, j_{n+1}}}\right)\\&=&(\theta_{j_0,j_1}\otimes \theta_{j_1,j_2})\Delta_{j_0,j_1,j_2}\otimes \theta_{J\setminus \{j_0,j_1\}}\\&=&\Gamma_{j_0,j_1,j_2} \theta_{j_0,j_2}\otimes \theta_{J\setminus \{j_0,j_1\}}=
\left(\Gamma_{j_0,j_1,j_2}\otimes \op{id}_{\m{B}_{j_2, j_3}}\otimes\cdots \otimes \op{id}_{\m{B}_{j_n, j_{n+1}}}\right)\theta_{J\setminus\{j_1\}},
\end{eqnarray*}
and iterating this calculation, we obtain that
\begin{eqnarray*}
&&\theta_J\Delta_{\{s,t\},J}^\times=\theta_J\left(\Delta_{j_0,j_1,j_2}\otimes \op{id}\big)\cdots\big(\Delta_{j_0,j_{n-1},j_n}\otimes \op{id}\right) \Delta_{j_0,j_n,j_{n+1}}
\\&=&\left(\Gamma_{j_0,j_1,j_2}\otimes \op{id}\right)\theta_{J\setminus\{j_1\}}\left(\Delta_{j_0,j_2,j_3}\otimes \op{id}\big)\cdots\big(\Delta_{j_0,j_{n-1},j_n}\otimes \op{id}\right) \Delta_{j_0,j_n,j_{n+1}}
\\
&=&
\left(\Gamma_{j_0,j_1,j_2}\otimes \op{id}\right)\left(\Gamma_{j_0,j_2,j_3}\otimes \op{id}\right)\theta_{J\setminus \{j_1,j_2\}})
\left(\Delta_{j_0,j_3,j_4}\otimes \op{id}\right)\cdots\\&&\cdots 
\left(\Delta_{j_0,j_{n-1},j_n}\otimes \op{id}\right) \Delta_{j_0,j_n,j_{n+1}}
=\cdots\cdots\cdots
\\&=&\left(\Gamma_{j_0,j_1,j_2}\otimes \op{id}\right)\left(\Gamma_{j_0,j_2,j_3}\otimes \op{id}\right)\cdots \left(\Gamma_{\iota_0,j_{n-1},j_n}\otimes \op{id}\right)\Gamma_{j_0,j_n,j_{n+1}}\theta_{\{j_0,j_{n+1}\}}
\\&=&\Gamma_{\{s,t\},J}^\times\theta_{\{s,t\}},
\end{eqnarray*}
as required.

In the general case of two arbitrary partitions $I,\, J\in\cali{P}$, $I\subseteq J$, consider the decomposition $J=\underline{I}\cup I^{\times}\cup \overline{I}$ of $J$ with respect to $I$. Then for any $x\in\m{A}_I$, we have that
\begin{eqnarray*}\theta_J \Delta^\times_{I,J}(x)={q}_{{\underline{I}}}\otimes \theta_{I^{\times}}\Delta_{I,I^{\times}}(x)\otimes {q}_{{\overline{I}}}={q}_{{\underline{I}}}\otimes \Gamma_{I,I^{\times}}\theta_{I}(x)\otimes {q}_{{\overline{I}}}
=\Gamma^\times_{I,J} \theta_I(x),\end{eqnarray*}
thus showing (\ref{franc}). Using (\ref{franc}) and the universal property of the inductive limit, we deduce that there is a unique *-monomorphism $\theta:C^*_{\{p_{s,t}\}}(\cali{A})\to C^*_{\{q_{s,t}\}}(\cali{B})$ that satisfies the compatibility condition \begin{eqnarray}\label{Nov12}
\theta \Delta_I^\times=\Gamma_I^\times \theta_I,\end{eqnarray} for all $I\in\cali{P}$. The uniqueness of $\theta$ can be shown as in the proof of Proposition \ref{categorical theorem0}. 
\end{proof}
The following corollary can be immediately deduced from Proposition \ref{categorical theorem}. We record it here for future reference.
\begin{corollary}\label{cucuss} Let $\cali{A}=\left(\{\m{A}_{s,t}\}_{0<s<t}, \{\Delta_{r,s,t}\}_{0<r<s<t}\right)$ be a unital $C^{\star}$-subproduct system with unit $\{p_{s,t}\}_{0<s<t}$. Then there exists a unique *-monomorphism $\Upsilon:C^*_{\{p_{s,t}\}}(\cali{A})\to C^*_{\{p_{s,t}^\sharp\}}(\cali{A}^\sharp)$ such that $$\Upsilon \Delta_{s,t}^\times=\big(\Delta_{s,t}^\sharp\big)^\times \Delta_{s,t}^\sharp,$$ for all $0<s<t$, where $\cali{A}^\sharp=\left(\{\m{A}_{s,t}^\sharp\}_{0<s<t},\,\{\Delta_{r,s,t}^\sharp\}_{0<r<s<t}\right)$ is the inductive dilation of $\cali{A}$.
\end{corollary}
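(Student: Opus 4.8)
The plan is to deduce this corollary as a direct specialization of Proposition \ref{categorical theorem}, taking the target subproduct system to be the inductive dilation $\cali{A}^\sharp$ and the connecting morphism to be the inductive embedding $\{\Delta_{s,t}^\sharp\}_{0<s<t}$.

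First I would verify that the hypotheses of Proposition \ref{categorical theorem} are satisfied for this choice. By Theorem \ref{star-isomorphism theorem}, the inductive dilation $\cali{A}^\sharp=\left(\{\m{A}_{s,t}^\sharp\}_{0<s<t},\,\{\Delta_{r,s,t}^\sharp\}_{0<r<s<t}\right)$ is a $C^*$-product system, hence in particular a $C^*$-subproduct system, and the inductive embedding $\{\Delta_{s,t}^\sharp\}_{0<s<t}$ is a monomorphism of tensorial $C^*$-systems from $\cali{A}$ to $\cali{A}^\sharp$. By Observation \ref{hasmas}, the system $\cali{A}^\sharp$ is unital, with unit dilation $\{p_{s,t}^\sharp\}_{0<s<t}$ defined by $p_{s,t}^\sharp=\Delta_{s,t}^\sharp(p_{s,t})$. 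This last identity is precisely the unit-compatibility condition $\theta_{s,t}(p_{s,t})=q_{s,t}$ required in Proposition \ref{categorical theorem}, read with $\theta_{s,t}=\Delta_{s,t}^\sharp$ and $q_{s,t}=p_{s,t}^\sharp$.

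Having confirmed these, I would invoke Proposition \ref{categorical theorem} applied to $\cali{A}$ and $\cali{B}=\cali{A}^\sharp$, equipped respectively with the units $\{p_{s,t}\}_{0<s<t}$ and $\{p_{s,t}^\sharp\}_{0<s<t}$, and with the monomorphism $\{\theta_{s,t}\}_{0<s<t}=\{\Delta_{s,t}^\sharp\}_{0<s<t}$. This yields a unique *-monomorphism $\Upsilon:C^*_{\{p_{s,t}\}}(\cali{A})\to C^*_{\{p_{s,t}^\sharp\}}(\cali{A}^\sharp)$ satisfying $\Upsilon\Delta_{s,t}^\times=(\Delta_{s,t}^\sharp)^\times\Delta_{s,t}^\sharp$ for all $0<s<t$, where $(\Delta_{s,t}^\sharp)^\times$ is the connecting *-monomorphism of the inductive system defining $C^*_{\{p_{s,t}^\sharp\}}(\cali{A}^\sharp)$. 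I do not expect a genuine obstacle here, as the statement is purely an instantiation of the preceding proposition; the only point requiring care is notational bookkeeping, namely recognizing that the generic connecting map $\Gamma_{s,t}^\times$ of Proposition \ref{categorical theorem} is played by $(\Delta_{s,t}^\sharp)^\times$ once the comultiplication of $\cali{B}=\cali{A}^\sharp$ is written as $\{\Delta_{r,s,t}^\sharp\}_{0<r<s<t}$, and that the needed unit compatibility is exactly the defining property of the unit dilation.
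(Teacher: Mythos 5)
Your proposal is correct and is exactly the paper's intended argument: the paper states that Corollary \ref{cucuss} "can be immediately deduced from Proposition \ref{categorical theorem}," i.e., by applying that proposition with $\cali{B}=\cali{A}^\sharp$, $\theta_{s,t}=\Delta_{s,t}^\sharp$, and units $\{p_{s,t}\}_{0<s<t}$, $\{p_{s,t}^\sharp\}_{0<s<t}$, just as you do. Your verification of the hypotheses (Theorem \ref{star-isomorphism theorem} for the monomorphism property and Observation \ref{hasmas} for the unit compatibility $\Delta_{s,t}^\sharp(p_{s,t})=p_{s,t}^\sharp$) is precisely the bookkeeping the paper leaves implicit.
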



\section{A one-parameter co-multiplication on $C^*_{\{p_{s,t}\}}(\cali{A})$}\label{sec5+.1}

We start this section by showing that the constituent $C^*$-algebras $\m{A}_{s,t}^\sharp$ of the inductive dilation $\cali{A}^\sharp=\left(\{\m{A}_{s,t}^\sharp\}_{0<s<t},\,\{\Delta_{r,s,t}^\sharp\}_{0<r<s<t}\right)$ of a unital $C^*$-subproduct system $\cali{A}=\left(\{\m{A}_{s,t}\}_{0<s<t}, \{\Delta_{r,s,t}\}_{0<r<s<t}\right)$ with unit $\{p_{s,t}\}_{0<s<t}$ can be assembled into an inductive system over the partially ordered set $(I_{(0,\infty)},\subseteq)$ of all open intervals of positive real numbers, ordered by inclusion. We note that the same procedure can be applied to any $C^*$-product system, and not only to inductive dilations.

\begin{proposition}\label{Nov06}
Let $\cali{A}=\left(\{\m{A}_{s,t}\}_{0<s<t}, \{\Delta_{r,s,t}\}_{0<r<s<t}\right)$ be a unital $C^*$-subproduct system and $\cali{A}^\sharp=\left(\{\m{A}_{s,t}^\sharp\}_{0<s<t},\,\{\Delta_{r,s,t}^\sharp\}_{0<r<s<t}\right)$ be its inductive dilation. Let $\{p_{s,t}\}_{0<s<t}$ be a unit of $\cali{A}$, and $\{p_{s,t}^\sharp\}_{0<s<t}$ be the unit dilation of $\{p_{s,t}\}_{0<s<t}$. For any real numbers $0<u\leq s<t\leq v$, consider the  *-monomorphism $\Delta^\sharp _{(s,t),(u,v)}: \m{A}_{s,t}^\sharp\to \m{A}_{u,v}^\sharp$, acting as  \begin{eqnarray*}\Delta^\sharp _{(s,t),(u,v)}(x)=\left\{\begin{array}{ll}
x,&\mbox{if}\;u=s<t= v\\
(\Delta_{s,t,v}^\sharp)^{-1}(x \otimes {p}^\sharp_{t,v} ),&\mbox{if}\;\;u=s<t< v\\
(\Delta_{u,s,t}^\sharp)^{-1}({p}^\sharp_{u,s}\otimes x ),&\mbox{if}\;\;u<s<t=v\\
(\Delta^\sharp_{u,s,v})^{-1}\left(\op{id}_{\m{A}_{u,s}^\sharp}\otimes \Delta^\sharp_{s,t,v}\right)^{-1}\left({p}^\sharp_{u,s}\otimes  x \otimes {p}^\sharp_{t,v} \right),&\mbox{if}\;\;u< s<t< v \end{array}\right. \end{eqnarray*}
for every $x\in \m{A}_{s,t}^\sharp$. Then the system $
\left\{ \left(\m{A}_{s,t}^\sharp, \Delta^\sharp _{(s,t),(u,v)}\right)\,|\, (s,t)\in I_{(0,\infty)}\right\}$
is an inductive system of $C^*$-algebras over the partially ordered set $(I_{(0,\infty)},\subseteq)$.
\end{proposition}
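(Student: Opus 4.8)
The plan is to verify the two defining properties of an inductive system over $(I_{(0,\infty)},\subseteq)$. First, this poset is upward directed, since any two bounded open intervals $(s,t)$ and $(s',t')$ both lie in $(\min(s,s'),\max(t,t'))$. Second, each connecting map is genuinely a $*$-monomorphism, being a composition of the $*$-isomorphisms $(\Delta^\sharp_{\cdots})^{-1}$ with a padding map $x\mapsto p^\sharp_{\cdot}\otimes x\otimes p^\sharp_{\cdot}$; the latter is isometric because the $p^\sharp$ are non-zero projections and the tensor product is the injective one. The first line of the definition already gives $\Delta^\sharp_{(s,t),(s,t)}=\op{id}_{\m{A}^\sharp_{s,t}}$. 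Hence the entire content is the functoriality identity
\[
\Delta^\sharp_{(u,v),(w,z)}\,\Delta^\sharp_{(s,t),(u,v)}=\Delta^\sharp_{(s,t),(w,z)},\qquad 0<w\le u\le s<t\le v\le z.
\]

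To sidestep the proliferation of cases caused by coincident endpoints, I would first package the iterated splittings of $\cali{A}^\sharp$. Since $\cali{A}^\sharp$ is a $C^*$-product system, every $\Delta^\sharp_{r,s,t}$ is a $*$-isomorphism, and by the co-associativity law of Theorem \ref{star-isomorphism theorem} any two ways of splitting $\m{A}^\sharp_{a,b}$ along a fixed finite set of interior points $a<c_1<\dots<c_k<b$ coincide. Denote the resulting $*$-isomorphism by $\Phi^{a,b}_{c_1,\dots,c_k}:\m{A}^\sharp_{a,b}\to\m{A}^\sharp_{a,c_1}\otimes\dots\otimes\m{A}^\sharp_{c_k,b}$. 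In this language each connecting map admits the uniform description
\[
\Delta^\sharp_{(s,t),(u,v)}(x)=\big(\Phi^{u,v}_{s,t}\big)^{-1}\!\big(p^\sharp_{u,s}\otimes x\otimes p^\sharp_{t,v}\big),
\]
with the convention that a padding factor is omitted whenever the corresponding endpoints coincide; the four displayed cases are precisely the specializations of this formula.

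With this notation fixed, I would prove functoriality by testing against the finest splitting $\Xi=\Phi^{w,z}_{u,s,t,v}:\m{A}^\sharp_{w,z}\to\m{A}^\sharp_{w,u}\otimes\m{A}^\sharp_{u,s}\otimes\m{A}^\sharp_{s,t}\otimes\m{A}^\sharp_{t,v}\otimes\m{A}^\sharp_{v,z}$. Iterated co-associativity yields two factorizations of $\Xi$: on one hand $\Xi=(\op{id}_{\m{A}^\sharp_{w,u}}\otimes\Phi^{u,v}_{s,t}\otimes\op{id}_{\m{A}^\sharp_{v,z}})\,\Phi^{w,z}_{u,v}$, and on the other $\Xi=(\Delta^\sharp_{w,u,s}\otimes\op{id}_{\m{A}^\sharp_{s,t}}\otimes\Delta^\sharp_{t,v,z})\,\Phi^{w,z}_{s,t}$. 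Applying the first factorization and unwinding the definitions, one computes
\[
\Xi\big(\Delta^\sharp_{(u,v),(w,z)}\Delta^\sharp_{(s,t),(u,v)}(x)\big)=p^\sharp_{w,u}\otimes p^\sharp_{u,s}\otimes x\otimes p^\sharp_{t,v}\otimes p^\sharp_{v,z}.
\]
Applying the second factorization to $\Delta^\sharp_{(s,t),(w,z)}(x)=(\Phi^{w,z}_{s,t})^{-1}(p^\sharp_{w,s}\otimes x\otimes p^\sharp_{t,z})$ and invoking that $\{p^\sharp_{s,t}\}$ is a unit of $\cali{A}^\sharp$ (Observation \ref{hasmas}), so that $\Delta^\sharp_{w,u,s}(p^\sharp_{w,s})=p^\sharp_{w,u}\otimes p^\sharp_{u,s}$ and $\Delta^\sharp_{t,v,z}(p^\sharp_{t,z})=p^\sharp_{t,v}\otimes p^\sharp_{v,z}$, produces the same element. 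Since $\Xi$ is injective, the two sides of the functoriality identity agree.

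The main obstacle, as anticipated, is purely organizational: the definition branches into four cases according to whether $u=s$ and whether $t=v$, and the composition of two connecting maps would a priori generate many sub-cases. The iterated-splitting formalism is exactly what collapses them, since the co-multiplicativity of the unit dilation lets each fused padding projection $p^\sharp_{w,s}$ and $p^\sharp_{t,z}$ break into the constituents produced by the two-step composition, while degenerate intervals drop out of $\Xi$ automatically. The one non-formal ingredient is the well-definedness of the splittings $\Phi$ and of $\Xi$ as $*$-isomorphisms, which rests on the co-associativity of $\cali{A}^\sharp$ established in Theorem \ref{star-isomorphism theorem}.
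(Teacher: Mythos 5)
Your proof is correct, and it reaches the compatibility identity by a different route than the paper. The paper proves $\Delta^\sharp_{(s,t),(u,v)}\Delta^\sharp_{(q,r),(s,t)}=\Delta^\sharp_{(q,r),(u,v)}$ by direct rewriting: it expands the composition as a chain of nested inverse isomorphisms applied to $p^\sharp_{u,q}\otimes x\otimes p^\sharp_{r,v}$ (the very first step already invokes the unit property $\Delta^\sharp_{u,s,q}(p^\sharp_{u,q})=p^\sharp_{u,s}\otimes p^\sharp_{s,q}$, $\Delta^\sharp_{r,t,v}(p^\sharp_{r,v})=p^\sharp_{r,t}\otimes p^\sharp_{t,v}$), then applies the co-associativity law of $\cali{A}^\sharp$ twice to cancel factors, and finally re-factors $(\Delta^\sharp_{u,s,v})^{-1}$ so that the expression collapses into the definition of $\Delta^\sharp_{(q,r),(u,v)}$. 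You instead conjugate both sides of the identity by the finest splitting isomorphism $\Xi=\Phi^{w,z}_{u,s,t,v}$, show via two factorizations of $\Xi$ that each side sends $x$ to the same fully padded elementary tensor $p^\sharp_{w,u}\otimes p^\sharp_{u,s}\otimes x\otimes p^\sharp_{t,v}\otimes p^\sharp_{v,z}$, and conclude by injectivity of $\Xi$. The ingredients are identical in both arguments --- co-associativity of $\cali{A}^\sharp$ from Theorem \ref{star-isomorphism theorem} and the fact that $\{p^\sharp_{s,t}\}_{0<s<t}$ is a unit of $\cali{A}^\sharp$ from Observation \ref{hasmas} --- but your organization buys a cleaner and more scalable verification: the paper's multi-line shuffling of nested inverses is replaced by two readings of a single isomorphism, and the four-case definition is unified into one formula. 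The price is the coherence lemma asserting that $\Phi^{a,b}_{c_1,\dots,c_k}$ is independent of the order of splitting; you correctly attribute it to co-associativity (it is a routine induction on $k$), whereas the paper never states it, using only the special instances it needs. Both proofs are equally brief about the degenerate cases of coincident endpoints: the paper dismisses them as easily shown, and your convention of omitting the corresponding padding factors handles them with the same level of rigor.
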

\begin{proof}
We will show that the family of *-monorphisms defined above satisfies the compatibility condition \begin{eqnarray*}\Delta^\sharp _{(s,t),(u,v)} \Delta^\sharp _{(q,r),(s,t)}= \Delta^\sharp _{(q,r),(u,v)},\end{eqnarray*} for any strict inclusion of open intervals $(q,r)\subset (s,t)\subset (u,v)$. First of all, we notice that
\begin{eqnarray*}&&\Delta^\sharp _{(s,t),(u,v)} \Delta^\sharp _{(q,r),(s,t)}(x)=(\Delta^\sharp_{u,s,v})^{-1}\left(\op{id}_{\m{A}_{u,s}^\sharp}\otimes \Delta^\sharp_{s,t,v}\right)^{-1}
\left(\op{id}_{\m{A}^\sharp_{u,s}}\otimes   \Delta^\sharp_{s,q,t} \otimes \op{id}_{\m{A}^\sharp_{t,v}} \right)^{-1}\\&&
\left(\op{id}_{\m{A}^\sharp_{u,s}}\otimes  \op{id}_{\m{A}_{s,q}^\sharp}\otimes \Delta^\sharp_{q,r,t} \otimes \op{id}_{\m{A}^\sharp_{t,v}} \right)^{-1}
\left(\Delta^\sharp_{u,s,q}\otimes \op{id}_{\m{A}^\sharp_{q,r}} \otimes \Delta^\sharp_{r,t,v} \right)
\left({p}^\sharp_{u,q}\otimes  x \otimes {p}^\sharp_{r,v} \right),
\end{eqnarray*}
for all  $x\in \m{A}_{q,r}^\sharp.$ In addition, using the co-associativity laws $\left(\Delta^\sharp_{s,q,t} \otimes \op{id}_{\m{A}^\sharp_{t,v}}\right)\Delta^\sharp_{s,t,v}= \left( \op{id}_{\m{A}^\sharp_{s,q}} \otimes\Delta^\sharp_{q,t,v} \right)\Delta^\sharp_{s,q,v}$ and $ \left(\Delta^\sharp_{q,r,t}\otimes \op{id}_{\m{A}^\sharp_{t,v}}\right)\Delta^\sharp_{q,t,v}= \left(\op{id}_{\m{A}^\sharp_{q,r}}\otimes
\Delta^\sharp_{r,t,v} \right)\Delta^\sharp_{q,r,v} $, and then canceling out a few factors, we turn the above identity  into 
\begin{eqnarray*}&&\Delta^\sharp _{(s,t),(u,v)} \Delta^\sharp _{(q,r),(s,t)}(x)=(\Delta^\sharp_{u,s,v})^{-1}
\left(\op{id}_{\m{A}^\sharp_{u,s}} \otimes \Delta^\sharp_{s,q,v} \right)^{-1}
\left(\op{id}_{\m{A}^\sharp_{u,s}}\otimes  \op{id}_{\m{A}_{s,q}^\sharp}\otimes \Delta^\sharp_{q,r,v}  \right)^{-1}
\\&&
\left(\Delta^\sharp_{u,s,q}\otimes \op{id}_{\m{A}^\sharp_{q,r}} \otimes  \op{id}_{\m{A}^\sharp_{r,v}} \right)
\left({p}^\sharp_{u,q}\otimes  x \otimes {p}^\sharp_{r,v} \right).
%
\end{eqnarray*}
Finally, writing the first term of the above expression as $$(\Delta^\sharp_{u,s,v})^{-1}=(\Delta^\sharp_{u,q,v})^{-1}\left(\Delta^\sharp_{u,s,q}\otimes \op{id}_{\m{A}_{q,v}^\sharp}
\right)^{-1}\left(\op{id}_{\m{A}_{u,s}^\sharp}\otimes \Delta^\sharp_{s,q,v}\right),$$ we obtain
\begin{eqnarray*}&&\Delta^\sharp _{(s,t),(u,v)} \Delta^\sharp _{(q,r),(s,t)}(x)
=(\Delta^\sharp_{u,q,v})^{-1}\left(\Delta^\sharp_{u,s,q}\otimes \op{id}_{\m{A}_{q,v}^\sharp}
\right)^{-1}
\left(\op{id}_{\m{A}^\sharp_{u,s}}\otimes  \op{id}_{\m{A}_{s,q}^\sharp}\otimes \Delta^\sharp_{q,r,v}  \right)^{-1}
\\&&
\left(\Delta^\sharp_{u,s,q}\otimes \op{id}_{\m{A}^\sharp_{q,r}} \otimes  \op{id}_{\m{A}^\sharp_{r,v}} \right)
\left({p}^\sharp_{u,q}\otimes  x \otimes {p}^\sharp_{r,v} \right)\\
&=&(\Delta^\sharp_{u,q,v})^{-1}\left(\op{id}_{\m{A}^\sharp_{u,q}}\otimes \Delta^\sharp_{q,r,v}  \right)^{-1}
\left({p}^\sharp_{u,q}\otimes  x \otimes {p}^\sharp_{r,v} \right)=\Delta_{(q,r),(u,v)}^\sharp (x),
\end{eqnarray*}
as needed. The remaining cases, i.e., those of non-strict inclusion of open intervals, can also be easily shown. 
\end{proof}
\begin{definition}\label{Oct11}
Let $\cali{A}=\left(\{\m{A}_{s,t}\}_{0<s<t}, \{\Delta_{r,s,t}\}_{0<r<s<t}\right)$ be a unital $C^*$-subproduct system with unit $\{p_{s,t}\}_{0<s<t}$. We define the $C^*$-algebra $\m{A}^\diamond_{\{p_{s,t}\}}$ as the $C^*$-inductive limit of the inductive system constructed above, i.e., 
$$
\m{A}^\diamond_{\{p_{s,t}\}}=\limind
\left\{ \left(\m{A}_{s,t}^\sharp, \Delta^\sharp _{(s,t),(u,v)}\right)\,|\, (s,t)\in I_{(0,\infty)}\right\},$$
 with connecting *-monomorphisms $
 \Delta^{\diamond}_{s,t}:\m{A}_{s,t}^\sharp\to \m{A}^\diamond_{\{p_{s,t}\}}$, for all $0<s<t$.
\end{definition}
The next corollary is a direct consequence of Proposition \ref{categorical theorem0} and the universal property of the inductive limit.
\begin{corollary}Suppose that $\cali{A}=\left(\{\m{A}_{s,t}\}_{0<s<t}, \{\Delta_{r,s,t}\}_{0<r<s<t}\right)$ and $\cali{B}=\left(\{\m{B}_{s,t}\}_{0<s<t}, \{\Gamma_{r,s,t}\}_{0<r<s<t}\right)$ are unital $C^*$-subproduct systems, and let $\cali{A}^\sharp=\left(\{\m{A}_{s,t}^\sharp\}_{0<s<t},\,\{\Delta_{r,s,t}^\sharp\}_{0<r<s<t}\right)$, resp. $\cali{B}^\sharp=\left(\{\m{B}_{s,t}^\sharp\}_{0<s<t},\,\{\Gamma_{r,s,t}^\sharp\}_{0<r<s<t}\right)$, be their inductive dilations. Let  $\{p_{s,t}\}_{0<s<t}$ and $\{q_{s,t}\}_{0<s<t}$ be units of $\cali{A}$ and $\cali{B}$ respectively. If $\{\theta_{s,t}\}_{0<s<t}$ is a monomorphism, respectively isomorphism, of $C^*$-subproduct systems from $\cali{A}$ to $\cali{B}$ satisfying $\theta_{s,t}(p_{s,t})=q_{s,t}$, for all $0<s<t$, then there exists a unique *-monomorphism  $\theta^{\diamond}$, respectively *-isomorphism, of $C^*$-algebras from $\m{A}^{\diamond}_{\{p_{s,t}\}}$ to $\m{B}^{\diamond}_{\{q_{s,t}\}}$ that makes the diagram
\begin{eqnarray*}
\xymatrix{
\m{B}_{s,t}  \ar[r]^-{\Gamma_{s,t}^\sharp} & \m{B}^\sharp_{s,t} \ar[r]^-{\Gamma_{s,t}^\diamond} & \m{B}^{\diamond}_{\{q_{s,t}\}} \\
\m{A}_{s,t} \ar[r]_-{\Delta_{s,t}^\sharp}\ar[u]^-{\theta_{s,t}} & \m{A}_{s,t}^\sharp\ar[u]_-{\theta^\sharp_{s,t}} \ar[r]^-{\Delta_{s,t}^\diamond} &\m{A}^{\diamond}_{\{p_{s,t}\}} \ar[u]_-{\theta^\diamond}}
\end{eqnarray*}
 commutative, for all $0<s<t$.
\end{corollary}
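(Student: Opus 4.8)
The plan is to build $\theta^{\diamond}$ directly from the morphism $\{\theta_{s,t}^\sharp\}_{0<s<t}$ of inductive dilations supplied by Proposition \ref{categorical theorem0}, and then invoke the universal property of the inductive limit defining $\m{A}^{\diamond}_{\{p_{s,t}\}}$. First I would record that $\{\theta_{s,t}^\sharp\}_{0<s<t}$ is a monomorphism (resp.\ isomorphism) from $\cali{A}^\sharp$ to $\cali{B}^\sharp$ with $\theta_{s,t}^\sharp\Delta_{s,t}^\sharp=\Gamma_{s,t}^\sharp\theta_{s,t}$; this is exactly Proposition \ref{categorical theorem0}, and it already gives commutativity of the left-hand square of the displayed diagram. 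I would also note the compatibility of $\theta^\sharp$ with the unit dilations: since $p_{s,t}^\sharp=\Delta_{s,t}^\sharp(p_{s,t})$ and $q_{s,t}^\sharp=\Gamma_{s,t}^\sharp(q_{s,t})$, the hypothesis $\theta_{s,t}(p_{s,t})=q_{s,t}$ yields $\theta_{s,t}^\sharp(p_{s,t}^\sharp)=\theta_{s,t}^\sharp\Delta_{s,t}^\sharp(p_{s,t})=\Gamma_{s,t}^\sharp\theta_{s,t}(p_{s,t})=\Gamma_{s,t}^\sharp(q_{s,t})=q_{s,t}^\sharp$, for all $0<s<t$.

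The key step is to verify that $\{\theta_{s,t}^\sharp\}_{0<s<t}$ is compatible with the two inductive systems built in Proposition \ref{Nov06}, that is, that
$$\theta_{u,v}^\sharp\,\Delta_{(s,t),(u,v)}^\sharp=\Gamma_{(s,t),(u,v)}^\sharp\,\theta_{s,t}^\sharp$$
for every inclusion $(s,t)\subseteq(u,v)$ of open intervals. Here I would use that $\{\theta_{s,t}^\sharp\}$ is a morphism of $C^*$-\emph{product} systems, so $\Gamma_{r,s,t}^\sharp\theta_{r,t}^\sharp=(\theta_{r,s}^\sharp\otimes\theta_{s,t}^\sharp)\Delta_{r,s,t}^\sharp$, and since the co-multiplications are isomorphisms this rearranges to the intertwining relation $\theta_{r,t}^\sharp(\Delta_{r,s,t}^\sharp)^{-1}=(\Gamma_{r,s,t}^\sharp)^{-1}(\theta_{r,s}^\sharp\otimes\theta_{s,t}^\sharp)$. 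Feeding this relation, together with the unit compatibility established above, into the four-case formula defining $\Delta_{(s,t),(u,v)}^\sharp$ reduces the compatibility identity to a bookkeeping check in each case; the only genuinely laborious one is $u<s<t<v$, where
$$\Delta_{(s,t),(u,v)}^\sharp(x)=(\Delta_{u,s,v}^\sharp)^{-1}\bigl(\op{id}_{\m{A}_{u,s}^\sharp}\otimes\Delta_{s,t,v}^\sharp\bigr)^{-1}\bigl(p_{u,s}^\sharp\otimes x\otimes p_{t,v}^\sharp\bigr),$$
and one pushes $\theta_{u,v}^\sharp$ through the two inverse co-multiplications, converting the factors $p_{u,s}^\sharp\otimes\theta_{s,t}^\sharp(x)\otimes p_{t,v}^\sharp$ to the $\cali{B}$-side and recognizing the outcome as $\Gamma_{(s,t),(u,v)}^\sharp\theta_{s,t}^\sharp(x)$.

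With this compatibility in hand, the universal property of the inductive limit $\m{A}^{\diamond}_{\{p_{s,t}\}}=\limind\left\{(\m{A}_{s,t}^\sharp,\Delta_{(s,t),(u,v)}^\sharp)\right\}$ furnishes a unique *-homomorphism $\theta^{\diamond}\colon\m{A}^{\diamond}_{\{p_{s,t}\}}\to\m{B}^{\diamond}_{\{q_{s,t}\}}$ with $\theta^{\diamond}\Delta_{s,t}^{\diamond}=\Gamma_{s,t}^{\diamond}\theta_{s,t}^\sharp$ for all $0<s<t$, which is exactly commutativity of the right-hand square; composing the two commuting squares gives the full diagram. Injectivity of $\theta^{\diamond}$ follows from injectivity of each $\theta_{s,t}^\sharp$ and of the connecting maps: since these *-monomorphisms are isometric, $\theta^{\diamond}$ is isometric on each $\Delta_{s,t}^{\diamond}(\m{A}_{s,t}^\sharp)$, and the union of these is dense in $\m{A}^{\diamond}_{\{p_{s,t}\}}$. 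If moreover each $\theta_{s,t}$ is an isomorphism then each $\theta_{s,t}^\sharp$ is too by Proposition \ref{categorical theorem0}, whence $\theta^{\diamond}$ is a *-isomorphism; and uniqueness is argued exactly as in the proof of Proposition \ref{categorical theorem0}, via the same density. The main obstacle is the $u<s<t<v$ compatibility computation, since it requires carefully commuting $\theta^\sharp$ past the inverse co-multiplications while tracking the unit-dilation factors.
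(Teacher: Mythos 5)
Your proposal is correct and follows essentially the same route as the paper's proof: the paper likewise reduces everything to the single compatibility identity $\theta^\sharp_{u,v}\Delta^\sharp_{(s,t),(u,v)}=\Gamma^\sharp_{(s,t),(u,v)}\theta^\sharp_{s,t}$ in the case $0<u<s<t<v$, establishes it by pushing $\theta^\sharp_{u,v}$ through the two inverse co-multiplications via the intertwining relation $\theta^\sharp_{r,t}(\Delta^\sharp_{r,s,t})^{-1}=(\Gamma^\sharp_{r,s,t})^{-1}(\theta^\sharp_{r,s}\otimes\theta^\sharp_{s,t})$ and the unit-dilation compatibility $\theta^\sharp_{s,t}(p^\sharp_{s,t})=q^\sharp_{s,t}$, and then invokes Proposition \ref{categorical theorem0} together with the universal property of the inductive limit. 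Your extra remarks on injectivity, surjectivity, and uniqueness are the standard density arguments that the paper leaves implicit.
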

\begin{proof}
It is enough to show that $\theta^\sharp_{u,v}{\Delta_{(s,t), (u,v)}^\sharp}=\Gamma_{(s,t), (u,v)}^\sharp\theta^\sharp_{s,t}$,  for all real numbers $0<u< s<t< v$.
For this, let $x\in \m{A}_{s,t}^\sharp$. We then have \begin{eqnarray*}
&& \theta^\sharp_{u,v}{\Delta_{(s,t), (u,v)}^\sharp}(x)= \theta^\sharp_{u,v}(\Delta^\sharp_{u,s,v})^{-1}\left(\op{id}_{\m{A}_{u,s}^\sharp}\otimes \Delta^\sharp_{s,t,v}\right)^{-1}({p}^\sharp_{u,s}\otimes  x \otimes {p}^\sharp_{t,v} )\\
 &=&(\Gamma^\sharp_{u,s,v})^{-1}( \theta^\sharp_{u,s}\otimes  \theta^\sharp_{s,v}(\Delta^\sharp_{s,t,v})^{-1})({p}^\sharp_{u,s}\otimes  x \otimes {p}^\sharp_{t,v} )\\
 &=&(\Gamma^\sharp_{u,s,v})^{-1}(\op{id}_{\m{B}_{u,s}^\sharp}\otimes \Gamma^\sharp_{s,t,v})^{-1}( \theta^\sharp_{u,s}\otimes \theta^\sharp_{s,t}\otimes  \theta^\sharp_{t,v})({p}^\sharp_{u,s}\otimes  x \otimes {p}^\sharp_{t,v})\\
 &=&\Gamma_{(s,t), (u,v)}^\sharp\theta^\sharp_{s,t}(x),
  \end{eqnarray*}
  as needed.
\end{proof}
The following proposition gives a useful compatibility relation between the connecting *-monomorphism introduced so far: $\Delta^\sharp_I$'s of Definition \ref{Oct5}, $\Delta^\sharp_{(s,t),(u,v)}$'s of Proposition \ref{Nov06}, and $\Delta_{I,J}^\times$'s of Proposition \ref{lemma inductive limit}.
\begin{proposition}\label{March23} Let $\cali{A}=\left(\{\m{A}_{s,t}\}_{0<s<t}, \{\Delta_{r,s,t}\}_{0<r<s<t}\right)$ be a unital $C^*$-subproduct system with unit $\{p_{s,t}\}_{0<s<t}$. For any two partitions $I\in\cali{P}_{s,t}$, $J\in\cali{P}_{u,v}$, $I\subseteq J$, where $0<u\leq s<t\leq v$, the diagram
\begin{eqnarray*}
\xymatrix{
\m{A}_J  \ar[r]^-{\Delta_J^\sharp} & \m{A}^\sharp_{u,v} \\
\m{A}_I \ar[r]_-{\Delta^\sharp_I}\ar[u]^-{\Delta^\times_{I,J}} & \m{A}^\sharp_{s,t}\ar[u]_-{\Delta^\sharp_{(s,t),(u,v)}.} }
\end{eqnarray*}
commutes, that is, $\Delta_J^\sharp\Delta_{I,J}^\times=\Delta^\sharp_{(s,t),(u,v)}\Delta^\sharp_I.$
\end{proposition}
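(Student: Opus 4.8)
The plan is to verify the identity $\Delta_J^\sharp\Delta_{I,J}^\times=\Delta^\sharp_{(s,t),(u,v)}\Delta^\sharp_I$ pointwise on $\m{A}_I$, splitting the argument according to how the endpoints of $(s,t)$ and $(u,v)$ are related, exactly mirroring the four branches in the definitions of $\Delta^\times_{I,J}$ (Proposition \ref{lemma inductive limit}) and of $\Delta^\sharp_{(s,t),(u,v)}$ (Proposition \ref{Nov06}). When $u=s$ and $t=v$, both $I$ and $J$ lie in $\cali{P}_{s,t}$, so $\Delta^\times_{I,J}=\Delta_{I,J}$ and $\Delta^\sharp_{(s,t),(u,v)}=\op{id}$, and the claim reduces to the defining compatibility relation $\Delta_J^\sharp\Delta_{I,J}=\Delta_I^\sharp$ of the inductive limit from Definition \ref{Oct5}. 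The substance lies in the strict case $0<u<s<t<v$; the two one-sided cases follow by the same mechanism with one or two tensor factors suppressed.

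For the strict case, the central idea is to re-express $\Delta^\sharp_J$ itself by means of the tensorization relation (\ref{compas}). Writing the decomposition $J=\underline{I}\cup I^{\times}\cup \overline{I}$ with $\underline{I}\in\cali{P}_{u,s}$, $I^{\times}\in\cali{P}_{s,t}$ a refinement of $I$, and $\overline{I}\in\cali{P}_{t,v}$, I would first split $[u,v]$ at $s$, applying (\ref{compas}) to $J=\underline{I}\cup(I^{\times}\cup\overline{I})$ to obtain $\Delta^\sharp_{u,s,v}\Delta^\sharp_J=\Delta^\sharp_{\underline{I}}\otimes\Delta^\sharp_{I^{\times}\cup\overline{I}}$, and then split $[s,v]$ at $t$ to obtain $\Delta^\sharp_{s,t,v}\Delta^\sharp_{I^{\times}\cup\overline{I}}=\Delta^\sharp_{I^{\times}}\otimes\Delta^\sharp_{\overline{I}}$. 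Composing these yields
$$\Delta^\sharp_J=(\Delta^\sharp_{u,s,v})^{-1}\bigl(\op{id}_{\m{A}^\sharp_{u,s}}\otimes\Delta^\sharp_{s,t,v}\bigr)^{-1}\bigl(\Delta^\sharp_{\underline{I}}\otimes\Delta^\sharp_{I^{\times}}\otimes\Delta^\sharp_{\overline{I}}\bigr).$$

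It then remains to apply this to the elementary tensor $p_{\underline{I}}\otimes\Delta_{I,I^{\times}}(x)\otimes p_{\overline{I}}=\Delta^\times_{I,J}(x)$ and to identify the three resulting factors. The middle factor is immediate from the inductive-limit compatibility within $\cali{P}_{s,t}$: since $I\subseteq I^{\times}$, one has $\Delta^\sharp_{I^{\times}}\Delta_{I,I^{\times}}=\Delta^\sharp_I$, so it becomes $\Delta^\sharp_I(x)$. For the two outer factors, I would combine Observation \ref{hasmas} with the same compatibility: the relation $\Delta_{\{u,s\},\underline{I}}(p_{u,s})=p_{\underline{I}}$ gives $\Delta^\sharp_{\underline{I}}(p_{\underline{I}})=\Delta^\sharp_{\underline{I}}\Delta_{\{u,s\},\underline{I}}(p_{u,s})=\Delta^\sharp_{u,s}(p_{u,s})=p^\sharp_{u,s}$, and likewise $\Delta^\sharp_{\overline{I}}(p_{\overline{I}})=p^\sharp_{t,v}$. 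Substituting these three identifications reduces the left-hand side to $(\Delta^\sharp_{u,s,v})^{-1}(\op{id}_{\m{A}^\sharp_{u,s}}\otimes\Delta^\sharp_{s,t,v})^{-1}(p^\sharp_{u,s}\otimes\Delta^\sharp_I(x)\otimes p^\sharp_{t,v})$, which is precisely the fourth-branch value of $\Delta^\sharp_{(s,t),(u,v)}\Delta^\sharp_I(x)$ read off from Proposition \ref{Nov06}.

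The one-sided cases $u=s<t<v$ and $u<s<t=v$ are handled identically, except that the suppressed endpoint makes $\underline{I}$ (respectively $\overline{I}$) a singleton that is dropped, so that only the single splitting of (\ref{compas}) at $t$ (respectively at $s$) is needed and only one projection factor survives. I do not expect a genuine obstacle: once the decomposition of $\Delta^\sharp_J$ through (\ref{compas}) is available, the rest is bookkeeping. The one point demanding care is the consistent use of the two distinct compatibility relations — the one \emph{within} a fixed interval, $\Delta^\sharp_J\Delta_{I,J}=\Delta^\sharp_I$, which supplies the middle factor and, via Observation \ref{hasmas}, the projections, versus the tensorization relation (\ref{compas}) \emph{across} a splitting point — together with the recognition of the outer factors as the unit dilations $p^\sharp_{u,s}$ and $p^\sharp_{t,v}$.
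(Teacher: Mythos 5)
Your proposal is correct and follows essentially the same route as the paper's own proof: the same decomposition $J=\underline{I}\cup I^{\times}\cup \overline{I}$, the same double application of (\ref{compas}) at the splitting points $s$ and $t$ to re-express $\Delta_J^\sharp$, and the same identification of the middle factor via $\Delta^\sharp_{I^{\times}}\Delta_{I,I^{\times}}=\Delta^\sharp_I$ and of the outer factors as $p^\sharp_{u,s}$ and $p^\sharp_{t,v}$ via Observation \ref{hasmas}. If anything, you spell out the projection identifications slightly more explicitly than the paper, which leaves them implicit in its final displayed step.
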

\begin{proof}
We assume that $0<u< s<t< v$; the remaining cases can be treated similarly. Let $J=\underline{I}\cup I^{\times}\cup \overline{I}$ be the decomposition of $J$ with respect to $I$. Using (\ref{compas}) twice, for any $x\in \m{A}_I$ we have that\begin{eqnarray*}&&\Delta_J^\sharp\Delta_{I,J}^\times (x)=
(\Delta^\sharp_{u,s,v})^{-1}\left(\Delta_{\underline{I}}^\sharp \otimes \Delta_{I^{\times}\cup \overline{I}}^\sharp\right)\left({p}_{\underline{I}}\otimes \Delta_{I,I^{\times}}(x) \otimes {p}_{\overline{I}} \right)
\\&=&(\Delta^\sharp_{u,s,v})^{-1}\left(\Delta_{\underline{I}}^\sharp \otimes (\Delta^\sharp_{s,t,v})^{-1}\left(\Delta_{I^{\comp}}^\sharp\otimes \Delta_{ \overline{I}}^\sharp\right)\right)\left({p}_{\underline{I}}\otimes \Delta_{I,I^{\times}}(x) \otimes {p}_{\overline{I}} \right)
\\&=&(\Delta^\sharp_{u,s,v})^{-1}\left(\op{id}_{\m{A}_{u,s}^\sharp}\otimes (\Delta^\sharp_{s,t,v})^{-1} \right)\left(\Delta_{\underline{I}}^\sharp \otimes \Delta_{I^{\times}}^\sharp\otimes \Delta_{ \overline{I}}^\sharp\right)\left({p}_{\underline{I}}\otimes \Delta_{I,I^{\times}}(x) \otimes {p}_{\overline{I}} \right)
\\
&=&(\Delta^\sharp_{u,s,v})^{-1}\left(\op{id}_{\m{A}_{u,s}^\sharp}\otimes \Delta^\sharp_{s,t,v}\right)^{-1}\left({p}^\sharp_{u,s}\otimes \Delta_{I}^\sharp (x) \otimes {p}^\sharp_{t,v} \right)\\&=&\Delta^\sharp_{(s,t),(u,v)}\Delta^\sharp_I(x),
\end{eqnarray*}
which proves our claim.\end{proof}
\begin{corollary}\label{Nov26}Let $\cali{A}=\left(\{\m{A}_{s,t}\}_{0<s<t}, \{\Delta_{r,s,t}\}_{0<r<s<t}\right)$ be a unital $C^*$-subproduct system with unit $\{p_{s,t}\}_{0<s<t}$. For any two partitions $I\in\cali{P}_{s,t}$, $J\in\cali{P}_{u,v}$, $I\subseteq J$, where $0<u\leq s<t\leq v$, the diagram
\begin{eqnarray*}\label{pisubF}
\xymatrix{
\m{A}_J  \ar[r]^-{\Delta_J^\sharp} & \m{A}^\sharp_{u,v} \ar[rd]^-{\Delta_{u,v}^{\diamond}}&\\
&&\m{A}^\diamond_{\{p_{s,t}\}}\\
\m{A}_I \ar[r]_-{\Delta^\sharp_I}\ar[uu]^-{\Delta^{\times}_{I,J}} & \m{A}^\sharp_{s,t}\ar[ru]_-{\Delta^{\diamond}_{s,t}}& }
\end{eqnarray*}
commutes, that is, $\Delta_{s,t}^{\diamond}\Delta_I^\sharp=\Delta^{\diamond}_{u,v}\Delta^\sharp_J\Delta_{I,J}^\times$.
\end{corollary}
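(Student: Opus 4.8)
The plan is to deduce this directly by chaining Proposition \ref{March23} with the defining compatibility relation of the inductive limit $\m{A}^\diamond_{\{p_{s,t}\}}$. Recall from Definition \ref{Oct11} that the connecting *-monomorphisms $\Delta^\diamond_{s,t}:\m{A}^\sharp_{s,t}\to\m{A}^\diamond_{\{p_{s,t}\}}$ of this inductive limit satisfy the universal compatibility condition
$$\Delta^\diamond_{u,v}\,\Delta^\sharp_{(s,t),(u,v)}=\Delta^\diamond_{s,t},$$
for every inclusion of open intervals $(s,t)\subseteq(u,v)$ with $0<u\leq s<t\leq v$; this is precisely the statement that the family $\{\Delta^\diamond_{s,t}\}_{0<s<t}$ is compatible with the connecting maps $\Delta^\sharp_{(s,t),(u,v)}$ of Proposition \ref{Nov06}.

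With this in hand, I would start from the right-hand side of the claimed identity and substitute. By Proposition \ref{March23}, applied to the partitions $I\in\cali{P}_{s,t}$ and $J\in\cali{P}_{u,v}$ with $I\subseteq J$, we have $\Delta^\sharp_J\Delta^\times_{I,J}=\Delta^\sharp_{(s,t),(u,v)}\Delta^\sharp_I$. Composing on the left with $\Delta^\diamond_{u,v}$ therefore gives
$$\Delta^\diamond_{u,v}\,\Delta^\sharp_J\,\Delta^\times_{I,J}=\Delta^\diamond_{u,v}\,\Delta^\sharp_{(s,t),(u,v)}\,\Delta^\sharp_I.$$
Applying the inductive limit compatibility condition to the factor $\Delta^\diamond_{u,v}\,\Delta^\sharp_{(s,t),(u,v)}$ then collapses it to $\Delta^\diamond_{s,t}$, yielding
$$\Delta^\diamond_{u,v}\,\Delta^\sharp_J\,\Delta^\times_{I,J}=\Delta^\diamond_{s,t}\,\Delta^\sharp_I,$$
which is exactly the asserted commutativity of the diagram.

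Since both ingredients are already established, there is essentially no obstacle here: the result is a formal consequence of the two identities and requires no genuine case analysis. The degenerate cases (where $u=s$ or $t=v$) are covered automatically, since in those cases $\Delta^\sharp_{(s,t),(u,v)}$ reduces to an identity-like embedding and Proposition \ref{March23} was already proved in that same generality. The only point worth stating explicitly in the write-up is that the compatibility relation $\Delta^\diamond_{u,v}\Delta^\sharp_{(s,t),(u,v)}=\Delta^\diamond_{s,t}$ is part of the very definition of the connecting maps of the limit $\m{A}^\diamond_{\{p_{s,t}\}}$, so that the final collapse is justified by the universal property rather than by a separate computation.
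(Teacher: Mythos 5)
Your proposal is correct and is essentially the paper's own proof: the paper likewise writes $\Delta_{s,t}^{\diamond}\Delta_I^\sharp=\Delta_{u,v}^{\diamond}\Delta^\sharp_{(s,t),(u,v)}\Delta_I^\sharp=\Delta_{u,v}^{\diamond}\Delta^\sharp_J\Delta_{I,J}^\times$, combining the inductive-limit compatibility $\Delta^\diamond_{u,v}\Delta^\sharp_{(s,t),(u,v)}=\Delta^\diamond_{s,t}$ with Proposition \ref{March23}. You merely read the same chain of identities in the opposite direction.
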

\begin{proof}
We have  $\Delta_{s,t}^{\diamond}\Delta_I^\sharp=\Delta_{u,v}^{\diamond}\Delta^\sharp_{(s,t),(u,v)}\Delta_I^\sharp=\Delta_{u,v}^{\diamond}\Delta^\sharp_J\Delta_{I,J}^\times.$
\end{proof}
Using the above results, we can now show that the $C^*$-algebras $C^*_{\{p_{s,t}\}}(\cali{A})$ and $\m{A}^{\diamond}_{\{p_{s,t}\}}$ are isomorphic. 
\begin{theorem}\label{March24}
Let $\cali{A}=\left(\{\m{A}_{s,t}\}_{0<s<t}, \{\Delta_{r,s,t}\}_{0<r<s<t}\right)$ be a unital $C^*$-  
 subproduct system with unit $\{p_{s,t}\}_{0<s<t}$. There exists a *-isomorphism $\delta: C^*_{\{p_{s,t}\}}(\cali{A})\to \m{A}^{\diamond}_{\{p_{s,t}\}}$, uniquely determined by the condition \begin{eqnarray}\label{Nov26a}\delta\Delta_I^\times=\Delta^{\diamond}_{s,t}\Delta^\sharp_I,\end{eqnarray} for all $I\in \cali{P}_{s,t}$ and $0<s<t$.
\end{theorem}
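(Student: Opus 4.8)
The plan is to realize $\delta$ through the universal property of the inductive limit $C^*_{\{p_{s,t}\}}(\cali{A})=\limind\{(\m{A}_I,\Delta^\times_{I,J})\}$ over $\cali{P}$, taking $\m{A}^{\diamond}_{\{p_{s,t}\}}$ as the target algebra. Every $I\in\cali{P}$ has a smallest element $s=\min I$ and a largest element $t=\max I$, so $I\in\cali{P}_{s,t}$ and the composite $\phi_I:=\Delta^{\diamond}_{s,t}\Delta^\sharp_I:\m{A}_I\to\m{A}^{\diamond}_{\{p_{s,t}\}}$ is a well-defined $*$-monomorphism. To invoke the universal property I first need to verify that $\{\phi_I\}_{I\in\cali{P}}$ is compatible with the connecting maps, i.e. that $\phi_J\Delta^\times_{I,J}=\phi_I$ for all $I\subseteq J$ in $\cali{P}$. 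Writing $I\in\cali{P}_{s,t}$ and $J\in\cali{P}_{u,v}$ with $0<u\leq s<t\leq v$, this is exactly the identity $\Delta^{\diamond}_{s,t}\Delta^\sharp_I=\Delta^{\diamond}_{u,v}\Delta^\sharp_J\Delta^\times_{I,J}$ established in Corollary \ref{Nov26}. Hence the universal property produces a unique $*$-homomorphism $\delta:C^*_{\{p_{s,t}\}}(\cali{A})\to\m{A}^{\diamond}_{\{p_{s,t}\}}$ with $\delta\Delta^\times_I=\Delta^{\diamond}_{s,t}\Delta^\sharp_I$ for every $I$, which is precisely (\ref{Nov26a}); uniqueness of $\delta$ subject to (\ref{Nov26a}) is immediate, since this condition already determines $\delta$ on the dense set $\bigcup_{I\in\cali{P}}\Delta^\times_I(\m{A}_I)$.

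It then remains to show that $\delta$ is bijective. For injectivity I would argue that $\delta$ is isometric. If $x\in\m{A}_I$ with $I\in\cali{P}_{s,t}$, then $\|\delta(\Delta^\times_I(x))\|=\|\Delta^{\diamond}_{s,t}\Delta^\sharp_I(x)\|=\|x\|$ because $\Delta^{\diamond}_{s,t}$ and $\Delta^\sharp_I$ are $*$-monomorphisms, while $\|\Delta^\times_I(x)\|=\|x\|$ since $\Delta^\times_I$ is a $*$-monomorphism; thus $\delta$ is isometric on each range algebra $\Delta^\times_I(\m{A}_I)$. As these range algebras have dense union in $C^*_{\{p_{s,t}\}}(\cali{A})$ and $\delta$ is continuous, $\delta$ is isometric everywhere, hence injective. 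In particular its range is closed.

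The surjectivity of $\delta$ is the step I expect to demand the most care, since it must exploit the nested inductive-limit structure of $\m{A}^{\diamond}_{\{p_{s,t}\}}$. The range of $\delta$, being closed, contains the closure of $\bigcup_{I\in\cali{P}}\delta(\Delta^\times_I(\m{A}_I))=\bigcup_{I\in\cali{P}}\Delta^{\diamond}_{s,t}\Delta^\sharp_I(\m{A}_I)$. Fixing an open interval $(s,t)$ and using the density $\m{A}^\sharp_{s,t}=\overline{\bigcup_{I\in\cali{P}_{s,t}}\Delta^\sharp_I(\m{A}_I)}$ together with continuity of $\Delta^{\diamond}_{s,t}$, I find that $\Delta^{\diamond}_{s,t}(\m{A}^\sharp_{s,t})$ lies in $\overline{\bigcup_{I\in\cali{P}_{s,t}}\Delta^{\diamond}_{s,t}\Delta^\sharp_I(\m{A}_I)}$, hence in the range of $\delta$. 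Taking the union over all intervals and invoking the density $\m{A}^{\diamond}_{\{p_{s,t}\}}=\overline{\bigcup_{(s,t)}\Delta^{\diamond}_{s,t}(\m{A}^\sharp_{s,t})}$ built into the inductive-limit construction of Definition \ref{Oct11}, I conclude that the closed range of $\delta$ is all of $\m{A}^{\diamond}_{\{p_{s,t}\}}$. Therefore $\delta$ is a surjective isometry, i.e. a $*$-isomorphism, which completes the argument. The entire proof thus reduces to the compatibility furnished by Corollary \ref{Nov26} followed by two routine density arguments, with the iterated-limit bookkeeping in the surjectivity step being the only genuinely delicate point.
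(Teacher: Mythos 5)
Your proposal is correct and follows essentially the same route as the paper: existence and uniqueness of $\delta$ via Corollary \ref{Nov26} and the universal property of the inductive limit, followed by a density argument (the paper's explicit $\varepsilon/2$-approximation is just your closed-range formulation unwound) showing that $\bigcup_{(s,t)}\bigcup_{I\in\cali{P}_{s,t}}\Delta^{\diamond}_{s,t}\Delta^\sharp_I(\m{A}_I)$ is dense in $\m{A}^{\diamond}_{\{p_{s,t}\}}$. Your only addition is spelling out the isometry argument for injectivity, which the paper leaves implicit in asserting that $\delta$ is a *-monomorphism.
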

\begin{proof} We deduce from Corollary \ref{Nov26} and the universal property of the inductive limit that there exists a unique *-monomorphism $\delta: C^*_{\{p_{s,t}\}}(\cali{A})\to \m{A}^{\diamond}_{\{p_{s,t}\}}$ that satisfies (\ref{Nov26a}).
To show that $\delta$ is an isomorphism, it is enough to show that the set $\bigcup_{ (s,t)\in I_{(0,\infty)}}\bigcup_{I\in\cali{P}_{s,t}}\Delta^{\diamond}_{s,t}\Delta^\sharp_I\left(\m{A}_I \right)$ is everywhere dense in $ \m{A}^{\diamond}_{\{p_{s,t}\}}$.
For this purpose, let $x\in\m{A}^{\diamond}$ and $\varepsilon>0$ be chosen arbitrarily. Because  $\bigcup_{(s,t)\in I_{(0,\infty)}}\Delta^{\diamond}_{s,t}\left(\m{A}_{s,t}^\sharp \right)$ is everywhere dense in $ \m{A}^{\diamond}_{\{p_{s,t}\}}$, we have $\|x-\Delta^{\diamond}_{s_0,t_0}(y)\|<\frac{\varepsilon}{2}$, for some open interval $(s_0,t_0)\in I_{(0,\infty)}$ and some element $y\in \m{A}_{s_0,t_0}^\sharp$. Similarly, because $\bigcup_{I\in\cali{P}_{s_0,t_0}}\Delta^\sharp_I\left(\m{A}_I \right)$ is everywhere dense in $\m{A}^{\sharp}_{s_0,t_0}$, one can find a partition $I_0\in \cali{P}_{s_0,t_0}$ and an element $z\in \m{A}_{I_0}$ such that $\|\Delta^{\diamond}_{s_0,t_0}(y)-\Delta^{\diamond}_{s_0,t_0}\Delta^\sharp_{I_0}(z)\|< \frac{\varepsilon}{2}$. Consequently, we obtain that $\|x-\Delta^{\diamond}_{s_0,t_0}\Delta^\sharp_{I_0}(z)\|< \varepsilon$, so $\bigcup_{ (s,t)\in I_{(0,\infty)}}\bigcup_{I\in\cali{P}_{s,t}}\Delta^{\diamond}_{s,t}\Delta^\sharp_I\left(\m{A}_I \right)$ is everywhere dense in $ \m{A}^{\diamond}_{\{p_{s,t}\}}$. 
\end{proof}
We are now ready to prove the main result of this section.
\begin{theorem} \label{April2i} Let $\cali{A}=\left(\{\m{A}_{s,t}\}_{0<s<t}, \{\Delta_{r,s,t}\}_{0<r<s<t}\right)$ be a unital $C^*$-  
 subproduct system with unit $\{p_{s,t}\}_{0<s<t}$. There exists a family $\{\Delta_s\}_{s>0}$ of *-monomorphisms $\Delta_s: C^*_{\{p_{s,t}\}}(\cali{A})\to  C^*_{\{p_{s,t}\}}(\cali{A})\otimes  C^*_{\{p_{s,t}\}}(\cali{A})$ that satisfy the identity
\begin{eqnarray}\label{exx}(\Delta_r\otimes\op{id})\Delta_s=(\op{id}\otimes\Delta_s)\Delta_r,\end{eqnarray} for all $0<r<s$.
\end{theorem}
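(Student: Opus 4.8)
The plan is to transport the problem to the isomorphic picture $\m{A}^{\diamond}_{\{p_{s,t}\}}$ furnished by Theorem \ref{March24}, where the full product-system structure of the inductive dilation $\cali{A}^\sharp$ is directly at hand, and to define each $\Delta_s$ by \emph{cutting at the point $s$} via the canonical isomorphisms $\Delta^\sharp_{a,s,b}:\m{A}_{a,b}^\sharp\to\m{A}_{a,s}^\sharp\otimes\m{A}_{s,b}^\sharp$. Fixing $s>0$, I would first define, on the dense union $\bigcup_{a<s<b}\Delta^{\diamond}_{a,b}(\m{A}_{a,b}^\sharp)$, a map $\iota_s$ into $\m{A}^{\diamond}_{\{p_{s,t}\}}\otimes\m{A}^{\diamond}_{\{p_{s,t}\}}$ by
$$\iota_s\bigl(\Delta^{\diamond}_{a,b}(\xi)\bigr)=\bigl(\Delta^{\diamond}_{a,s}\otimes\Delta^{\diamond}_{s,b}\bigr)\,\Delta^\sharp_{a,s,b}(\xi),\qquad \xi\in\m{A}_{a,b}^\sharp,\ a<s<b,$$
and then set $\Delta_s=(\delta^{-1}\otimes\delta^{-1})\,\iota_s\,\delta$, where $\delta$ is the $*$-isomorphism of Theorem \ref{March24}. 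Each building block $(\Delta^{\diamond}_{a,s}\otimes\Delta^{\diamond}_{s,b})\Delta^\sharp_{a,s,b}$ is the composition of a $*$-isomorphism with an injective tensor product of $*$-monomorphisms, hence an isometric $*$-monomorphism; so once $\iota_s$ is shown to be well defined it extends isometrically to all of $\m{A}^{\diamond}_{\{p_{s,t}\}}$, and $\Delta_s$ is automatically a $*$-monomorphism.

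The main obstacle is the well-definedness of $\iota_s$, i.e.\ its compatibility with the connecting $*$-monomorphisms $\Delta^\sharp_{(a,b),(a',b')}$ that define $\m{A}^{\diamond}_{\{p_{s,t}\}}$; the representative case is $a'<a<s<b<b'$. Here I would substitute the explicit formula for $\Delta^\sharp_{(a,b),(a',b')}$ from Proposition \ref{Nov06} and apply the co-associativity law (\ref{Nov10d}) for the product system $\cali{A}^\sharp$ twice: once at $a'<a<s<b'$ to rewrite $\Delta^\sharp_{a',s,b'}(\Delta^\sharp_{a',a,b'})^{-1}$, and once at $a<s<b<b'$ to rewrite $\Delta^\sharp_{a,s,b'}(\Delta^\sharp_{a,b,b'})^{-1}$. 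After cancelling factors this should collapse to
$$\Delta^\sharp_{a',s,b'}\Delta^\sharp_{(a,b),(a',b')}(\xi)=\bigl(\Delta^\sharp_{(a,s),(a',s)}\otimes\Delta^\sharp_{(s,b),(s,b')}\bigr)\Delta^\sharp_{a,s,b}(\xi),$$
the point being to recognize the two surviving terminal factors $(\Delta^\sharp_{a',a,s})^{-1}(p^\sharp_{a',a}\otimes\,\cdot\,)$ and $(\Delta^\sharp_{s,b,b'})^{-1}(\,\cdot\,\otimes p^\sharp_{b,b'})$ as the connecting maps $\Delta^\sharp_{(a,s),(a',s)}$ and $\Delta^\sharp_{(s,b),(s,b')}$ of Proposition \ref{Nov06}. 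Composing with $\Delta^{\diamond}_{a',s}\otimes\Delta^{\diamond}_{s,b'}$ and invoking the inductive-limit relations $\Delta^{\diamond}_{a',s}\Delta^\sharp_{(a,s),(a',s)}=\Delta^{\diamond}_{a,s}$ and $\Delta^{\diamond}_{s,b'}\Delta^\sharp_{(s,b),(s,b')}=\Delta^{\diamond}_{s,b}$ then reduces the whole expression to $(\Delta^{\diamond}_{a,s}\otimes\Delta^{\diamond}_{s,b})\Delta^\sharp_{a,s,b}(\xi)$, as required; the non-strict boundary cases $a'=a$ or $b=b'$ follow the same pattern with fewer factors, exactly as in the proof of Proposition \ref{Nov06}.

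Finally, I would establish the deformed co-associativity (\ref{exx}) by evaluating both sides on a dense element $\Delta^{\diamond}_{a,b}(\xi)$ with $a<r<s<b$ and unwinding the two compositions through the definition of $\iota_s$. For $(\Delta_r\otimes\op{id})\Delta_s$ one first splits at $s$ and then at $r$ inside the left-hand factor $\m{A}_{a,s}^\sharp$, while for $(\op{id}\otimes\Delta_s)\Delta_r$ one first splits at $r$ and then at $s$ inside the right-hand factor $\m{A}_{r,b}^\sharp$. Both are thus seen to equal $(\Delta^{\diamond}_{a,r}\otimes\Delta^{\diamond}_{r,s}\otimes\Delta^{\diamond}_{s,b})$ applied, respectively, to $(\Delta^\sharp_{a,r,s}\otimes\op{id})\Delta^\sharp_{a,s,b}(\xi)$ and to $(\op{id}\otimes\Delta^\sharp_{r,s,b})\Delta^\sharp_{a,r,b}(\xi)$, and these two elements agree by the co-associativity law (\ref{Nov10d}) for $\cali{A}^\sharp$ at the points $a<r<s<b$. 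Since both sides of (\ref{exx}) are $*$-homomorphisms, hence continuous, and agree on the dense union, the identity (\ref{exx}) holds on all of $C^*_{\{p_{s,t}\}}(\cali{A})$, completing the construction of the family $\{\Delta_s\}_{s>0}$.
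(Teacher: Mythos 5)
Your proposal is correct, and it follows the same overall architecture as the paper's proof: pass to $\m{A}^{\diamond}_{\{p_{s,t}\}}$ via the isomorphism $\delta$ of Theorem \ref{March24}, build a ``cut at $s$'' monomorphism $\iota_s$, and conjugate by $\delta$. The difference lies in how $\iota_s$ is built and verified, and it is a genuine one. The paper constructs $\iota_s$ (landing directly in $C^*_{\{p_{s,t}\}}(\cali{A})\otimes C^*_{\{p_{s,t}\}}(\cali{A})$) by two applications of the universal property of the inductive limit, with every verification carried out at the level of \emph{partitions}: the intermediate maps $\iota_{r,s,t}$ are pinned down by $\iota_{r,s,t}\Delta^\sharp_{I\cup J}=\Delta^\times_I\otimes\Delta^\times_J$, their compatibility with $\Delta^\sharp_{(r,t),(u,v)}$ is checked on partition elements using Proposition \ref{March23}, and the deformed co-associativity (\ref{exx}) is checked via the partition-level identity $\Delta_s\Delta^\times_{I\cup J}=\Delta^\times_I\otimes\Delta^\times_J$ (formula (\ref{Nov10c}) in the paper). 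You instead define $\iota_s$ (landing in $\m{A}^{\diamond}_{\{p_{s,t}\}}\otimes\m{A}^{\diamond}_{\{p_{s,t}\}}$) by an explicit formula on the dense union of the $\Delta^{\diamond}_{a,b}(\m{A}^\sharp_{a,b})$ and verify both well-definedness and (\ref{exx}) entirely at the level of the dilated algebras, using only the co-associativity of $\cali{A}^\sharp$ (Theorem \ref{star-isomorphism theorem}) and the explicit connecting maps of Proposition \ref{Nov06}; your key cancellation identity is correct, and the double application of co-associativity you describe is essentially the same computation that proves Proposition \ref{Nov06} itself, so Proposition \ref{March23} and the partition bookkeeping are never needed. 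What your route buys is a cleaner dependency structure and no descent to partitions; what the paper's route buys is that the identity (\ref{Nov10c}) comes out explicitly, and the paper relies on it later (Proposition \ref{DK2} and Theorem \ref{DK1}) --- with your definition it still holds, but must be derived separately from (\ref{Nov26a}) and (\ref{compas}). Two small points you should make explicit: the intervals containing $s$ form a cofinal, directed subfamily of $(I_{(0,\infty)},\subseteq)$, so your union is indeed a dense $*$-subalgebra of $\m{A}^{\diamond}_{\{p_{s,t}\}}$; and since your $\Delta_s$ is $(\delta^{-1}\otimes\delta^{-1})\iota_s\delta$, the identity (\ref{exx}) should be checked in the equivalent form $(\iota_r\otimes\op{id})\iota_s=(\op{id}\otimes\iota_s)\iota_r$ on elements $\Delta^{\diamond}_{a,b}(\xi)$, rather than literally ``evaluating both sides of (\ref{exx})'' on such elements, which live in $\m{A}^{\diamond}_{\{p_{s,t}\}}$ and not in $C^*_{\{p_{s,t}\}}(\cali{A})$.
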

\begin{proof}
First, we construct a family $\{\iota_s\}_{s>0}$ of  *-monomorphism $\iota_s:\m{A}^{\diamond}_{\{p_{s,t}\}}\to  C^*_{\{p_{s,t}\}}(\cali{A})\otimes C^*_{\{p_{s,t}\}}(\cali{A})$ that is compatible with the inductive limit structures of the $C^*$-algebras $\m{A}^{\diamond}_{\{p_{s,t}\}}$ and $C^*_{\{p_{s,t}\}}(\cali{A})\otimes C^*_{\{p_{s,t}\}}(\cali{A})$. For this purpose, we identify the $C^*$-algebra $ C^*_{\{p_{s,t}\}}(\cali{A})\otimes C^*_{\{p_{s,t}\}}(\cali{A})$ with the inductive limit of the $C^*$-inductive system $\{(\m{A}_I\otimes \m{A}_J,\Delta^\times_{I, I'}\otimes \Delta^\times_{J, J'})\,|\, (I, J) \in \cali{P}\times \cali{P}\}$ over the cartesian product $\cali{P}\times \cali{P}$, endowed with the product order, that is
$$C^*_{\{p_{s,t}\}}(\cali{A})\otimes  C^*_{\{p_{s,t}\}}(\cali{A})=\limind\, \Bigl\{(\m{A}_I\otimes \m{A}_J,\Delta^\times_{I, I'}\otimes \Delta^\times_{J, J'})\,|\, (I, J) \in \cali{P}\times \cali{P}\Bigr\}.$$ Furthermore, as in the proof of Theorem \ref{star-isomorphism theorem}, we make the identification $$\m{A}^\sharp_{r,t}=\limind\{(\m{A}_{I}\otimes \m{A}_{J} ,\Delta_{I,I^{\prime}}\otimes \Delta_{J, J^{\prime}} )\,|\, (I,J) \in \cali{P}_{r,s}\times\cali{P}_{s,t}\},$$
for all $0<r<s<t$. Because $\Delta^\times_{I, I'}\otimes \Delta^\times_{J, J'}=\Delta_{I, I'}\otimes \Delta_{J, J'}$, for all $I,\, I'\in  \cali{P}_{r,s}$, $I\subseteq I'$, and $J,\, J'\in  \cali{P}_{s,t}$, $J\subseteq J'$, we infer from the universal property of the inductive limit that there exists a *-monomorphisms $\iota_{r,s,t}:\m{A}^\sharp_{r,t}\to C^*_{\{p_{s,t}\}}(\cali{A})\otimes  C^*_{\{p_{s,t}\}}(\cali{A})$, uniquely determined by the condition
\begin{eqnarray}\label{Nov10a}
\iota_{r,s,t}\Delta^\sharp_{I\cup J}= \Delta^\times_{I}\otimes \Delta^\times_{J},
\end{eqnarray}
for all $I\in\cali{P}_{r,s}$, $J\in  \cali{P}_{s,t}$, and  $0<r<s<t$. We claim that the diagram 
\begin{eqnarray*}\label{pisubF}
\xymatrix{
\m{A}_{u,v}^\sharp  \ar[r]^-{\iota_{u,s,v}} & C^*_{\{p_{s,t}\}}(\cali{A})\otimes  C^*_{\{p_{s,t}\}}(\cali{A}) \\
\m{A}_{r,t}^\sharp  \ar[ru]_-{\iota_{r,s,t}}\ar[u]^-{\Delta^\sharp_{(r,t),(u,v)}} &}
\end{eqnarray*}
commutes for all  $0<u\leq r<s<t\leq v$, that is,  $\iota_{u,s,v}\Delta^\sharp_{(r,t),(u,v)}=\iota_{r,s,t}$. Indeed, let $I\in\cali{P}_{r,s}$ and $J\in \cali{P}_{s,t}$, where $0<r<s<t$. Consider the partions $uI:=\{u\}\cup I\in\cali{P}_{u,s}$ and $Jv:=J\cup\{v\}\in\cali{P}_{s,v}$. Using Proposition \ref{March23}, we have
\begin{eqnarray*}
\iota_{u,s,v}\Delta^\sharp_{(r,t),(u,v)}\Delta^\sharp_{I\cup J}&=&\iota_{u,s,v}\Delta^\sharp_{uI\cup Jv}\Delta^\times_{I\cup J,uI\cup Jv}= (\Delta^\times_{uI}\otimes \Delta^\times_{Jv})\Delta^\times_{I\cup J,uI\cup Jv}\\
&=&(\Delta^\times_{uI}\otimes \Delta^\times_{Jv})(\Delta^\times_{I,uI}\otimes \Delta^\times_{J,Jv})= \Delta^\times_{I}\otimes \Delta^\times_{J}\\&=&
\iota_{r,s,t}\Delta^\sharp_{I\cup J}.
\end{eqnarray*}
Because the set $\bigcup _{(I,J)\in \m{P}_{r,s}\times \m{P}_{s,t}}\Delta^\sharp_{I\cup J}(\m{A}_I\otimes \m{A}_J)$ is everywhere dense in $\m{A}_{r,t}^\sharp$, the commutativity of the diagram follows.

Using the fact the set of all open intervals $(u,v)$ containing the given interval $(r,t)$ is a cofinal subset of the partially ordered set $(I_{(0,\infty)},\subseteq)$, we have $$
\m{A}^{\diamond}_{\{p_{s,t}\}}=\limind
\left\{\m{A}_{u,v}^\sharp\,|\,(r,t)\subseteq (u,v)\in I_{(0,\infty)}\right\}.$$ As a result, we conclude, using the universal property of the inductive limit once again, that there is a unique *-monomorphism 
 $\iota_s:\m{A}^{\diamond}_{\{p_{s,t}\}}\to  C^*_{\{p_{s,t}\}}(\cali{A})\otimes C^*_{\{p_{s,t}\}}(\cali{A})$ that makes the diagram 
 \begin{eqnarray}\label{Nov10b}
\xymatrix{
\m{A}^{\diamond}_{\{p_{s,t}\}}  \ar[r]^-{\iota_s} & C^*_{\{p_{s,t}\}}(\cali{A})\otimes  C^*_{\{p_{s,t}\}}(\cali{A}) \\
\m{A}_{r,t}^\sharp  \ar[ru]_-{\iota_{r,s,t}}\ar[u]^-{\Delta^{\diamond}_{r,t}} &}
\end{eqnarray}
commutative for all  $0< r<s<t$, that is,  $\iota_s\Delta^{\diamond}_{r,t}=\iota_{r,s,t}$. 

Next, we compose the  *-monomorphism $\iota_s$ with the *-isomorphism $\delta$ of Theorem \ref{March24}, thus obtaining a *-monorphism $\Delta_s:C^*_{\{p_{s,t}\}}(\cali{A})\to C^*_{\{p_{s,t}\}}(\cali{A})\otimes C^*_{\{p_{s,t}\}}(\cali{A})$,
$$\Delta_s=\iota_s\delta,$$ for all $s>0$. 
Using and (\ref{Nov10b}) and (\ref{Nov10a}), one can immediately see that $\Delta_s$ satisfies the indentity \begin{eqnarray}\label{Nov10c}\Delta_s\Delta^\times_{I\cup J}=\Delta^\times_I\otimes \Delta^\times_J,\end{eqnarray} for all $I\in\cali{P}_{r,s}$, $J\in\cali{P}_{s,t}$, and $0<r<s<t$. Indeed, $\Delta_s\Delta^\times_{I\cup J}=\iota_s\delta\Delta^\times_{I\cup J}=\iota_s\Delta^{\diamond}_{r,t}\Delta^\sharp_{I\cup J}=\iota_{r,s,t}\Delta^\sharp_{I\cup J}= \Delta^\times_{I}\otimes \Delta^\times_{J},$ as needed.

It remains to be shown that the family $\{\Delta_s\}_{s>0}$ thus obtained satisfies (\ref{exx}). For this purpose, let  $0<r<s$ and $I\in\cali{P}$ be an arbitrary partition that contains both $r$ and $s$. Suppose that  $I\in\cali{P}_{p,t}$ for some  real numbers  $0<p<r<s<t$, and let $J\in\cali{P}_{p,r}$, $K\in\cali{P}_{r,s}$, $L\in\cali{P}_{s,t}$ be such that $I=J\cup K\cup L$. Using (\ref{Nov10c}) repeatedly, we have\begin{eqnarray*}
\left(\Delta_r\otimes\op{id}\right)\Delta_s\Delta_{I}^\times&=&\left(\Delta_r\otimes\op{id}\right) \Delta^\times_{J\cup K}\otimes \Delta^\times_{L}
=\Delta_r\Delta^\times_{J\cup K}\otimes \Delta^\times_{L}\\&=&\Delta^\times_{J}\otimes \Delta^\times_K\otimes \Delta^\times_{L}.
\end{eqnarray*}
Similarly, $(\op{id}\otimes\Delta_s)\Delta_r\Delta_{I}^\times=\Delta^\times_{J}\otimes \Delta^\times_K\otimes \Delta^\times_{L}$. Because the set $\{\Delta_{I}^\times (x)\,|\,x\in \m{A}_I,\,  I\in\cali{P},\,s,\,t\in I\}$ is everywhere dense in $C^*_{\{p_{s,t}\}}(\cali{A})$, the identity is satisfied.
\end{proof}
\begin{definition}
The family $\{\Delta_s\}_{s>0}$ of *-monomorphisms constructed above will be referred to as the one-parameter co-multiplication of the $C^*$-algebra $C^*_{\{p_{s,t}\}}(\cali{A})$.
\end{definition}

\begin{observation}\label{avner} (a) Let $p=\Delta^\times_{s,t}(p_{p,s})$. Then the projection $p$ is independent of the choice of the pair $0<s<t$, and $\Delta_s(p)=p\otimes p$, for all $s>0$.\\
(b) Consider two $C^*$-subproduct systems $\cali{A}=\left(\{\m{A}_{s,t}\}_{0<s<t}, \{\Delta_{r,s,t}\}_{0<r<s<t}\right)$ and $\cali{B}=\left(\{\m{B}_{s,t}\}_{0<s<t}, \{\Gamma_{r,s,t}\}_{0<r<s<t}\right)$, with units  
$\{p_{s,t}\}_{0<s<t}$ and $\{q_{s,t}\}_{0<s<t}$ respectively. If $\{\theta_{s,t}\}_{0<s<t}$ is a  monomorphism from $\cali{A}$ to $\cali{B}$ satisfying $\theta_{s,t}(p_{s,t})=q_{s,t}$, for all $0<s<t$, then the induced *-monomorphism of $C^*$-algebras $\theta:C^*_{\{p_{s,t}\}}(\cali{A})\to C^*_{\{q_{s,t}\}}(\cali{B})$, given by Proposition \ref{categorical theorem}, intertwines the one-parameter co-multiplications $\{\Delta_s\}_{s>0}$ and $\{\Gamma_s\}_{s>0}$ in the sense that $(\theta\otimes\theta)\Delta_s=\Gamma_s\theta$, for all $s>0$. \end{observation}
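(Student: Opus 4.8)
The plan is to prove the two assertions separately, in each case reducing to the defining compatibility identities of the connecting *-monomorphisms and then invoking a density argument. For part (a), I would first show that $\Delta^\times_I(p_I)$ is independent of the partition $I\in\cali{P}$, where $p_I$ is the product projection of Observation \ref{hasmas}. The crucial point is the unit-compatibility relation $\Delta^\times_{I,J}(p_I)=p_J$ for $I\subseteq J$: writing the decomposition $J=\underline{I}\cup I^{\times}\cup \overline{I}$, the formula for $\Delta^\times_{I,J}$ from Proposition \ref{lemma inductive limit} together with $\Delta_{I,I^{\times}}(p_I)=p_{I^{\times}}$ (Observation \ref{hasmas}) gives $\Delta^\times_{I,J}(p_I)=p_{\underline{I}}\otimes p_{I^{\times}}\otimes p_{\overline{I}}=p_J$. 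Combining this with the inductive-limit relation $\Delta^\times_J\Delta^\times_{I,J}=\Delta^\times_I$ yields $\Delta^\times_I(p_I)=\Delta^\times_J(p_J)$ whenever $I\subseteq J$. Since any two partitions $I,I'\in\cali{P}$ admit the common refinement $J=I\cup I'$, it follows that $\Delta^\times_I(p_I)$ takes one and the same value $p$ for every $I$; in particular $p=\Delta^\times_{s,t}(p_{s,t})$ does not depend on the pair $0<s<t$. To obtain $\Delta_s(p)=p\otimes p$, I would then feed the trivial partitions into the defining identity (\ref{Nov10c}): fixing $0<r<s<t$ and taking $I=\{r,s\}\in\cali{P}_{r,s}$, $J=\{s,t\}\in\cali{P}_{s,t}$, one has $p_{I\cup J}=p_I\otimes p_J$ and, by the independence just established, $\Delta^\times_{I\cup J}(p_{I\cup J})=p$. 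Hence $\Delta_s(p)=\Delta_s\Delta^\times_{I\cup J}(p_I\otimes p_J)=(\Delta^\times_I\otimes \Delta^\times_J)(p_I\otimes p_J)=\Delta^\times_I(p_I)\otimes \Delta^\times_J(p_J)=p\otimes p$.

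For part (b), both $(\theta\otimes\theta)\Delta_s$ and $\Gamma_s\theta$ are *-homomorphisms from $C^*_{\{p_{s,t}\}}(\cali{A})$ into $C^*_{\{q_{s,t}\}}(\cali{B})\otimes C^*_{\{q_{s,t}\}}(\cali{B})$, so it suffices to verify that they agree on the dense family of elements $\Delta^\times_K(x)$ with $s\in K$. For such a $K$ with $\min K<s<\max K$, I would split $K=I\cup J$ at $s$, where $I\in\cali{P}_{\min K,\,s}$ and $J\in\cali{P}_{s,\,\max K}$. Using (\ref{Nov10c}) and the relation $\theta\Delta^\times_I=\Gamma^\times_I\theta_I$ from (\ref{Nov12}), the left-hand side becomes $(\theta\otimes\theta)(\Delta^\times_I\otimes \Delta^\times_J)=(\Gamma^\times_I\theta_I)\otimes(\Gamma^\times_J\theta_J)$. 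For the right-hand side, $\theta\Delta^\times_{I\cup J}=\Gamma^\times_{I\cup J}\theta_{I\cup J}$ together with the factorization $\theta_{I\cup J}=\theta_I\otimes\theta_J$ and the analogue of (\ref{Nov10c}) for $\cali{B}$ gives $\Gamma_s\Gamma^\times_{I\cup J}\theta_{I\cup J}=(\Gamma^\times_I\otimes\Gamma^\times_J)(\theta_I\otimes\theta_J)=(\Gamma^\times_I\theta_I)\otimes(\Gamma^\times_J\theta_J)$. The two expressions coincide, so by density $(\theta\otimes\theta)\Delta_s=\Gamma_s\theta$, as claimed.

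I expect the substance to present no real difficulty, since all the needed machinery is already in place. The only steps requiring genuine care are the bookkeeping in the density/cofinality argument of part (b) — namely, checking that the partitions containing $s$ and split at $s$ furnish a dense family, so that equality on that family is enough — and the correct tracking of the tensor identity $\theta_{I\cup J}=\theta_I\otimes\theta_J$ through the order isomorphism between $\cali{P}_{\min K,\,s}\times\cali{P}_{s,\,\max K}$ and the refinements of $I\cup J$ passing through $s$.
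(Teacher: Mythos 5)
Your proposal is correct, and it proceeds exactly along the lines the paper intends: the paper states this as an unproved observation, and the natural justification is precisely the one you give, combining the unit-compatibility $\Delta^\times_{I,J}(p_I)=p_J$ from Observation \ref{hasmas} and Proposition \ref{lemma inductive limit} with the inductive-limit relations, identity (\ref{Nov10c}), the intertwining (\ref{Nov12}), and a density/cofinality argument over partitions containing $s$ (the same device used at the end of the proof of Theorem \ref{April2i}). Both your partition-independence argument in (a) and the splitting-at-$s$ computation in (b) are sound as written.
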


\begin{example}\label{June08} We identify and describe in concrete terms the $C^*$-algebra $C^*_1(\cali{A}_{\op{com}})$ of a unital $C^*$-subproduct system of commutative 
unital $C^*$-algebras $\cali{A}_{\op{com}}$ (see  Example \ref{examp1}) with respect to the trivial unit $p_{s,t}=1_{C(X_{s,t})}$, $0<s<t$. Let $\left(\{X_{s,t}\}_{0<s<t}, \{\chi_{r,s,t}\}_{0<r<s<t} \right)$ be a two-parameter multiplicative system of compact Hausdorff spaces so that the functions $\chi_{r,s,t}$ are all surjective. For any two partitions $I,\,J\in\cali{P}$, $I\subseteq J$, let $\chi_{I,J}^\times: X_J\to X_I$ be the continuous surjection that is defined with respect to the decomposition $J=\underline{I}\cup I^{\times}\cup \overline{I}$ as
\begin{eqnarray*}\label{vineriseara}\chi_{I,J}^\times =\left\{\begin{array}{ll}\chi_{I,J}, &\mbox{if}\;I,J\in \cali{P}_{s,t},\;s,t>0\\
(\chi_{I,I^{\times}})\comp(\pi_{J, I^\times}),
 &\mbox{otherwise}\end{array}\right.
\end{eqnarray*} where $\chi_{I,J}$ is as in Example \ref{examp11}, and $\pi_{J, I^\times}:X_J\to X_{I^\times}$ is the projection of $X_J$ onto $ X_{I^\times}.$ The resulting system $
\Bigl\{(X_I,\chi_{I,J}^\times)\,|\,I,\,J \in \cali{P},\; I\subseteq J\Bigr\}$
is a projective system of compact Hausdorff spaces over the partially ordered set $(\cali{P}, \subseteq)$, and its projective limit $$X=\limproj \Bigl\{(X_I,\chi_{I,J}^\times)\,|\,I,\,J \in \cali{P},\; I\subseteq J\Bigr\}$$ is a non-empty compact Hausdorff space. Moreover, the commutative $C^*$-algebra $C(X)$ is *-isomorphic to the $C^*$-algebra $C^*_1(\cali{A}_{\op{com}})$ of the unital $C^*$-subproduct system $\cali{A}_{\op{com}}=\left(\{C(X_{s,t})\}_{0<s<t}, \{\Delta_{r,s,t}\}_{0<r<s<t}\right)$.

 For any real numbers $0<u\leq s<t\leq v$, we also consider the continuous surjection $\chi^\sharp _{(s,t),(u,v)}: X_{u,v}^\sharp\to X_{s,t}^\sharp $, defined as $$\chi^\sharp _{(s,t),(u,v)}=\pi_{s,t}\comp \left(\op{id}_{X_{u,s}^\sharp}\otimes \chi^\sharp_{s,t,v}\right)^{-1}\comp (\chi^\sharp_{u,s,v})^{-1},\;\mbox{if}\; u< s<t< v,$$ and similar to the definition of $\Delta^\sharp _{(s,t),(u,v)}$ given in Proposition \ref{Nov06} in all other cases. Here $\pi_{s,t}:X_{u,s}^\sharp\times X_{s,t}^\sharp \times X_{t,v}^\sharp \to X_{s,t}^\sharp$ is the projection onto the space $ X_{s,t}^\sharp$. The system $
\left\{ \left(X_{s,t}^\sharp, \chi^\sharp _{(s,t),(u,v)}\right)\,|\, (s,t)\in I_{(0,\infty)}\right\}$
is also a projective system of compact Hausdorff spaces over the partially ordered set $(I_{(0,\infty)},\subseteq)$, and its projective limit 
$$X^{\diamond}=\limproj
\left\{ \left(X_{s,t}^\sharp, \chi^\sharp _{(s,t),(u,v)}\right)\,|\, (s,t)\in I_{(0,\infty)}\right\}$$  is a non-empty compact Hausdorff space that is homeomorphic to the space $X$. More precisely, as in Theorem \ref{March24}, there exists a unique homeomorphism $\psi:X^{\diamond}\to X$ so that the diagram
\begin{eqnarray*}\label{carerra}
\xymatrix{
X^\diamond  \ar[r]^-{\psi}\ar[d]_-{\chi_{s,t}^\diamond} & X\ar[d]^-{\chi^\times_{I}} \\
 X^\sharp_{s,t} \ar[r]_-{\chi^\sharp_I} & X_I
 }
\end{eqnarray*}
commutes, for all $I\in \cali{P}_{s,t}$ and $0<s<t$, where $\chi_I^\times$, $\chi^\sharp_I$ and $\chi^{\diamond}_{s,t}$ are the continuous surjections given by the projective limit construction.

Analogous to the construction of the *-monomorphisms  $\iota_{r,s,t}:\m{A}^\sharp_{r,t}\to C^*_{\{p_{s,t}\}}(\cali{A})\otimes  C^*_{\{p_{s,t}\}}(\cali{A})$ and $\iota_s:\m{A}^{\diamond}_{\{p_{s,t}\}}\to  C^*_{\{p_{s,t}\}}(\cali{A})\otimes C^*_{\{p_{s,t}\}}(\cali{A})$ in Theorem \ref{April2i}, one can construct two continuous surjections $\rho_{r,s,t}:X\times X\to X^\sharp_{r,t}$ and $\rho_s:X\times X\to X^\diamond$, uniquely determined by the conditions $\chi_{I\cup J}^\sharp\comp \rho_{r,s,t}=(\chi_I^\times) \times(\chi_J^\times)$, for 
all $I\in\cali{P}_{r,s}$, $J\in  \cali{P}_{s,t}$, and  $0<r<s<t$, respectively $\chi^{\diamond}_{r,t}\comp \rho_s=\rho_{r,s,t}$, for all  $0< r<s<t$. We then set \begin{eqnarray}\label{haiacasa}\chi_s=\psi\comp\rho_s:X\times X\to X,\;\;s>0,\end{eqnarray} and notice that $\chi^\times_{I\cup J}\comp\chi_s=(\chi^\times_I)\times (\chi^\times_J),$ for all $I\in\cali{P}_{r,s}$, $J\in\cali{P}_{s,t}$, and $0<r<s<t$, similar to (\ref{Nov10c}). By defining $\Delta_{\chi_s}(f)=f\comp \chi_s$, for all $f\in C(X)$ and $s>0$, we obtain that $\{\Delta_{\chi_s}\}_{s>0}$ is the one-parameter comultiplication on $C(X)\cong C^*_1(\cali{A}_{\op{com}})$, given by Theorem \ref{April2i}.
\end{example}


\section{Co-unital $C^*$-subproduct systems}\label{sec5.1}

Given a tensorial $C^*$- system  $\cali{A}=\left(\{\m{A}_{s,t}\}_{0<s<t}, \{\Delta_{r,s,t}\}_{0<r<s<t}\right)$, one can consider its conjugate system $\cali{A}^*=\left(\{\m{A}_{s,t}^*\}_{0<s<t}, \{\Delta_{r,s,t}^*\}_{0<r<s<t}\right)$, where $\m{A}_{s,t}^*$ is the conjugate space of the $C^*$-algebra $\m{A}_{s,t}$, for all $0<s<t$, and $\Delta_{r,s,t}^*:\m{A}_{r,s}^*\ba{\odot} \m{A}_{s,t}^*\to\m{A}_{r,t}^*$ is the restriction of the adjoint $\Delta_{r,s,t}^*$ of $\Delta_{r,s,t}$ to the completion $\m{A}_{r,s}^*\ba{\odot} \m{A}_{s,t}^*$ of the algebraic tensor product $\m{A}_{r,s}^*\odot \m{A}_{s,t}^*$ under the adjoint norm of the injective $C^*$-norm of $\m{A}_{r,s}\otimes \m{A}_{s,t}$ (see  \cite[Prop IV.4.10]{Tak}). The family of operators $\{\Delta_{r,s,t}^*\}_{0<r<s<t}$ thus defined satisfies the associativity law
 $\Delta_{r,s,u}^*\left(\op{id}_{\m{A}_{r,s}^*}\otimes \Delta_{s,t,u}^*\right)=
\Delta_{r,t,u}^*\left( \Delta_{r,s,t}^*\otimes \op{id}_{\m{A}_{t,u}^*}  \right)$, for all positive real numbers $0<r < s < t < u$, making the conjugate system $\cali{A}^*=\left(\{\m{A}_{s,t}^*\}_{0<s<t}, \{\Delta_{r,s,t}^*\}_{0<r<s<t}\right)$ into a ``two-parameter multiplicative tensorial system of Banach spaces''.

In this last section of our article, we focus primarily on families of bounded linear functionals that are invariant with respect to the multiplication of the conjugate system. Such families can be regarded as ``units'' of the conjugate system $\cali{A}^*$, or as ``co-units'' of $\cali{A}$.

\begin{definition}\label{labas}
Let $\cali{A}=\left(\{\m{A}_{s,t}\}_{0<s<t}, \{\Delta_{r,s,t}\}_{0<r<s<t}\right)$ be a tensorial $C^*$-system. A family $\{\varphi_{s,t}\}_{0<s<t}$ of bounded linear functionals $\varphi_{s,t}\in\m{A}_{s,t}^*$ is said to be co-multiplicative if they satisfy the co-multiplication law $$\varphi_{r,t}=\left(\varphi_{r,s}\otimes\varphi_{s,t} \right)\comp \Delta_{r,s,t},$$
 for all real numbers $0<r<s<t$. If, in addition, each $\varphi_{s,t}$ is a state of $\m{A}_{s,t}$, then the family $\{\varphi_{s,t}\}_{0<s<t}$ will be called a co-unit of $\cali{A}$. Any tensorial  $C^*$-system that admits a co-unit will be called co-unital. \end{definition}
 
The concept of co-multiplicative families of bounded linear functionals augments the notion of idempotent state of a compact quantum semigroup \cite{FS}, as indicated below. 
\begin{example}\label{zacus}An idempotent functional of a $C^*$-bialgebra $(\m{A}, \Delta)$ is a bounded linear functional $\varphi\in\m{A}^*$ that satisfies the co-multiplication law \begin{eqnarray}\label{hocapoca}
(\varphi\otimes \varphi)\comp \Delta=\varphi.\end{eqnarray} If $\varphi$ is as such, then the trivial family $\{\varphi_{s,t}\}_{0<s<t}$, where $\varphi_{s,t}=\varphi$, is a co-multiplicative family of bounded linear functionals of the trivial tensorial $C^*$-system $\cali{A}_{\op{triv}}$ associated with  $(\m{A}, \Delta)$, as introduced in Example \ref{quantgr}.
\end{example}
\begin{example}
Let $\cali{A}_{\op{fin}}=\left(\{\cali{B}(H_{s,t}\}_{0<s<t}, \{\op{ad}(U_{r,s,t})\}_{0<r<s<t}\right)$ be the $C^*$-subproduct system constructed from a Tsirelson subproduct system of finite dimensional Hilbert spaces $\cali{H}=\left(\{H_{s,t}\}_{0<s<t}, \{U_{r,s,t}\}_{0<r<s<t}\right)$ as in Example \ref{exa1}. For any two numbers $0<s<t$, let $\varphi_{s,t}=\op{tr}$, the trace on the matrix algebra  $\cali{B}(H_{s,t})$. Then the family $\{\varphi_{s,t}\}_{0<s<t}$ is co-multiplicative.
\end{example}
The co-multiplicative families of bounded functionals of a tensorial $C^*$-system of commutative $C^*$-algebras can be described directly in terms of their associated families of measures, as indicated in the following example.

 \begin{example} Let $\cali{A}_{\op{com}}=\left(\{C_0(X_{s,t})\}_{0<s<t}, \{\Delta_{r,s,t}\}_{0<r<s<t}\right)$ be the tensorial $C^*$-system constructed from a two-parameter multiplicative system of locally compact Hausdorff spaces  $\left(\{X_{s,t}\}_{0<s<t}, \{\chi_{r,s,t}\}_{0<r<s<t} \right)$, as in Example \ref{examp1}. The Riesz representation theorem establishes a direct correspondence between the class of co-multiplicative families $\{\varphi_{s,t}\}_{0<s<t}$ of bounded linear functionals of $\cali{A}_{\op{com}}$ and the class of families  $\{\mu_{s,t}\}_{0<s<t}$ of complex regular Borel measures $\mu_{s,t}$ on $X_{s,t}$ that satisfy the multiplication law \begin{eqnarray}\label{afinish}
\mu_{r,t}=(\chi_{r,s,t})_*(\mu_{r,s}\times \mu_{s,t}),\end{eqnarray}  for all real numbers $0<r<s<t$.

A particular case worth mentioning is that of a convolution semigroup of probability measures  $\{\mu_t\}_{t\geq 0}$ of $\mathbb{R}$ or, more generally, of an arbitrary locally compact group: any such convolution semigroup gives rise to the co-multiplicative family $\{\varphi_{s,t}\}_{0<s<t}$ of bounded linear functionals $\varphi_{s,t}(f)=\int f\,d\mu_{t-s}$, $f\in C_0(\mathbb{R})$, of the trivial $C^*$-subproduct system $\left(C_0(\mathbb{R}), \Delta\right)$, where $(\Delta f) (x,y)=f(x+y)$, for all $f\in C_0(\mathbb{R})$, and $x,\,y\in \mathbb{R}$.
\end{example}

In what follows, we will primarily focus on co-unital $C^*$-subproduct systems. To begin with, we show that the inductive dilation of a co-unital $C^*$-subproduct system is also co-unital.
\begin{proposition}\label{DK}
Let $\cali{A}=\left(\{\m{A}_{s,t}\}_{0<s<t}, \{\Delta_{r,s,t}\}_{0<r<s<t}\right)$ be a co-unital $C^*$-subproduct system and $\cali{A}^\sharp=\left(\{\m{A}_{s,t}^\sharp\}_{0<s<t},\,\{\Delta_{r,s,t}^\sharp\}_{0<r<s<t}\right)$ be its inductive dilation. Then for every co-unit $\{\varphi_{s,t}\}_{0<s<t}$ of $\cali{A}$, there exists a unique co-unit $\{\varphi_{s,t}^\sharp\}_{0<s<t}$ of  $\cali{A}^\sharp$, called the co-unit dilation of $\{\varphi_{s,t}\}_{0<s<t}$, such that
\begin{eqnarray}\label{Decaa}
\varphi^\sharp_{s,t}\comp \Delta^\sharp_{s,t}=\varphi_{s,t},
\end{eqnarray}
for all $0<s<t$, where $\{\Delta^\sharp_{s,t}\}_{0<s<t}$ is the inductive embedding of $\cali{A}$ into $\cali{A}^\sharp$.
\end{proposition}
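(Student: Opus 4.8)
The plan is to realize each functional $\varphi^\sharp_{s,t}$ as the inductive limit of a compatible family of tensor-product states on the constituent algebras $\m{A}_I$. Concretely, for a partition $I=\{s=\iota_0<\iota_1<\dots<\iota_m<\iota_{m+1}=t\}\in\cali{P}_{s,t}$, I set $\varphi_I=\varphi_{\iota_0,\iota_1}\otimes\varphi_{\iota_1,\iota_2}\otimes\cdots\otimes\varphi_{\iota_m,\iota_{m+1}}$, which is a state on $\m{A}_I$. To obtain a state on the inductive limit $\m{A}^\sharp_{s,t}$ through the universal property, I must verify the compatibility condition $\varphi_J\comp\Delta_{I,J}=\varphi_I$ for all $I\subseteq J$ in $\cali{P}_{s,t}$. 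Since the connecting maps $\Delta^\sharp_I$ are isometric, the resulting functional on the dense subalgebra $\bigcup_{I}\Delta^\sharp_I(\m{A}_I)$ is automatically contractive and extends to a state $\varphi^\sharp_{s,t}$ on $\m{A}^\sharp_{s,t}$, determined by $\varphi^\sharp_{s,t}\comp\Delta^\sharp_I=\varphi_I$.

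The crux is the compatibility $\varphi_J\comp\Delta_{I,J}=\varphi_I$. Writing the decomposition $J=I_0\cup\cdots\cup I_m$ of $J$ with respect to $I$ and using the tensor factorization $\Delta_{I,J}=\bigotimes_{i}\Delta_{\{\iota_i,\iota_{i+1}\},I_i}$ of Definition \ref{Jan21}(b), together with the matching factorization $\varphi_J=\bigotimes_i\varphi_{I_i}$, the identity reduces to the block-level statement $\varphi_{I_i}\comp\Delta_{\{\iota_i,\iota_{i+1}\},I_i}=\varphi_{\iota_i,\iota_{i+1}}$. Thus it suffices to prove $\varphi_I\comp\Delta_{\{s,t\},I}=\varphi_{s,t}$ for every $I\in\cali{P}_{s,t}$. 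I would establish this by induction on the number $m$ of interior points, following the recursive definition of $\Delta_{\{s,t\},I}$: the base case $m=1$ is exactly the co-multiplication law $(\varphi_{\iota_0,\iota_1}\otimes\varphi_{\iota_1,\iota_2})\comp\Delta_{\iota_0,\iota_1,\iota_2}=\varphi_{\iota_0,\iota_2}$ of Definition \ref{labas}, and in the inductive step I would apply the hypothesis to the reduced partition $I\setminus\{\iota_{m+1}\}$ to collapse the left tensor factor to $\varphi_{\iota_0,\iota_m}$, then invoke the co-multiplication law once more across $\Delta_{\iota_0,\iota_m,\iota_{m+1}}$.

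Having the states $\varphi^\sharp_{s,t}$, the normalization (\ref{Decaa}) is immediate, since $\Delta^\sharp_{s,t}=\Delta^\sharp_{\{s,t\}}$ and $\varphi_{\{s,t\}}=\varphi_{s,t}$. For the co-unit property $\varphi^\sharp_{r,t}=(\varphi^\sharp_{r,s}\otimes\varphi^\sharp_{s,t})\comp\Delta^\sharp_{r,s,t}$, I would test both sides on the dense family of elements $\Delta^\sharp_{I\cup J}(\m{A}_{I\cup J})$ with $I\in\cali{P}_{r,s}$, $J\in\cali{P}_{s,t}$. Using (\ref{compas}), the right-hand side evaluates to $(\varphi^\sharp_{r,s}\comp\Delta^\sharp_I)\otimes(\varphi^\sharp_{s,t}\comp\Delta^\sharp_J)=\varphi_I\otimes\varphi_J$, while the left-hand side gives $\varphi^\sharp_{r,t}\comp\Delta^\sharp_{I\cup J}=\varphi_{I\cup J}$; since $s$ is a breakpoint of $I\cup J$, one has $\varphi_{I\cup J}=\varphi_I\otimes\varphi_J$, so the two functionals agree on a dense set and hence everywhere.

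Uniqueness follows the same pattern. If $\{\psi_{s,t}\}$ is any co-unit of $\cali{A}^\sharp$ satisfying (\ref{Decaa}), I would show $\psi_{s,t}\comp\Delta^\sharp_I=\varphi_I$ for all $I$ by induction on $|I|$: the co-multiplicativity of $\psi$ lets me invert $\Delta^\sharp_{s,\iota,t}$ across a single interior point $\iota$ and, via (\ref{compas}), split $\psi_{s,t}\comp\Delta^\sharp_I$ into a tensor product of lower-order terms, which equal the corresponding $\varphi$-factors by the inductive hypothesis and (\ref{Decaa}); density of $\bigcup_I\Delta^\sharp_I(\m{A}_I)$ then forces $\psi_{s,t}=\varphi^\sharp_{s,t}$. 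I expect the main obstacle to be the bookkeeping in the inductive compatibility lemma $\varphi_I\comp\Delta_{\{s,t\},I}=\varphi_{s,t}$, where the nested tensor factorizations must be aligned correctly with the recursive definition of $\Delta_{\{s,t\},I}$; once that lemma is in place, the extension to the inductive limit and the verification of the co-unit law are routine density arguments.
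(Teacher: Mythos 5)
Your proposal is correct and follows essentially the same route as the paper's proof: the same product states $\varphi_I$, the same compatibility identity $\varphi_J\comp\Delta_{I,J}=\varphi_I$ reduced via the block decomposition to the trivial-partition case and settled by iterating the co-multiplication law (the paper telescopes using the expansion of Remark \ref{Nov14a}, while you induct on the recursive definition of $\Delta_{\{s,t\},I}$ — a cosmetic difference), followed by the inductive-limit construction, the density argument via (\ref{compas}) for co-multiplicativity, and the same uniqueness scheme the paper borrows from Proposition \ref{categorical theorem0}.
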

\begin{proof}
Let $0<s < t$ be two fixed real numbers. For any finite partition $I\in\cali{P}_{s,t}$,  $ I=\{s=\iota_0<\iota_1<\iota_2<\,\dots<\iota_m<\iota_{m+1}=t\}$, consider the product state $\varphi_I=\varphi_{\iota_0,\iota_1}\otimes \varphi_{\iota_1,\iota_2}\otimes \cdots \otimes \varphi_{\iota_m,\iota_{m+1}}$ of the $C^*$-algebra  $\m{A}_I$. We claim that the resulting system of product states $\{\varphi_I\}_{I\in\cali{P}_{s,t}}$ leaves the system of *-monomorphism $\{\Delta_{I,J}\}_{I\subset J\in\cali{P}_{s,t}}$ invariant, in the sense that 
\begin{eqnarray}\label{june14}\varphi_J\comp\Delta_{I,J}=\varphi_I, \end{eqnarray} 
for all $I,\, J\in\cali{P}_{s,t}$, $I\subseteq J$. Indeed, by writing $J=I_0\cup I_1\cup \cdots \cup I_m$ and $\Delta_{I,J}=\Delta_{\{\iota_0,\iota_1\},I_0}\otimes \cdots \otimes \Delta_{\{\iota_m,\iota_{m+1}\},I_m}$, we obtain that $$\varphi_J\comp \Delta_{I,J}=\varphi_{I_0}\comp \Delta_{\{\iota_0,\iota_1\},I_0}\otimes \cdots \otimes \varphi_{I_m}\comp \Delta_{\{\iota_m,\iota_{m+1}\},I_m}.$$ As a result, we can assume without losing generality that the partition $I$ is a trivial. Let then $I=\{s,t\}$ and $J=\{s=j_0<j_1<\iota_2<\,\dots<j_n<j_{n+1}=t\}$.  Expanding $\Delta_{I,J}$ as in Remark \ref{Nov14a}, we obtain that 
\begin{align*}\varphi_J\comp \Delta_{I,J}&=(\varphi_{j_0,j_1}\otimes \cdots \otimes \varphi_{j_n,j_{n+1}})\comp (\Delta_{j_0,j_1,j_2}\otimes \op{id}_{j_2,j_3}\otimes\cdots )\cdots \Delta_{j_0,j_n,j_{n+1}}
\\&=[(\varphi_{j_0,j_1}\otimes \varphi_{j_1,j_2})\comp \Delta_{j_0,j_1,j_2}\otimes \varphi_{J\setminus \{j_0,j_1\}}] \Delta_{\{j_0,j_{n+1}\},J\setminus \{j_1\}}
\\&=\left(\varphi_{j_0,j_2}\otimes \varphi_{J\setminus \{j_0,j_1\}}\right)\comp \Delta_{\{j_0,j_{n+1}\},J\setminus \{j_1\}}\\&=\varphi_{J\setminus \{j_1\}}\comp \Delta_{\{j_0,j_{n+1}\},J\setminus \{j_1\}}. 
\\&=\cdots\cdots\cdots\cdots\\
&=\varphi_{J\setminus \{j_1,\ldots j_{n-1}\}}\comp \Delta_{\{j_0,j_{n+1}\},J\setminus \{j_1,\ldots j_{n-1}\}}
\\&=(\varphi_{j_0,j_n}\otimes \varphi_{j_n,j_{n+1}})\comp \Delta_{j_0,j_n,j_{n+1}}=\varphi_{j_0,j_{n+1}}=\varphi_I,
\end{align*} 
as claimed. Consequently, one can consider the inductive limit $$\varphi^\sharp_{s,t}:=\limind _{I\in\cali{P}_{s,t}}\varphi_I$$ of the family $\{\varphi_I\}_{I\in\cali{P}_{s,t}}$, i.e.,
the unique state $\varphi^\sharp_{s,t}$ of the $C^*$-algebra $\m{A}^\sharp_{s,t}$ that satisfies the compatibility condition \begin{eqnarray}\label{hass}
\varphi_{s,t}^\sharp\comp \Delta_I^\sharp=\varphi_I,\end{eqnarray}
for all $I\in \cali{P}_{s,t}$. 

We claim that the resulting family $\{\varphi^\sharp_{s,t}\}_{0<s<t}$ of states of the $C^*$-product system $\cali{A}^\sharp$ is co-multiplicative. Indeed, for any positive real numbers $0<r<s<t$ and any finite partitions $I\in  \cali{P}_{r,s}$, $J\in \cali{P}_{s,t}$, one can use (\ref{compas}) to obtain that \begin{eqnarray*}
 \left(\varphi_{r,s}^\sharp\otimes\varphi_{s,t}^\sharp \right)\comp \Delta_{r,s,t}^\sharp\Delta_{I\cup J}^\sharp&=& \left(\varphi_{r,s}^\sharp\otimes\varphi_{s,t}^\sharp \right)\comp\left(\Delta_I^\sharp\otimes \Delta_J^\sharp\right)=\varphi_I\otimes \varphi_J=
 \varphi_{I\cup J}\\&=&\varphi_{r,t}^\sharp\comp\Delta_{I\cup J}^\sharp.\end{eqnarray*} Because the set $\bigcup _{(I,J)\in \m{P}_{r,s}\times \m{P}_{s,t}}\Delta^\sharp_{I\cup J}(\m{A}_{I\cup J})$ is everywhere dense in $\m{A}_{r,t}^\sharp$, we infer that the family $\{\varphi^\sharp_{s,t}\}_{0<s<t}$ is co-multiplicative. The uniqueness of $\{\varphi^\sharp_{s,t}\}_{0<s<t}$ in relation to (\ref{Decaa}) can be easily shown in a manner similar to that used in the proof of Proposition \ref{categorical theorem0}.
\end{proof}

We notice that if $\{\varphi_{s,t}\}_{0<s<t}$ and $\{\psi_{s,t}\}_{0<s<t}$ are two equivalent co-units of a $C^*$-subproduct system $\cali{A}$, in the sense that there is an automorphism $\{\theta_{s,t}\}_{0<s<t}$ of $\cali{A}$ so that $\varphi_{s,t}=\psi_{s,t}\comp\theta_{s,t}$, for all $0<s<t$, then their dilations $\{\varphi_{s,t}^\sharp\}_{0<s<t}$ and $\{\psi_{s,t}^\sharp\}_{0<s<t}$ are also equivalent. Moreover $\varphi_{s,t}^\sharp=\psi_{s,t}^\sharp\comp\theta_{s,t}^\sharp,$ for all $0<s<t$, where $\{\theta_{s,t}^\sharp\}_{0<s<t}$ is the isomorphism of $\cali{A}^\sharp$ induced by  $\{\theta_{s,t}\}_{0<s<t}$, as in Proposition \ref{categorical theorem0}.
\begin{example}\label{jun17}
Let $\left(\{X_{s,t}\}_{0<s<t}, \{\chi_{r,s,t}\}_{0<r<s<t} \right)$ be a two-parameter multiplicative system of compact Hausdorff spaces so that the functions $\chi_{r,s,t}$ are all surjective. Suppose that $\{\mu_{s,t}\}_{0<s<t}$ is a family of Borel probability measures $\mu_{s,t}$ on $X_{s,t}$, $0<s<t$, which satisfies the multiplication law (\ref{afinish}). For any two positive real numbers $0<s<t$ and any partition $I=\{s=\iota_0<\iota_1<\iota_2<\,\dots<\iota_m<\iota_{m+1}=t\}\in \cali{P}_{s,t}$, consider the product measure $\mu_I=
\mu_{\iota_0, \iota_1}\times \mu_{\iota_1,\iota_2}\times \dots\times \mu_{\iota_m,\iota_{m+1}}$ on $X_I$. If $J\in \cali{P}_{s,t}$ is a  refinement of $I$, then we deduce from (\ref{afinish}) that the continuous function $\chi_{I,J}$, defined by (\ref{June16}), is measure preserving, in the sense that $\mu_I=(\chi_{I,J})_*\mu_J$. Consequently, the system $
\{(X_I, \op{Bor}(X_I), \mu_I)\}_{I\in \cali{P}_{s,t} }
$ is a projective system of probability spaces over the partially ordered set $(\cali{P}_{s,t},  \subseteq)$, with connecting mappings $\chi_{I, J}$.

Next, we consider the set algebra $\m{M}_{s,t}=\bigcup_{I\in\cali{P}_{s,t}}(\chi_I^\sharp)^{-1}(\op{Bor}(X_I))$ on $X^\sharp_{s,t}$, and the finitely additive set function $m_{s,t}$ on $\m{M}_{s,t}$, acting as $$m_{s,t}\left((\chi_I^\sharp)^{-1}(B)\right)=\mu_I(B),$$ for every $I\in\cali{P}_{s,t}$ and every Borel subset $B$ of $X_I$. Because the spaces $X_I$ are all compact Hasudorff spaces and the Borel $\sigma$-algebra $\op{Bor}(X_{s,t}^\sharp)$ coincides with the $\sigma$-algebra generated by  $\m{M}_{s,t}$, we deduce, using a classical generalization of Kolmogorov's extension theorem (see e.g. \cite{Chol}), that the set function $m_{s,t}$ can be extended to a Borel probability measure $\mu_{s,t}^\sharp$ on $X_{s,t}^\sharp$, uniquely determined by the condition $(\chi_I^\sharp)_*\mu_{s,t}^\sharp=\mu_I$, for all $I\in\cali{P}_{s,t}$ and $0<s<t$.

It can be easily verified that the family of Borel probability measures  $\{\mu_{s,t}^\sharp\}_{0<s<t}$ thus obtained satisfies the multiplication law (\ref{afinish}) in relation to the family of homeomorphisms $\{\chi_{r,s,t}^\sharp\}_{0<r<s<t}$ introduced in Example \ref{examp11}. 
The co-multiplicative family of states of the $C^*$-product system $\cali{A}_{\op{com}}^\sharp=\left(\{C(X_{s,t}^\sharp)\}_{0<s<t},\,\{\Delta_{r,s,t}^\sharp\}_{0<r<s<t}\right)$ associated with this family of measures by means of the Riesz representation corresponds to the dilation of the co-unit of the $C^*$-subproduct system $\cali{A}_{\op{com}}=\left(\{C(X_{s,t})\}_{0<s<t}, \{\Delta_{r,s,t}\}_{0<r<s<t}\right)$ that is associated with the family of measures  $\{\mu_{s,t}\}_{0<s<t}$.
\end{example}
The notion of idempotent state of a $C^*$-algebra  $(\m{A}, \Delta)$, mentioned in Example \ref{zacus}, can be adapted to the setting of the $C^*$-algebra $C^*_{\{p_{s,t}\}}(\cali{A})$ of a unital $C^*$-subproduct system $\cali{A}=\left(\{\m{A}_{s,t}\}_{0<s<t}, \{\Delta_{r,s,t}\}_{0<r<s<t}\right)$ with unit $\{p_{s,t}\}_{0<s<t}$, the role of the co-multiplication $\Delta$ being played by the one-parameter co-multiplication $\{\Delta_s\}_{s>0}$, given by Theorem \ref{April2i}.
\begin{definition}
A state $\varphi$ of the $C^*$-algebra $C^*_{\{p_{s,t}\}}(\cali{A})$ is said to be an idempotent state if $(\varphi\otimes \varphi)\comp \Delta_s=\varphi,$ for all $s>0$; we also require that $p\notin \m{N}_{\varphi}$, the left kernel of $\varphi$, where $p=\Delta^\times_{s,t}(p_{p,s})$ for some $0<s<t$.
\end{definition}

The following two results establish a direct correspondence between the co-units of a unital $C^*$-subproduct system and the idempotent states of the $C^*$-algebra of that system. We notice that when applied to a unital trivial $C^*$-subproduct system (see Example \ref{quantgr}) with trivial unit, these results recovers Sch\"{u}rmann's reconstruction of a white noise from its 1-dimensional distribution \cite[pg. 38-40]{Sch}
\begin{proposition}\label{DK2} Let $\cali{A}=\left(\{\m{A}_{s,t}\}_{0<s<t}, \{\Delta_{r,s,t}\}_{0<r<s<t}\right)$ be a unital $C^*$-subproduct system with unit $\{p_{s,t}\}_{0<s<t}$. If the $C^*$-algebra $C^*_{\{p_{s,t}\}}(\cali{A})$ admits an  idempotent state $\varphi$, then $\cali{A}$ is co-unital and the family $\{\varphi_{s,t}\}_{0<s<t}$ of marginal states \begin{eqnarray}\label{margina}\varphi_{s,t}=\varphi\comp \Delta_{s,t}^\times,\;\;\;0<s<t, \end{eqnarray} is a co-unit of $\cali{A}$.
\end{proposition}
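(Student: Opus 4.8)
The plan is to verify directly that the family $\{\varphi_{s,t}\}_{0<s<t}$ given by (\ref{margina}) meets the two requirements of Definition \ref{labas}: that each $\varphi_{s,t}$ is a \emph{state} of $\m{A}_{s,t}$, and that the co-multiplication law $\varphi_{r,t}=(\varphi_{r,s}\otimes\varphi_{s,t})\comp\Delta_{r,s,t}$ holds. Positivity of each $\varphi_{s,t}=\varphi\comp\Delta^\times_{s,t}$ is automatic, since $\Delta^\times_{s,t}$ is a $*$-monomorphism and $\varphi$ is a state; the only delicate point is the normalization $\|\varphi_{s,t}\|=1$, and this is precisely where the hypothesis $p\notin\m{N}_\varphi$ enters. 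I expect this normalization to be the crux of the argument, while the co-multiplication law will follow formally from the structure of $\{\Delta_s\}_{s>0}$.

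First I would pin down the value of $\varphi$ on the distinguished projection $p=\Delta^\times_{s,t}(p_{s,t})$ of Observation \ref{avner}(a). Applying the idempotency relation $(\varphi\otimes\varphi)\comp\Delta_s=\varphi$ to $p$ and using $\Delta_s(p)=p\otimes p$ from Observation \ref{avner}(a), I get $\varphi(p)=(\varphi\otimes\varphi)(p\otimes p)=\varphi(p)^2$, so $\varphi(p)\in\{0,1\}$. Since $p$ is a projection we have $\varphi(p^*p)=\varphi(p)$, and the hypothesis $p\notin\m{N}_\varphi$ says $\varphi(p^*p)\neq 0$; hence $\varphi(p)=1$.

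Next I would conclude that each $\varphi_{s,t}$ is a state. It is a positive functional with $\|\varphi_{s,t}\|\le\|\varphi\|\,\|\Delta^\times_{s,t}\|\le 1$, because $*$-homomorphisms are contractive and $\varphi$ is a state. On the other hand $\varphi_{s,t}(p_{s,t})=\varphi\big(\Delta^\times_{s,t}(p_{s,t})\big)=\varphi(p)=1$ by the previous step, and since $p_{s,t}$ is a norm-one projection this forces $\|\varphi_{s,t}\|\ge 1$, whence $\|\varphi_{s,t}\|=1$. A positive linear functional of norm one is a state, so $\varphi_{s,t}$ is a state of $\m{A}_{s,t}$; in particular $\cali{A}$ is co-unital once the co-multiplication law is checked.

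Finally I would establish the co-multiplication law by routing $\Delta^\times_{r,t}$ through the partition $\{r,s,t\}$. Applying (\ref{Nov10c}) with $I=\{r,s\}\in\cali{P}_{r,s}$ and $J=\{s,t\}\in\cali{P}_{s,t}$ gives $\Delta_s\Delta^\times_{\{r,s,t\}}=\Delta^\times_{r,s}\otimes\Delta^\times_{s,t}$; composing on the left with $\varphi\otimes\varphi$ and using idempotency yields $\varphi\comp\Delta^\times_{\{r,s,t\}}=(\varphi\comp\Delta^\times_{r,s})\otimes(\varphi\comp\Delta^\times_{s,t})=\varphi_{r,s}\otimes\varphi_{s,t}$. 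Since $\{r,t\}\subseteq\{r,s,t\}$ are partitions with the same endpoints, the connecting map is $\Delta^\times_{\{r,t\},\{r,s,t\}}=\Delta_{r,s,t}$, and the inductive-limit compatibility of the maps $\Delta^\times_I$ gives $\Delta^\times_{\{r,s,t\}}\Delta_{r,s,t}=\Delta^\times_{r,t}$. Combining these, $\varphi_{r,t}=\varphi\comp\Delta^\times_{r,t}=\big(\varphi\comp\Delta^\times_{\{r,s,t\}}\big)\comp\Delta_{r,s,t}=(\varphi_{r,s}\otimes\varphi_{s,t})\comp\Delta_{r,s,t}$, which is the desired law. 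The genuine obstacle is the normalization step: without the explicit value $\varphi(p)=1$ the functionals $\varphi_{s,t}$ could be subnormalized or even zero, so the interplay between idempotency of $\varphi$ and the projection $p$ is the heart of the proof.
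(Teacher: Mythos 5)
Your proof is correct and follows essentially the same route as the paper: the co-multiplication law is obtained exactly as in the paper's proof, by applying (\ref{Nov10c}) to the trivial partitions $I=\{r,s\}$, $J=\{s,t\}$ together with the identifications $\Delta_{r,s,t}=\Delta^\times_{\{r,t\},\{r,s,t\}}$ and $\Delta^\times_{\{r,s,t\}}\Delta^\times_{\{r,t\},\{r,s,t\}}=\Delta^\times_{r,t}$. Your only departure is that you spell out why $\varphi_{s,t}(p_{s,t})=1$ (via $\Delta_s(p)=p\otimes p$, idempotency forcing $\varphi(p)=\varphi(p)^2$, and $p\notin\m{N}_\varphi$), a step the paper asserts without detail; this is a faithful filling-in rather than a different argument.
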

\begin{proof} 
We deduce immediately from (\ref{Nov10c}) that $(\varphi\comp \Delta_I^\times)\otimes (\varphi\comp \Delta_J^\times)=\varphi\comp \Delta_{I\cup J}^\times$ for all partitions $I\in \cali{P}_{r,s}$ and $J\in \cali{P}_{s,t}$, where $0<r<s<t$. By taking $I$ and $J$ to be trivial paritions, $I=\{r,s\}$, respectively $J=\{s,t\}$, and using the fact that $\Delta_{r,s,t}=\Delta _{\{r,t\},\{r,s,t\}}^\times$, we obtain that $\left(\varphi_{r,s}\otimes\varphi_{s,t} \right)\comp \Delta_{r,s,t}=
\varphi\comp\Delta^\times_{\{r,s,t\}}\Delta _{\{r,t\},\{r,s,t\}}^\times=\varphi_{r,t}
$, as needed. Moreover, because $\varphi_{s,t}(p_{s,t})=1$, we obtain that $\|\varphi_{s,t}\|=1$, for all $0<s<t.$   \end{proof}

\begin{theorem}\label{DK1} Suppose that $\cali{A}=\left(\{\m{A}_{s,t}\}_{0<s<t}, \{\Delta_{r,s,t}\}_{0<r<s<t}\right)$ is a unital co-unital $C^*$-subproduct system with unit $\{p_{s,t}\}_{0<s<t}$ and co-unit $\{\varphi_{s,t}\}_{0<s<t}$ satisfying $\varphi_{s,t}(p_{s,t})=1$, for all $0<s<t$. Then there exists a unique idempotent state $\varphi$ of the $C^*$-algebra  $C^*_{\{p_{s,t}\}}(\cali{A})$ such that $\{\varphi_{s,t}\}_{0<s<t}$ are the  marginal states of $\varphi$.
\end{theorem}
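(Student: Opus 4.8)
The plan is to build $\varphi$ as the inductive limit of the product states already used in Proposition \ref{DK}, now indexed over the full poset $\cali{P}$. For each $I=\{\iota_0<\iota_1<\dots<\iota_{m+1}\}\in\cali{P}$, set $\varphi_I=\varphi_{\iota_0,\iota_1}\otimes\varphi_{\iota_1,\iota_2}\otimes\cdots\otimes\varphi_{\iota_m,\iota_{m+1}}$, a state of $\m{A}_I$ since a tensor product of states is a state of the minimal tensor product. The first task is to verify the compatibility $\varphi_J\comp\Delta^\times_{I,J}=\varphi_I$ for all $I\subseteq J$ in $\cali{P}$; granting this, the universal property of the inductive limit produces a unique state $\varphi$ of $C^*_{\{p_{s,t}\}}(\cali{A})$ with $\varphi\comp\Delta^\times_I=\varphi_I$ for all $I\in\cali{P}$.

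For the compatibility I would split according to the definition of $\Delta^\times_{I,J}$. When $I,J\in\cali{P}_{s,t}$ share endpoints we have $\Delta^\times_{I,J}=\Delta_{I,J}$ and the identity is precisely (\ref{june14}). In the remaining case, decompose $J=\underline{I}\cup I^{\times}\cup\overline{I}$ as in (\ref{decoop}); since $\varphi_J$ is a product state it factors as $\varphi_{\underline{I}}\otimes\varphi_{I^{\times}}\otimes\varphi_{\overline{I}}$, so for $x\in\m{A}_I$
\[
\varphi_J\bigl(\Delta^\times_{I,J}(x)\bigr)=\varphi_{\underline{I}}(p_{\underline{I}})\cdot\bigl(\varphi_{I^{\times}}\comp\Delta_{I,I^{\times}}\bigr)(x)\cdot\varphi_{\overline{I}}(p_{\overline{I}}).
\]
This is where the hypothesis $\varphi_{s,t}(p_{s,t})=1$ is indispensable: because $p_{\underline{I}}$ and $p_{\overline{I}}$ are product projections and $\varphi_{\underline{I}},\varphi_{\overline{I}}$ the matching product states, both outer factors equal $1$, while the middle factor equals $\varphi_I(x)$ by (\ref{june14}) applied to $I\subseteq I^{\times}$ inside $\cali{P}_{s,t}$. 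I expect this to be the one genuinely new verification; the rest reuses machinery already in place.

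Once $\varphi$ exists, the marginal identity is immediate: $\varphi\comp\Delta^\times_{s,t}=\varphi_{\{s,t\}}=\varphi_{s,t}$, so the $\varphi_{s,t}$ are exactly the marginal states (\ref{margina}). For idempotence I would invoke (\ref{Nov10c}): for $I\in\cali{P}_{r,s}$ and $J\in\cali{P}_{s,t}$,
\[
(\varphi\otimes\varphi)\comp\Delta_s\comp\Delta^\times_{I\cup J}=(\varphi\comp\Delta^\times_I)\otimes(\varphi\comp\Delta^\times_J)=\varphi_I\otimes\varphi_J=\varphi_{I\cup J}=\varphi\comp\Delta^\times_{I\cup J}.
\]
Since the partitions $K$ having $s$ in their interior are cofinal in $\cali{P}$ (any partition can be refined so that $s$ becomes interior) and such $K$ split as $I\cup J$ with $I\in\cali{P}_{r,s}$, $J\in\cali{P}_{s,t}$, the set $\bigcup\Delta^\times_{I\cup J}(\m{A}_{I\cup J})$ is dense in $C^*_{\{p_{s,t}\}}(\cali{A})$, and $(\varphi\otimes\varphi)\comp\Delta_s=\varphi$ follows by continuity. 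The side condition $p\notin\m{N}_\varphi$ holds because $p=\Delta^\times_{s,t}(p_{s,t})$ is a projection with $\varphi(p^*p)=\varphi(p)=\varphi_{s,t}(p_{s,t})=1\neq 0$.

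For uniqueness, suppose $\psi$ is any idempotent state with the same marginals. The same use of (\ref{Nov10c}) together with $(\psi\otimes\psi)\comp\Delta_s=\psi$ gives the factorization $\psi\comp\Delta^\times_{I\cup J}=(\psi\comp\Delta^\times_I)\otimes(\psi\comp\Delta^\times_J)$. Splitting an arbitrary $I\in\cali{P}$ at an interior point and inducting on $|I|$, with base case $\psi\comp\Delta^\times_{s,t}=\varphi_{s,t}$ fixed by hypothesis, forces $\psi\comp\Delta^\times_I=\varphi_I$ for every $I$; density of $\bigcup_I\Delta^\times_I(\m{A}_I)$ then yields $\psi=\varphi$, completing the proof.
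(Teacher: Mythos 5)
Your proof is correct and follows essentially the same route as the paper's: the same inductive-limit construction of $\varphi$ from the product states $\varphi_I$ over $(\cali{P},\subseteq)$, the same two-case compatibility check $\varphi_J\comp\Delta^\times_{I,J}=\varphi_I$ (with the hypothesis $\varphi_{s,t}(p_{s,t})=1$ absorbing the outer tensor factors $p_{\underline{I}}$, $p_{\overline{I}}$), and the same use of (\ref{Nov10c}) plus density for idempotence. The only difference is that you spell out the uniqueness argument and the verification that $p\notin\m{N}_\varphi$, both of which the paper leaves as immediate; your versions of these are correct.
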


\begin{proof} The idempotent state $\varphi$ will be constructed as the inductive limit of the family of product states $\{\varphi_I\}_{I\in\cali{P}}$, defined as in the proof of Proposition \ref{DK} 
For this purpose, it is enough to show that \begin{eqnarray}\label{june15}\varphi_J\comp\Delta_{I,J}^\times=\varphi_I, \end{eqnarray} 
for all $I,\,J\in\cali{P}$, $I\subseteq J$. To check this compatibility condition, we note that if $I,\,J\in\cali{P}_{s,t}$, for some $0<s<t$, then (\ref{june15}) is exactly (\ref{june14}). Moreover, if $I$ and $J$ are arbitrary finite partitions, then by decomposing $J$ with respect to $I$ as $J=\underline{I}\cup I^{\times}\cup \overline{I}$, we obtain that $$\varphi_J\comp \Delta_{I,J}^\times(x)=(\varphi_{\underline{I}}\otimes \varphi_{I^{\times}}\otimes \varphi_{\overline{I}})({p}_{\underline{I}}\otimes \Delta_{I,I^{\times}}(x)\otimes {p}_{\overline{I}})=\varphi_{I^{\times}}\comp\Delta_{I,I^{\times}}(x)=\varphi_I(x),$$ for all $x\in\m{A}_I$. 

Let $\varphi=\limind_{I\in\cali{P}}\varphi_I$ be the inductive limit of the inductive family of states $\{\varphi_I\}_{I\in\cali{P}}$. Therefore $\varphi$ the unique state of the $C^*$-algebra $C^*(\cali{A})$ that satisfies the compatibility condition $\varphi\comp \Delta^\times_I=\varphi_I$,
 for every partition $I\in \cali{P}$. In particular, $\varphi$ satisfies (\ref{margina}).
Moreover, using (\ref{Nov10c}), we obtain that for any number $s$, we have $$(\varphi\otimes \varphi)\comp\Delta_s\Delta_{I\cup J}^\times=(\varphi\comp \Delta_I^\times)\otimes (\varphi\comp \Delta_J^\times)=\varphi_{I\cup J}=\varphi\comp \Delta_{I\cup J}^\times,$$ for all $I\in\cali{P}_{r,s}$, $J\in\cali{P}_{s,t}$, and $0<r<s<t$. It then follows that $\varphi$ is an idempotent state. Uniqueness is immediate.
\end{proof}

\begin{example}Let $\left(\{X_{s,t}\}_{0<s<t}, \{\chi_{r,s,t}\}_{0<r<s<t} \right)$ and $\{\mu_{s,t}\}_{0<s<t}$ be as in Example \ref{jun17}. Then for any two partitions $I,\,J\in \cali{P}$, $I\subseteq J$, we have $$\mu_I=(\chi_{I,J}^\times)_*\mu_J,$$ where $\chi_{I,J}^\times$ is as in (\ref{vineriseara}). Therefore the system $
\{(X_I, \op{Bor}(X_I), \mu_I)\}_{I\in \cali{P}}
$ is a projective system of probability spaces over the partially ordered set $(\cali{P},  \subseteq)$, with connecting mappings $\chi_{I, J}^\times$. Arguing as in Example \ref{jun17}, one can construct a Borel probability measure $\mu$ on $X$ (see Example \ref{June08}) that is uniquely determined by the condition $(\chi_I^\times)_*\mu=\mu_I$, for every $I\in\cali{P}$. Moreover, $\mu$ is an idempotent measure in relation to the functions $\chi_s$ defined in (\ref{haiacasa}), in the sense that $\mu={(\chi_s)}_*(\mu\times \mu)$, for all $s>0$. This can be shown by employing the technique used in Section \ref{sec5+.1}. 

The state $\varphi$ of $C(X)$ induced by the measure $\mu$ is therefore an idempotent state that corresponds to the idempotent state of $C^*_1(\cali{A}_{\op{com}})$, given by Theorem \ref{DK1}, constructed from the co-unit $\{\varphi_{s,t}\}_{0<s<t}$ associated with $\{\mu_{s,t}\}_{0<s<t}$, with respect to the trivial unit $p_{s,t}=1_{C(X_{s,t})}$, $0<s<t$.
\end{example}

Co-units of $C^*$-subproduct systems give rise to a Tsirelson subproduct systems of Hilbert spaces, as shown below.
\begin{proposition} \label{harici}Let $\cali{A}=\left(\{\m{A}_{s,t}\}_{0<s<t}, \{\Delta_{r,s,t}\}_{0<r<s<t}\right)$ be a co-unital $C^*$-subproduct system and $\{\varphi_{s,t}\}_{0<s<t}$ be a co-unit of $\cali{A}$. Let $(\pi_{s,t}, H_{s,t})$ be the GNS representation of $\m{A}_{s,t}$ associated with $\varphi_{s,t}$, for every $0<s<t$. Then the family $\{H_{s,t}\}_{0<s<t}$ can be assembled into a Tsirelson subproduct system of Hilbert spaces.
\end{proposition}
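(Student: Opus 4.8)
The plan is to build the connecting isometries $U_{r,s,t}\colon H_{r,t}\to H_{r,s}\otimes H_{s,t}$ directly out of the GNS data, exploiting the functoriality of the GNS construction under $*$-homomorphisms. Write $\Lambda_{s,t}\colon\m{A}_{s,t}\to H_{s,t}$ for the canonical linear map with dense range determined by $\langle\Lambda_{s,t}(x),\Lambda_{s,t}(y)\rangle=\varphi_{s,t}(y^*x)$, so that $H_{s,t}$ is the completion of $\m{A}_{s,t}/\m{N}_{\varphi_{s,t}}$. The first ingredient is the standard fact that the GNS representation of the product state $\varphi_{r,s}\otimes\varphi_{s,t}$ on the injective tensor product $\m{A}_{r,s}\otimes\m{A}_{s,t}$ is $\pi_{r,s}\otimes\pi_{s,t}$ acting on $H_{r,s}\otimes H_{s,t}$, with canonical map $\Lambda_{r,s}\otimes\Lambda_{s,t}$ on elementary tensors. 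The second ingredient is that whenever $\theta\colon\m{B}\to\m{C}$ is a $*$-homomorphism and $\psi$ is a state of $\m{C}$, the assignment $\Lambda_{\psi\comp\theta}(x)\mapsto\Lambda_\psi(\theta(x))$ extends to an isometry $V_\theta\colon H_{\psi\comp\theta}\to H_\psi$, and this assignment is functorial in $\theta$, i.e.\ $V_{\theta'\comp\theta}=V_{\theta'}V_\theta$ whenever the composite is defined.

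Since $\{\varphi_{s,t}\}_{0<s<t}$ is a co-unit, the co-multiplication law $\varphi_{r,t}=(\varphi_{r,s}\otimes\varphi_{s,t})\comp\Delta_{r,s,t}$ of Definition \ref{labas} lets me take $\theta=\Delta_{r,s,t}$ and $\psi=\varphi_{r,s}\otimes\varphi_{s,t}$, whence $\psi\comp\theta=\varphi_{r,t}$. Combined with the first ingredient, this produces a well-defined isometry
$$U_{r,s,t}\colon H_{r,t}\to H_{r,s}\otimes H_{s,t},\qquad U_{r,s,t}\Lambda_{r,t}(x)=(\Lambda_{r,s}\otimes\Lambda_{s,t})\big(\Delta_{r,s,t}(x)\big),$$
for all $x\in\m{A}_{r,t}$. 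That $U_{r,s,t}$ is isometric is immediate from $\langle U_{r,s,t}\Lambda_{r,t}(x),U_{r,s,t}\Lambda_{r,t}(y)\rangle=(\varphi_{r,s}\otimes\varphi_{s,t})(\Delta_{r,s,t}(y^*x))=\varphi_{r,t}(y^*x)$, using that $\Delta_{r,s,t}$ is a $*$-homomorphism together with co-multiplicativity; it is densely defined because $\Lambda_{r,t}(\m{A}_{r,t})$ is dense in $H_{r,t}$.

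It then remains to verify the co-associativity law (\ref{Jan24cc}). The key observation is that each of the two composites in (\ref{Jan24cc}) is itself a GNS isometry of the form $V_\theta$. On the one hand, a computation on the dense set of elementary tensors $\Lambda_{r,s}(a)\otimes\Lambda_{s,u}(y)$ identifies $\op{id}_{H_{r,s}}\otimes U_{s,t,u}$ with the GNS isometry $V_{\op{id}_{\m{A}_{r,s}}\otimes\Delta_{s,t,u}}$, the relevant states matching precisely because $(\varphi_{r,s}\otimes\varphi_{s,t}\otimes\varphi_{t,u})\comp(\op{id}_{\m{A}_{r,s}}\otimes\Delta_{s,t,u})=\varphi_{r,s}\otimes\varphi_{s,u}$ by co-multiplicativity. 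Hence, by functoriality, $(\op{id}_{H_{r,s}}\otimes U_{s,t,u})U_{r,s,u}=V_{(\op{id}_{\m{A}_{r,s}}\otimes\Delta_{s,t,u})\Delta_{r,s,u}}$. Symmetrically, $(U_{r,s,t}\otimes\op{id}_{H_{t,u}})U_{r,t,u}=V_{(\Delta_{r,s,t}\otimes\op{id}_{\m{A}_{t,u}})\Delta_{r,t,u}}$. Since the two $*$-homomorphisms appearing in the subscripts coincide by the co-associativity law (\ref{Nov10d}) of $\cali{A}$, the two isometries coincide, which is exactly (\ref{Jan24cc}).

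The main obstacle I anticipate is the bookkeeping around the tensor-product identifications: one must confirm both that the GNS space of a product state is the Hilbert-space tensor product of the factor GNS spaces (so that the target $H_{r,s}\otimes H_{s,t}$ is the correct GNS space rather than merely a subspace of it) and that $\op{id}_{H_{r,s}}\otimes U_{s,t,u}$ is genuinely the GNS isometry $V_{\op{id}_{\m{A}_{r,s}}\otimes\Delta_{s,t,u}}$, which requires checking agreement on elementary tensors and invoking the minimality of the injective tensor norm used throughout. Once these identifications are pinned down, the functoriality of $V$ reduces the co-associativity of the $U_{r,s,t}$ to that of the $\Delta_{r,s,t}$, and no further estimates are needed.
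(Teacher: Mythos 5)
Your proposal is correct and follows essentially the same route as the paper: both construct $V_{r,s,t}$ (your $U_{r,s,t}$) as the map induced on GNS cosets by $\Delta_{r,s,t}$, using the co-multiplication law of the states for the isometry property and the unitary identification of the GNS space of a product state with the tensor product of the GNS spaces. The only difference is cosmetic: where the paper declares co-associativity of the $V_{r,s,t}$ "clear", you derive it from the functoriality $V_{\theta'\comp\theta}=V_{\theta'}V_{\theta}$ of the GNS isometries, which is a clean way of supplying that omitted verification.
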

\begin{proof} For any two real numbers $0<s<t$, let $\m{N}_{\varphi_{s,t}}$ be the left kernel of $\varphi_{s,t}$ and $\eta_{\varphi_{s,t}}(x)$ be the coset $x+\m{N}_{\varphi_{s,t}}$ of a representative $x\in \m{A}_{s,t}$ in the quotient space $\m{A}_{s,t}/\m{N}_{\varphi_{s,t}}$. Then the Hilbert space $H_{s,t}$ in the GNS construction associated with the state $\varphi_{s,t}$
is the completion of $\m{A}_{s,t}/\m{N}_{\varphi_{s,t}}$ with respect to the inner product $\ip{\eta_{\varphi_{s,t}}(x)}{\eta_{\varphi_{s,t}}(y)}=\varphi_{s,t}(y^*x)$, for all $x,\,y\in \m{A}_{s,t}$.

Using the co-multiplicativity of the family $\{\varphi_{s,t}\}_{0<s<t}$, we deduce that $\Delta_{r,s,t}(\m{N}_{\varphi_{r,t}})\subset\m{N}_{\varphi_{r,s}\otimes \varphi_{s,t}},$ for all $0<r<s<t$. Consequently, by identifying the Hilbert space $H_{r,s}\otimes H_{s,t}$ with the Hilbert space in the GNS construction associated with the product state $\varphi_{\{r,s,t\}}=\varphi_{r,s}\otimes \varphi_{s,t}$ via the unitary operator $$\eta_{\varphi_{r,s}}(x)\otimes \eta_{\varphi_{s,t}}(y)\mapsto \eta_{\varphi_{\{r,s,t\}}}(x\otimes y),\;\;x\in \m{A}_{r,s},\;y\in \m{A}_{s,t},$$ one can consider the operator $V_{r,s,t}:H_{r,t}\to H_{r,s}\otimes H_{t,s}$ that acts as \begin{eqnarray}\label{noparty}V_{r,s,t}(\eta_{\varphi_{r,t}}(x))=\eta_{\varphi_{\{r,s,t\}}}\left(\Delta_{r,s,t}(x) \right),\end{eqnarray} for all $x\in  \m{A}_{r,t}$. We notice that $V_{r,s,t}$ is an isometry. Indeed, for any  $x,\,y\in  \m{A}_{r,t}$, we have 
 \begin{eqnarray*}
 \ip{V_{r,s,t}(\eta_{\varphi_{r,t}}(x))}{V_{r,s,t}(\eta_{\varphi_{r,t}}(y))}&=&\ip{\eta_{\varphi_{\{r,s,t\}}}\left(\Delta_{r,s,t}(x)\right)}{\eta_{\varphi_{\{r,s,t\}}}\left(\Delta_{r,s,t}(y)\right)}\\&=&\varphi_{\{r,s,t\}}\left(\Delta_{r,s,t}(y)^*\Delta_{r,s,t}(x) \right)=\varphi_{r,t}(y^*x)\\&=&\ip{\eta_{\varphi_{r,t}}(x)}{\eta_{\varphi_{r,t}}(y)}.\end{eqnarray*}
It is also clear that the family $\{V_{r,s,t}\}_{0<r<s<t}$ satisfies the co-associativity law (\ref{Jan24cc}). Therefore
\begin{eqnarray}\label{alohaa}\cali{H}_{\{\varphi_{s,t}\}}=\left(\{H_{s,t}\}_{0<s<t}, \{V_{r,s,t}\}_{0<r<s<t}\right)\end{eqnarray} is a Tsirelson subproduct system of Hilbert spaces.
\end{proof}

\begin{definition}The system (\ref{alohaa}) will be called the Tsirelson subproduct system of Hilbert spaces associated with the co-unit $\{\varphi_{s,t}\}_{0<s<t}$. 
\end{definition}
\begin{observation}
 Suppose that $\cali{A}=\left(\{\m{A}_{s,t}\}_{0<s<t}, \{\Delta_{r,s,t}\}_{0<r<s<t}\right)$ is a unital co-unital $C^*$-subproduct system with unit $\{p_{s,t}\}_{0<s<t}$ and co-counit $\{\varphi_{s,t}\}_{0<s<t}$. If $\varphi_{s,t}(p_{s,t})=1$, for all $0<s<t$, then $\{\eta_{\varphi_{s,t}}(p_{s,t})\}_{0<s<t}$ is a normalized unit of the Tsirelson subproduct system of Hilbert spaces associated with the co-unit $\{\varphi_{s,t}\}_{0<s<t}$.
\end{observation}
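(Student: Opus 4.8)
The plan is to verify directly that the family $\{\eta_{\varphi_{s,t}}(p_{s,t})\}_{0<s<t}$ satisfies the two defining properties of a normalized unit of the Tsirelson subproduct system $\cali{H}_{\{\varphi_{s,t}\}}=\left(\{H_{s,t}\}_{0<s<t}, \{V_{r,s,t}\}_{0<r<s<t}\right)$ built in Proposition \ref{harici}: first, that each $\eta_{\varphi_{s,t}}(p_{s,t})$ is a unit vector in $H_{s,t}$, and second, that the family is co-multiplicative in the sense that $V_{r,s,t}\left(\eta_{\varphi_{r,t}}(p_{r,t})\right)=\eta_{\varphi_{r,s}}(p_{r,s})\otimes \eta_{\varphi_{s,t}}(p_{s,t})$ for all $0<r<s<t$.

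First I would establish normalization. Since $p_{s,t}\in\m{A}_{s,t}$ is a projection, we have $p_{s,t}^*p_{s,t}=p_{s,t}$, so using the inner product on $H_{s,t}$ coming from the GNS construction one computes
\begin{eqnarray*}
\left\|\eta_{\varphi_{s,t}}(p_{s,t})\right\|^2=\ip{\eta_{\varphi_{s,t}}(p_{s,t})}{\eta_{\varphi_{s,t}}(p_{s,t})}=\varphi_{s,t}(p_{s,t}^*p_{s,t})=\varphi_{s,t}(p_{s,t})=1,
\end{eqnarray*}
the last equality being the hypothesis $\varphi_{s,t}(p_{s,t})=1$. In particular $p_{s,t}\notin\m{N}_{\varphi_{s,t}}$, so $\eta_{\varphi_{s,t}}(p_{s,t})$ is a nonzero unit vector for all $0<s<t$.

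Next I would verify the co-multiplicativity relation by unwinding the definitions. Fixing $0<r<s<t$ and applying the defining formula (\ref{noparty}) of $V_{r,s,t}$ to $\eta_{\varphi_{r,t}}(p_{r,t})$, together with the unit property $\Delta_{r,s,t}(p_{r,t})=p_{r,s}\otimes p_{s,t}$ of $\{p_{s,t}\}_{0<s<t}$, gives
\begin{eqnarray*}
V_{r,s,t}\left(\eta_{\varphi_{r,t}}(p_{r,t})\right)=\eta_{\varphi_{\{r,s,t\}}}\left(\Delta_{r,s,t}(p_{r,t})\right)=\eta_{\varphi_{\{r,s,t\}}}\left(p_{r,s}\otimes p_{s,t}\right).
\end{eqnarray*}
Under the unitary identification of $H_{r,s}\otimes H_{s,t}$ with the GNS space of the product state $\varphi_{\{r,s,t\}}=\varphi_{r,s}\otimes\varphi_{s,t}$ set up in Proposition \ref{harici}, which carries $\eta_{\varphi_{r,s}}(x)\otimes\eta_{\varphi_{s,t}}(y)$ to $\eta_{\varphi_{\{r,s,t\}}}(x\otimes y)$, the right-hand side is exactly $\eta_{\varphi_{r,s}}(p_{r,s})\otimes\eta_{\varphi_{s,t}}(p_{s,t})$, which is the required relation.

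The argument is a short computation once the relevant definitions are recalled, and there is no genuine obstacle: the only point that demands care is the correct use of the unitary identification of $H_{r,s}\otimes H_{s,t}$ with the GNS space of $\varphi_{\{r,s,t\}}$, but this identification has already been constructed in Proposition \ref{harici} and transports the product vector $\eta_{\varphi_{r,s}}(p_{r,s})\otimes\eta_{\varphi_{s,t}}(p_{s,t})$ precisely onto $\eta_{\varphi_{\{r,s,t\}}}(p_{r,s}\otimes p_{s,t})$, so that both properties follow immediately.
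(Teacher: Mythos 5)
Your proof is correct and coincides with the argument the paper intends: the statement is given as an Observation with no written proof, and the implicit justification is exactly your direct verification, namely that $\|\eta_{\varphi_{s,t}}(p_{s,t})\|^2=\varphi_{s,t}(p_{s,t}^*p_{s,t})=\varphi_{s,t}(p_{s,t})=1$ since $p_{s,t}$ is a projection, and that the unit relation $\Delta_{r,s,t}(p_{r,t})=p_{r,s}\otimes p_{s,t}$ together with the defining formula for $V_{r,s,t}$ and the GNS identification of $H_{r,s}\otimes H_{s,t}$ with the GNS space of $\varphi_{r,s}\otimes\varphi_{s,t}$ yields $V_{r,s,t}\left(\eta_{\varphi_{r,t}}(p_{r,t})\right)=\eta_{\varphi_{r,s}}(p_{r,s})\otimes\eta_{\varphi_{s,t}}(p_{s,t})$. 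Nothing is missing; your handling of the unitary identification is the only delicate point and you treat it correctly.
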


The Tsirelson subproduct system of Hilbert spaces associated with a co-unit of a $C^*$-subproduct system of commutative $C^*$-algebras can be easily identified, as shown below.
\begin{example}
Let $\cali{A}_{\op{com}}=\left(\{C_0(X_{s,t})\}_{0<s<t}, \{\Delta_{r,s,t}\}_{0<r<s<t}\right)$ be the $C^*$-subproduct system constructed from a two-parameter multiplicative system of locally compact Hausdorff spaces $\left(\{X_{s,t}\}_{0<s<t}, \{\chi_{r,s,t}\}_{0<r<s<t} \right)$, where the  functions $\chi_{r,s,t}$ are assumed to be surjective. If $\{\varphi_{s,t}\}_{0<s<t}$ is a co-unit of $\cali{A}_{\op{com}}$, and $\{\mu_{s,t}\}_{0<s<t}$ is the associated family of Borel probability measures $\mu_{s,t}$ on $X_{s,t}$, then $$\cali{H}_{\{\varphi_{s,t}\}}=\left(\{L^2(X_{s,t}, \mu_{s,t})\}_{0<s<t}, \{V_{r,s,t}\}_{0<r<s<t}\right),$$ where 
$V_{r,s,t}f= f\comp\chi_{r,s,t},$
for all $f\in C_0(X_{r,t})$ and $0<r<s<t$.
\end{example}
\begin{remark}\label{monster} Similar to the Bhat-Mukherjee dilation of an Arveson subproduct system of Hilbert spaces \cite{Bhat-M}, any Tsirelson subproduct system of Hilbert spaces $\cali{H}=\left(\{H_{s,t}\}_{0<s<t}, \{V_{r,s,t}\}_{0<r<s<t}\right)$ can be dilated to a Tsirelson product system of Hilbert spaces $\cali{H}^\sharp=\left(\{H_{s,t}^\sharp\}_{0<s<t}, \{V_{r,s,t}^\sharp\}_{0<r<s<t}\right)$. The procedure for constructing $\cali{H}^\sharp$ is similar to that used in Section \ref{ch3.1} to construct the inductive dilation of a $C^*$-subproduct system.
We briefly describe the most important details of this construction here, leaving their completion to the discretion of the reader.

Let $\cali{H}=\left(\{H_{s,t}\}_{0<s<t}, \{V_{r,s,t}\}_{0<r<s<t}\right)$ be a Tsirelson subproduct system of Hilbert spaces.  For any two positive real numbers  $0<s < t$ and any partition $I\in \cali{P}_{s,t} $, $I=\{s=\iota_0<\iota_1<\iota_2<\,\dots<\iota_m<\iota_{m+1}=t\},$ consider the Hilbert space \begin{eqnarray}\label{Jan25}H_I=
H_{\iota_0, \iota_1}\otimes H_{\iota_1,\iota_2}\otimes \dots\otimes H_{\iota_m,\iota_{m+1}}.\end{eqnarray} and the isometric operator $V_{\{s,t\},I}:H_{s,t}\to H_I$, which is defined analogous to the *-monomorphism $\Delta_{\{s,t\},I}$ of Definition \ref{Jan21}, i.e.,
 \begin{eqnarray}V_{\{s,t\},I}=\left\{\begin{array}{llll}V_{\iota_0,\iota_1,\iota_2}, &\;m=1\\\left(V_{\{\iota_0,\iota_m\},I\setminus\{\iota_{m+1}\}}\otimes 1_{\iota_m,\iota_{m+1}}\right)V_{\iota_0,\iota_m,\iota_{m+1}},
&\; m\geq 2
\end{array}\right.\end{eqnarray} where $1_{\iota_m,\iota_{m+1}}$
is the identity operator on $H_{\iota_m,\iota_{m+1}}$. Moreover, if $J\in \cali{P}_{s,t}$ is an arbitrary refinement of $I$, decomposed as $J=I_0\cup \ldots \cup I_m$,  we also consider the isometry $V_{I, J}:H_I\to H_J$, \begin{eqnarray}\label{Jan25i}V_{I,J}=V_{\{\iota_0,\iota_1\}, I_0}\otimes V_{\{\iota_1,\iota_2\}, I_1}\otimes \cdots V_{\{\iota_m,\iota_{m+1}\}, I_m}.\end{eqnarray}
As in Proposition \ref{lemma inductive limit-c}, the system $\Bigl\{(H_I,V_{I,J})\,|\, I,\,J \in \cali{P}_{s,t},\; I\subseteq J \Bigr\}
$ is an inductive system of Hilbert spaces, for all $0<s<t$, and let $$H_{s,t}^\sharp=\limind\, \Bigl\{(H_I, V_{I,J})\,|\, I,\,J \in \cali{P}_{s,t},\; I\subseteq J\Bigr\}$$ be its inductive limit with associated connecting isometries $V_I^\sharp :H_I\to H^\sharp_{s,t}$, $I\in \cali{P}_{s,t}$. Similar to Theorem \ref{star-isomorphism theorem}, there exists a unitary operator $V_{r,s,t}^\sharp:H_{r,t}^\sharp\rightarrow H_{r,s}^\sharp\otimes H_{s,t}^\sharp$, uniquely determined by the condition  \begin{eqnarray}\label{holab}V_{r,s,t}^\sharp V_{I\cup J}^\sharp=V_I^\sharp\otimes V_J^\sharp,\end{eqnarray}
for all $I\in\cali{P}_{r,s}$, $J\in  \cali{P}_{s,t}$. The resulting family $\{V_{r,s,t}^\sharp\}_{0<r<s<t}$ satisfies the co-associativity law (\ref{Jan24cc}), thus making the system 
$$\cali{H}^\sharp=\left(\{H_{s,t}^\sharp\}_{0<s<t},\,\{V_{r,s,t}^\sharp\}_{0<r<s<t}\right)$$ a Tsirelson product system of Hilbert spaces. This system will be called the Bhat-Mukherjee dilation of the Tsirelson subproduct system $\cali{H}$.
\end{remark}

\begin{theorem} \label{harici1}
Let $\cali{A}=\left(\{\m{A}_{s,t}\}_{0<s<t}, \{\Delta_{r,s,t}\}_{0<r<s<t}\right)$ be a co-unital $C^*$-subproduct system and $\{\varphi_{s,t}\}_{0<s<t}$ be a co-unit of $\cali{A}$. 
Consider the Tsirelson subproduct system of Hilbert spaces $\cali{H}_{\{\varphi_{s,t}\}}=\left(\{H_{s,t}\}_{0<s<t}, \{V_{r,s,t}\}_{0<r<s<t}\right)$ associated with $\{\varphi_{s,t}\}_{0<s<t}$, and let $\cali{H}_{\{\varphi_{s,t}\}}^\sharp$ be its Bhat-Mukherjee dilation. If $\cali{H}_{\{\varphi_{s,t}^\sharp\}}=\left(\{H_{s,t}'\}_{0<s<t}, \{V_{r,s,t}'\}_{0<r<s<t}\right)$ is the Tsirelson product system of Hilbert spaces associated with the co-unit dilation $\{\varphi_{s,t}^\sharp\}_{0<s<t}$ of $\{\varphi_{s,t}\}_{0<s<t}$, then $\cali{H}_{\{\varphi_{s,t}^\sharp\}}$ and  $\cali{H}_{\{\varphi_{s,t}\}}^\sharp$ are isomorphic Tsirelson product systems of Hilbert spaces. 
\end{theorem}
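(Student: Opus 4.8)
The plan is to construct for each $0<s<t$ a unitary $W_{s,t}\colon H_{s,t}^\sharp\to H_{s,t}'$ and to verify that the family $\{W_{s,t}\}_{0<s<t}$ is an isomorphism of Tsirelson product systems. Throughout I use the standard fact that the GNS space of a product state $\varphi_I=\varphi_{\iota_0,\iota_1}\otimes\cdots\otimes\varphi_{\iota_m,\iota_{m+1}}$ of $\m{A}_I$ is canonically the tensor product $H_{\iota_0,\iota_1}\otimes\cdots\otimes H_{\iota_m,\iota_{m+1}}=H_I$ of the GNS spaces, via $\eta_{\varphi_I}(x_0\otimes\cdots\otimes x_m)=\eta_{\varphi_{\iota_0,\iota_1}}(x_0)\otimes\cdots\otimes\eta_{\varphi_{\iota_m,\iota_{m+1}}}(x_m)$; thus each $H_I$ of the Bhat--Mukherjee construction of Remark \ref{monster} is identified with the GNS space of $\varphi_I$. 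The first substantive step is to show that the connecting isometries $V_{I,J}$ are precisely the GNS intertwiners of the $\Delta_{I,J}$, namely
\[
V_{I,J}\,\eta_{\varphi_I}(x)=\eta_{\varphi_J}\bigl(\Delta_{I,J}(x)\bigr),\qquad x\in\m{A}_I,\ I\subseteq J .
\]
By the tensorial formula (\ref{Jan25i}) for $V_{I,J}$ and the corresponding one for $\Delta_{I,J}$ in Definition \ref{Jan21}, this reduces to the base case $V_{r,s,t}\,\eta_{\varphi_{r,t}}(x)=\eta_{\varphi_{\{r,s,t\}}}(\Delta_{r,s,t}(x))$, which is exactly (\ref{noparty}); the general identity then follows by an induction on $|I|$ that mirrors the iterative definition of $V_{\{s,t\},I}$ and uses the invariance relation (\ref{june14}) at each step.

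With this identification available, I would build $W_{s,t}$ from the universal property of the inductive limit $H_{s,t}^\sharp=\limind\{(H_I,V_{I,J})\,|\,I\subseteq J\in\cali{P}_{s,t}\}$. For each $I\in\cali{P}_{s,t}$ set $W_I\,\eta_{\varphi_I}(x)=\eta_{\varphi_{s,t}^\sharp}(\Delta_I^\sharp(x))$, $x\in\m{A}_I$; this map is isometric since (\ref{hass}) gives $\varphi_{s,t}^\sharp\comp\Delta_I^\sharp=\varphi_I$, whence $\|\eta_{\varphi_{s,t}^\sharp}(\Delta_I^\sharp(x))\|^2=\varphi_{s,t}^\sharp(\Delta_I^\sharp(x^*x))=\varphi_I(x^*x)=\|\eta_{\varphi_I}(x)\|^2$. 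The family $\{W_I\}_{I\in\cali{P}_{s,t}}$ is compatible with the $V_{I,J}$: combining the previous step with the relation $\Delta_J^\sharp\Delta_{I,J}=\Delta_I^\sharp$ gives $W_J V_{I,J}\,\eta_{\varphi_I}(x)=\eta_{\varphi_{s,t}^\sharp}(\Delta_J^\sharp\Delta_{I,J}(x))=\eta_{\varphi_{s,t}^\sharp}(\Delta_I^\sharp(x))=W_I\,\eta_{\varphi_I}(x)$. Hence there is a unique isometry $W_{s,t}\colon H_{s,t}^\sharp\to H_{s,t}'$ with $W_{s,t}V_I^\sharp=W_I$ for all $I$. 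Its range contains every vector $\eta_{\varphi_{s,t}^\sharp}(\Delta_I^\sharp(x))$; since $\bigcup_I\Delta_I^\sharp(\m{A}_I)$ is norm-dense in $\m{A}_{s,t}^\sharp$ and $\eta_{\varphi_{s,t}^\sharp}$ has dense range, $W_{s,t}$ has dense range, so being isometric it is unitary.

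It remains to check that $\{W_{s,t}\}$ intertwines the two product structures, i.e.\ $(W_{r,s}\otimes W_{s,t})V_{r,s,t}^\sharp=V_{r,s,t}'\,W_{r,t}$ for all $0<r<s<t$. Because $\cali{P}_{r,s}\lor\cali{P}_{s,t}$ is cofinal in $\cali{P}_{r,t}$, it suffices to test the identity on vectors $V_{I\cup J}^\sharp\bigl(\eta_{\varphi_I}(x)\otimes\eta_{\varphi_J}(y)\bigr)$ with $I\in\cali{P}_{r,s}$, $J\in\cali{P}_{s,t}$. For the left-hand side, (\ref{holab}) gives $V_{r,s,t}^\sharp V_{I\cup J}^\sharp=V_I^\sharp\otimes V_J^\sharp$, so applying $W_{r,s}\otimes W_{s,t}$ yields $\eta_{\varphi_{r,s}^\sharp}(\Delta_I^\sharp(x))\otimes\eta_{\varphi_{s,t}^\sharp}(\Delta_J^\sharp(y))$. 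For the right-hand side, $W_{r,t}V_{I\cup J}^\sharp$ maps the test vector to $\eta_{\varphi_{r,t}^\sharp}(\Delta_{I\cup J}^\sharp(x\otimes y))$, and applying the defining relation (\ref{noparty}) for $V_{r,s,t}'$ (written for the co-unit dilation $\{\varphi_{s,t}^\sharp\}$) followed by the compatibility (\ref{compas}), $\Delta_{r,s,t}^\sharp\Delta_{I\cup J}^\sharp=\Delta_I^\sharp\otimes\Delta_J^\sharp$, together with $\varphi_{\{r,s,t\}}^\sharp=\varphi_{r,s}^\sharp\otimes\varphi_{s,t}^\sharp$, produces the same vector. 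Continuity then extends the identity to all of $H_{r,t}^\sharp$, so $\{W_{s,t}\}_{0<s<t}$ is an isomorphism of Tsirelson product systems.

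I expect the main obstacle to be the first step: carrying the GNS tensor-product identification through the iterative construction of the Bhat--Mukherjee isometries in order to establish $V_{I,J}\,\eta_{\varphi_I}(x)=\eta_{\varphi_J}(\Delta_{I,J}(x))$. Once this compatibility is secured, the construction of each $W_{s,t}$ and the verification of the intertwining property are routine applications of the universal property of the inductive limit and of the already-established compatibility relations (\ref{compas}), (\ref{hass}) and (\ref{holab}).
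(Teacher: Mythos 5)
Your proposal is correct and follows essentially the same route as the paper's proof: the paper also builds, for each partition $I\in\cali{P}_{s,t}$, the isometry $\eta_{\varphi_I}(x)\mapsto\eta_{\varphi_{s,t}^\sharp}(\Delta_I^\sharp(x))$ (your $W_I$), passes to a unitary $H_{s,t}^\sharp\to H'_{s,t}$ via the universal property of the inductive limit and the density of $\bigcup_I\Delta_I^\sharp(\m{A}_I)$, and verifies the intertwining relation on the vectors $V_{I\cup J}^\sharp(\eta_{\varphi_I}(x)\otimes\eta_{\varphi_J}(y))$ using (\ref{holab}), (\ref{compas}) and (\ref{noparty}). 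The only difference is presentational: you isolate and prove by induction the GNS-intertwining identity $V_{I,J}\,\eta_{\varphi_I}(x)=\eta_{\varphi_J}(\Delta_{I,J}(x))$, which the paper uses without explicit justification when checking $V_JV_{I,J}=V_I$.
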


\begin{proof}
Let $0<s<t$ be two fixed real numbers. For any partition $I\in\cali{P}_{s,t}$ of the form $I=\{s=\iota_0<\iota_1<\iota_2<\,\dots<\iota_m<\iota_{m+1}=t\},$ we identify, as in the proof of Proposition \ref{harici}, the Hilbert space $H_I$, defined in (\ref{Jan25}), with the Hilbert space
in the GNS construction of the product state $\varphi_I=\varphi_{\iota_0,\iota_1}\otimes \varphi_{\iota_1,\iota_2}\otimes \cdots \otimes \varphi_{\iota_m,\iota_{m+1}}$ of the $C^*$-algebra $\m{A}_I$. Keeping the notation used in the proof of Proposition \ref{harici}, we deduce from (\ref{hass}) that $\Delta_I^\sharp\left(\m{N}_{\varphi_I}\right)\subseteq \m{N}_{\varphi_{s,t}^\sharp}$. Consequently, one can define the operator $V_I:H_I\to H'_{s,t}$ by $$
V_I(\eta _{\varphi_I}(x))=\eta_{\varphi_{s,t}^\sharp}\left(\Delta_I^\sharp(x)\right),$$ for every $x\in \m{A}_I$. Using (\ref{hass}) and reasoning as in the proof of Proposition \ref{harici} again, we deduce that $V_I$ is an isometry. We also note that $V_JV_{I,J}=V_I$, for all $I,\, J\in \cali{P}_{s,t}$, $I\subseteq J$, where $V_{I,J}$ is the connecting isometric operator defined in (\ref{Jan25i}). It is enough to check this identity when $I=\{s,t\}$. If this is the case, then \begin{eqnarray*}
V_JV_{\{s,t\},J}(\eta_{\varphi_{s,t}}(x))&=&V_J(\eta_{\varphi_J}(\Delta_{\{s,t\}, J}(x)))=\eta_{\varphi_{s,t}^\sharp}(\Delta_J^\sharp\Delta_{\{s,t\}, J}(x))\\&=&\eta_{\varphi_{s,t}^\sharp}( \Delta^\sharp_{\{s,t\}}(x))=V_{\{s,t\}}(\eta_{\varphi_{s,t}}(x)),
\end{eqnarray*} for all $x\in\m{A}_{s,t}$, as required.

Additionally, we notice that the set $\bigcup _{I\in \m{P}_{s,t}}V_{I}(H_{I})$ is everywhere dense in $H'_{s,t}$ because the set $\bigcup _{I\in \m{P}_{s,t}}\Delta^\sharp_{I}(\m{A}_{I})$ is everywhere dense in $\m{A}_{s,t}^\sharp$. Consequently, there exists a unique unitary operator $Z_{s,t}:H^\sharp_{s,t}\to H'_{s,t}$ such that $Z_{s,t}V_I^\sharp=V_I$, for all $I\in\cali{P}_{s,t}$. 

We claim that the resulting family of unitary operators $\{Z_{s,t}\}_{0<s<t}$ is an isomorphism of Tsirelson product systems of Hilbert spaces, i.e., it satisfies $(Z_{r,s}\otimes Z_{s,t})V_{r,s,t}^\sharp=V'_{r,s,t}Z_{r,t}$ for all $0<r<s<t.$ Indeed, for any two partitions $I\in\cali{P}_{r,s}$, $J\in  \cali{P}_{s,t}$, we have
\begin{eqnarray*}
(Z_{r,s}\otimes Z_{s,t})V_{r,s,t}^\sharp V_{I\cup J}^\sharp&\stackrel{(\ref{holab})}{=}&Z_{r,s} V_{I}^\sharp\otimes Z_{s,t} V_{ J}^\sharp =V_I\otimes V_J\stackrel{(\ref{compas})}{=}V'_{r,s,t}V_{I\cup J}\\&=&V'_{r,s,t}Z_{r,t}V_{I\cup J}^\sharp,
\end{eqnarray*}
and the conclusion follows. The theorem is proved. 
\end{proof}
\begin{example}
Let $\left(\{X_{s,t}\}_{0<s<t}, \{\chi_{r,s,t}\}_{0<r<s<t} \right)$ and $\{\mu_{s,t}\}_{0<s<t}$ be as in Example \ref{jun17}, and $\{\varphi_{s,t}\}_{0<s<t}$ be the co-unit of the $C^*$-subproduct system  $\cali{A}_{\op{com}}=\left(\{C(X_{s,t})\}_{0<s<t}, \{\Delta_{r,s,t}\}_{0<r<s<t}\right)$ associated with $\{\mu_{s,t}\}_{0<s<t}$. Consider the family of Borel probability measures $\{\mu_{s,t}^\sharp\}_{0<s<t}$, constructed in Example \ref{jun17}, and the associated co-unit $\{\varphi_{s,t}^\sharp\}_{0<s<t}$ of the $C^*$-product system $\cali{A}_{\op{com}}^\sharp=(\{C(X_{s,t}^\sharp)\}_{0<s<t},\,\{\Delta_{r,s,t}^\sharp\}_{0<r<s<t})$. Then the Bhat-Mukherjee dilation of $\cali{H}_{\{\varphi_{s,t}\}}=\left(\{L^2(X_{s,t}, \mu_{s,t})\}_{0<s<t}, \{V_{r,s,t}\}_{0<r<s<t}\right)$ is isomorphic to the Tsirelson product system of Hilbert spaces $$\cali{H}_{\{\varphi_{s,t}^\sharp\}}=(\{L^2(X_{s,t}^\sharp, \mu_{s,t}^\sharp)\}_{0<s<t}, \{V_{r,s,t}'\}_{0<r<s<t}).$$
\end{example}

\begin{bibdiv}
\begin{biblist}

\bib{Arveson89}{article}{
   author={Arveson, William},
  title={Continuous analogues of Fock space},
  journal={Mem. Amer. Math. Soc.},
   volume={80},
   date={1989},
   number={409},
   pages={iv+66},
}

\bib{Arveson97}{article}{
   author={Arveson, William},
  title={The index of a quantum dynamical semigroup},
   journal={J. Funct. Anal.},
  volume={146},
  date={1997},
   number={2},
   pages={557--588},
}

\bib{Arveson-book}{book}{
   author={Arveson, William},
   title={Noncommutative dynamics and $E$-semigroups},
   series={Springer Monographs in Mathematics},
   publisher={Springer-Verlag, New York},
   date={2003},
   pages={x+434},
   isbn={0-387-00151-4},
}

\bib{Bhat}{article}{
   author={Bhat, B. V. Rajarama},
   title={An index theory for quantum dynamical semigroups},
   journal={Trans. Amer. Math. Soc.},
   volume={348},
   date={1996},
   number={2},
   pages={561--583},
}

\bib{Bhat2}{article}{
   author={Bhat, B. V. Rajarama},
   title={Minimal dilations of quantum dynamical semigroups to semigroups of
   endomorphisms of $C^\ast$-algebras},
   journal={J. Ramanujan Math. Soc.},
   volume={14},
   date={1999},
   number={2},
   pages={109--124},
}
\bib{Bhat-M}{article}{
   author={Bhat, B. V. Rajarama},
   author={Mukherjee, Mithun},
   title={Inclusion systems and amalgamated products of product systems},
   journal={Infin. Dimens. Anal. Quantum Probab. Relat. Top.},
   volume={13},
   date={2010},
   number={1},
   pages={1--26},
}

\bib{Bhat-Skeide}{article}{
   author={Bhat, B. V. Rajarama},
  author={Skeide, Michael},
  title={Tensor product systems of Hilbert modules and dilations of
   completely positive semigroups},
  journal={Infin. Dimens. Anal. Quantum Probab. Relat. Top.},
  volume={3},
   date={2000},
   number={4},
   pages={519--575},
   issn={0219-0257},
   review={\MR{1805844}},
}

\bib{BR}{book}{
   author={Bratteli, Ola},
   author={Robinson, Derek W.},
   title={Operator algebras and quantum statistical mechanics. Vol. 1},
   series={Texts and Monographs in Physics},
   note={$C^{\ast} $- and $W^{\ast} $-algebras, algebras, symmetry
   groups, decomposition of states},
   publisher={Springer-Verlag, New York-Heidelberg},
   date={1979},
   pages={xii+500},
}
	
\bib{Chol}{article}{
   author={Choksi, J. R.},
   title={Inverse limits of measure spaces},
   journal={Proc. London Math. Soc. (3)},
   volume={8},
   date={1958},
   pages={321--342},
}

\bib{FS}{article}{
   author={Franz, Uwe},
   author={Skalski, Adam},
   title={On idempotent states on quantum groups},
   journal={J. Algebra},
   volume={322},
   date={2009},
   number={5},
   pages={1774--1802},
}

\bib{GS}{article}{
   author={Gerhold, Malte},
   author={Skeide, Michael},
   title={Subproduct systems and Cartesian systems; new results on factorial
   languages and their relations with other areas},
   journal={J. Stoch. Anal.},
   volume={1},
   date={2020},
   number={4},
   pages={Art. 5, 21},
}

\bib{GLS}{article}{
   author={Gerhold, Malte},
   author={Lachs, Stephanie},
   author={Sch\"{u}rmann, Michael},
   title={Categorial independence and L\'{e}vy processes},
   journal={SIGMA Symmetry Integrability Geom. Methods Appl.},
   volume={18},
   date={2022},
   pages={Paper No. 075},
}

\bib{Gurevich}{article}{
   author={Gurevich, Maxim},
   title={Subproduct systems over $\Bbb N\times\Bbb N$},
   journal={J. Funct. Anal.},
   volume={262},
   date={2012},
   number={10},
   pages={4270--4301},
}

\bib{Haag}{book}{
   author={Haag, Rudolf},
   title={Local quantum physics},
   series={Texts and Monographs in Physics},
   note={Fields, particles, algebras},
   publisher={Springer-Verlag, Berlin},
   date={1992},
   pages={xiv+356},
}

\bib{Liebscher}{article}{
   author={Liebscher, Volkmar},
   title={Random sets and invariants for (type II) continuous tensor product
   systems of Hilbert spaces},
   journal={Mem. Amer. Math. Soc.},
   volume={199},
   date={2009},
   number={930},
   pages={xiv+101},
   issn={0065-9266},
   isbn={978-0-8218-4318-5},
}

\bib{Markiewicz}{article}{
   author={Markiewicz, Daniel},
   title={On the product system of a completely positive semigroup},
   journal={J. Funct. Anal.},
   volume={200},
   date={2003},
   number={1},
   pages={237--280},
}

\bib{Powers88}{article}{
   author={Powers, Robert T.},
   title={An index theory for semigroups of $^*$-endomorphisms of ${\scr
   B}({\scr H})$ and type ${\rm II}_1$ factors},
   journal={Canad. J. Math.},
   volume={40},
   date={1988},
   number={1},
   pages={86--114},
}

\bib{Pow99}{article}{
   author={Powers, Robert T.},
   title={New examples of continuous spatial semigroups of
   $\ast$-endomorphisms of $\germ B(\germ H)$},
   journal={Internat. J. Math.},
   volume={10},
   date={1999},
   number={2},
   pages={215--288},
   issn={0129-167X},
}
\bib{Pow03}{article}{
   author={Powers, Robert T.},
   title={Continuous spatial semigroups of completely positive maps of
  $\germ B(\germ h)$},
   journal={New York J. Math.},
   volume={9},
   date={2003},
   pages={165--269},
}

\bib{Sak}{book}{
   author={Sakai, Sh\^{o}ichir\^{o}},
   title={$C\sp*$-algebras and $W\sp*$-algebras},
   series={Ergebnisse der Mathematik und ihrer Grenzgebiete, Band 60},
   publisher={Springer-Verlag, New York-Heidelberg},
   date={1971},
   pages={xii+253},
}

\bib{Sch}{book}{
   author={Sch\"{u}rmann, Michael},
   title={White noise on bialgebras},
   series={Lecture Notes in Mathematics},
   volume={1544},
   publisher={Springer-Verlag, Berlin},
   date={1993},
   pages={vi+146},
}

\bib{Shalit-Solel}{article}{
   author={Shalit, Orr Moshe},
   author={Solel, Baruch},
   title={Subproduct systems},
   journal={Doc. Math.},
   volume={14},
   date={2009},
   pages={801--868},
}

\bib{Shalit-Skeide}{article}{
   author={Shalit, Orr Moshe},
   author={Skeide, Michael},
   title={CP-Semigroups and Dilations,
Subproduct Systems and Superproduct Systems:
The Multi-Parameter Case and Beyond},
   journal={ arXiv:2003.05166v3},
}

\bib{Ske}{article}{
   author={Skeide, Michael},
   title={Classification of $E_0$-semigroups by product systems},
   journal={Mem. Amer. Math. Soc.},
   volume={240},
   date={2016},
   number={1137},
   pages={vi+126},
}

\bib{Tak}{book}{
   author={Takesaki, M.},
   title={Theory of operator algebras. I},
   series={Encyclopaedia of Mathematical Sciences},
   volume={124},
   note={Reprint of the first (1979) edition;
   Operator Algebras and Non-commutative Geometry, 5},
   publisher={Springer-Verlag, Berlin},
   date={2002},
   pages={xx+415},
}

\bib{Tsi03}{article}{
   author={Tsirelson, Boris},
   title={Non-isomorphic product systems},
   conference={
      title={Advances in quantum dynamics},
      address={South Hadley, MA},
      date={2002},
   },
   book={
      series={Contemp. Math.},
      volume={335},
      publisher={Amer. Math. Soc., Providence, RI},
   },
   date={2003},
   pages={273--328},
}

\bib{Tsi04}{article}{
   author={Tsirelson, Boris},
   title={Nonclassical stochastic flows and continuous products},
   journal={Probab. Surv.},
   volume={1},
   date={2004},
   pages={173--298},
   issn={1549-5787},
}

\bib{Tsi09a}{article}{
   author={Tsirelson, Boris},
   title={Graded algebras and subproduct systems: dimension two},
   journal={arXiv:0905.4418},
  date={2009},
    }
   
   \bib{Tsi09b}{article}{
   author={Tsirelson, Boris},
   title={Subproduct systems of Hilbert spaces: dimension two},
   journal={arXiv:0906.4255},
   date={2009},
   }

\bib{Woro}{article}{
   author={Woronowicz, S. L.},
   title={Compact quantum groups},
   conference={
      title={Sym\'{e}tries quantiques},
      address={Les Houches},
      date={1995},
  },
   book={
      publisher={North-Holland, Amsterdam},
   },
   date={1998},
   pages={845--884},
}

\end{biblist}
\end{bibdiv}

\end{document}